\newtheorem{theorem}{Theorem}
\newtheorem{corollary}{Corollary}
\newtheorem{proposition}{Proposition}
\theoremstyle{definition}
\newtheorem{definition}{Definition}
\theoremstyle{remark}
\newtheorem{remark}{Remark}
\newtheorem{example}{Example}
\newcommand{\Z}{{\mathbb Z}}
\newcommand{\R}{{\mathbb R}}
\newcommand{\V}{{\mathcal V}}
\newcommand{\K}{{\mathcal K}}
\date{}
\title{Intersection formulas for parities on virtual knots}
\author{Igor Nikonov}
\begin{document}

\maketitle

\begin{abstract}
We prove that parities on virtual knots come from invariant 1-cycles on the arcs of knot diagrams. In turn, the invariant cycles are determined by quasi-indices on the crossings of the diagrams. The found connection between the parities and the (quasi)-indices allows to define a new series of parities on virtual knots.
\end{abstract}

\section*{Introduction}

V.O. Manturov~\cite{M3} defined a parity as a rule to assign labels $0$ and $1$
to the crossings of knot diagrams in a way compatible with
Reidemeister moves. Capability to discriminate between odd and even
crossings allows to treat the crossings differently when
transforming knot diagrams or calculating their invariants.
The notion of parity allows to strengthen knot invariants, to prove minimality
theorem and to construct (counter)examples~\cite{IM, IMN1, M1,M2,M3,M4,M5,M6,M7}.

In~\cite{IMN} parities with coefficients in an arbitrary abelian group were defined, and in~\cite{N} oriented parity was introduced.

A well known parity-like invariant is the index polynomial. Starting Turaev's polynomial $u$~\cite{Turaev}, the index polynomial has appeared numerously in papers of A.~Henrich~\cite{Henrich}, Y.H.~Im, K.~Lee, S.Y.~Lee~\cite{ILL}, Z.~Cheng~\cite{Cheng}, H.~Dye~\cite{Dye}, L.H.~Kauffman~\cite{K2}, L.C.~Folwaczny~\cite{FK}, J.~Kim~\cite{Kim}, M.J.~Jeong~\cite{J}, H.~Gao, M.~Xu~\cite{GX}, N.~Petit~\cite{Petit}, K.~Kaur, M.~Prabhakar, A.~Vesnin~\cite{KPV} and others. The first axiomatization of index properties is due to Z. Cheng~\cite{Cheng2}. M.~Xu~\cite{Xu} found the general conditions for an index to define an invariant polynomial and defined~\emph{weak chord index}. See also~\cite{Cheng2,CFGMX}  for other index axiomatics and examples of chord indices.

The present paper establishes a new relation between the (oriented) parity and the (weak chord) index.

The content of the paper can be described by the following diagram.

\[
\xymatrix{
& & \txt{indices} \ar@{^{(}->}[d] \ar[dl]^-{\S\ref{sect:index_to_parity}}\\
\txt{oriented parities}\ar@/^1.5pc/[urr]^-{\S\ref{sect:derived_parity}} \ar@<0.3ex>[r]^-{\S\ref{subsect:parity_cycle}} & \txt{normalized invariant 1-cycles}\ar@<0.3ex>[l]\ar@<0.3ex>[r]^-{\S\ref{sect:quasiindex}} &\txt{quasi-indices}\ar@.@<0.6ex>[l]\\
& \txt{biquandle 1-cocycles}\ar[u]^-{\S\ref{subsect:cycle_biquandle}} &
}
\]

Using the technics of~\cite{N2}, in Section~\ref{sect:potentials} we define the parity cycle of an oriented parity. In fact, the parity cycle is some labeling of arcs of knot diagrams compatible with Reidemeister moves. The intersection formula~\eqref{eq:intersection_formula} establishes a bijection between parities and invariant cycles. In Section~\ref{subsect:cycle_biquandle} we show how parity cycles can be constructed from a biquandle 1-cocycle (up to some issue with colouring monodromy). In Section~\ref{sect:quasiindex} we return the labeling from arcs to crossings and prove that the parity cycle is determined by some quasi-index on the crossings of knot diagrams. Quasi-index generalizes the notion of index on crossings, and its nature is not clear yet. Section~\ref{sect:index_to_parity} is devoted to various formulas which restore a parity for a given (weak chord) index. Sections~\ref{sect:long_knots},\ref{sect:parity_functors} and~\ref{sect:links} describe how the intersection formula can be adapted to long virtual knots, parity functors and virtual links. In the last section~\ref{sect:derived_parity} we loop the intersection formula by considering parities as (signed) indices, and define derived parities.

\section{Definitions}

\subsection{Virtual knots}

We start with the definitions of virtual knots.

A \emph{$4$-graph} is any union of four-valent graphs and \emph{trivial components}, i.e. circles  considered as graphs without vertices and with one (closed) edge.
A \emph{virtual diagram} is an embedding of a $4$-graph into plane so that each vertex of the graph is marked as either \emph{classical} of \emph{virtual} vertex.
At a classical vertex one a pair of opposite edges (called \emph{overcrossing}) is chosen. The other pair of opposite edges at the vertex is called \emph{undercrossing}.

Virtual crossings of a virtual diagram  are usually drawn circled. The undercrossing of a classical vertex is drawn with a broken line whereas the overcrossing is drawn with a solid line (see Fig.~\ref{fig:virtual_trefoil}).
\begin{figure}[h]
\centering
\includegraphics[width=0.15\textwidth]{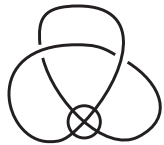}
\caption{A virtual trefoil diagram with two classical and one virtual crossings}\label{fig:virtual_trefoil}
\end{figure}
%

\emph{Moves of virtual diagrams} include classical Reidemeister moves ($R1$, $R2$, $R3$) and \emph{detour moves ($DM$)} that replace any diagram arc, which has only virtual crossings, with a new arc, which has the same ends and contains only virtual crossings (see Fig.~\ref{pic:virt_moves}). An equivalence class of virtual diagram modulo moves is called a \emph{virtual link}~\cite{K1}.

\begin {figure}[h]
\centering
\includegraphics[width=0.3\textwidth]{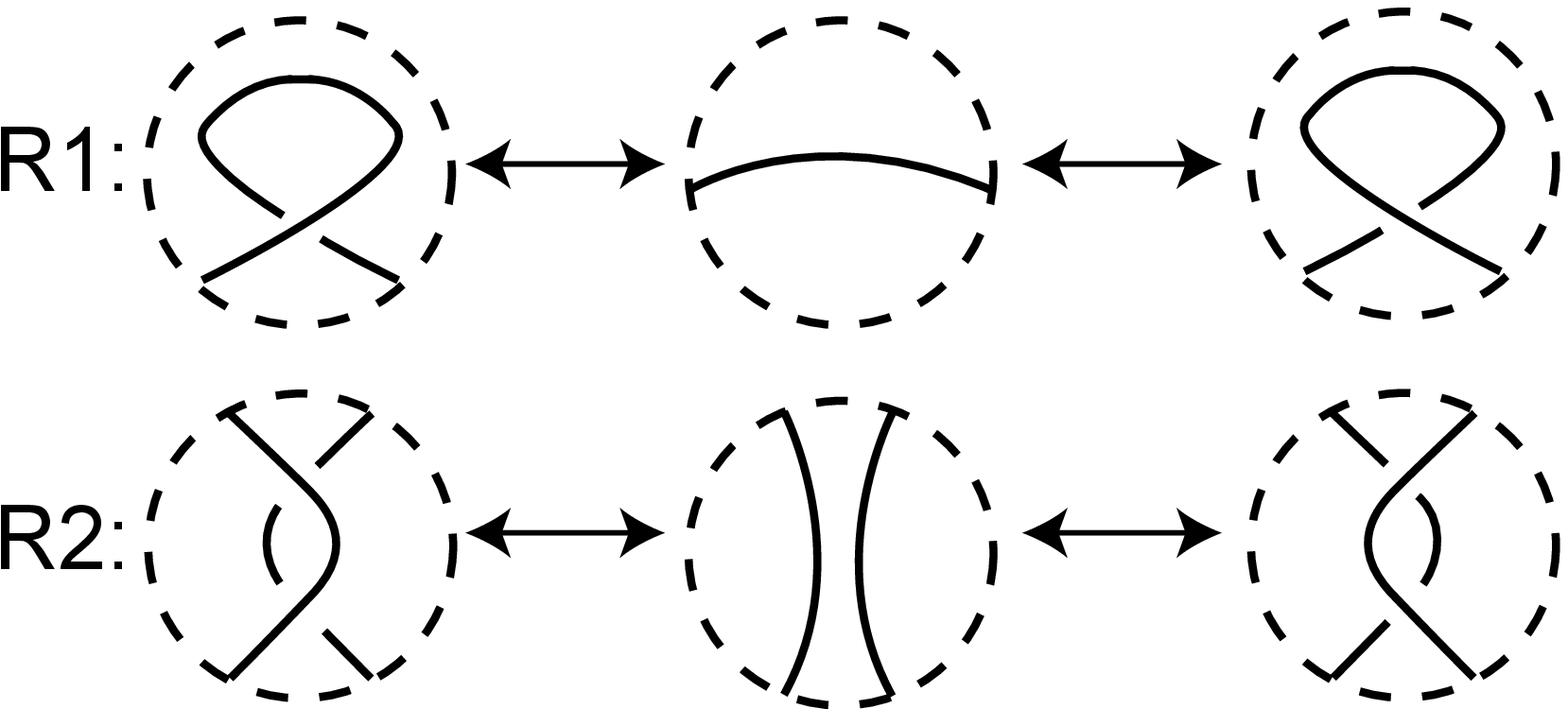}\qquad
\includegraphics[width=0.3\textwidth]{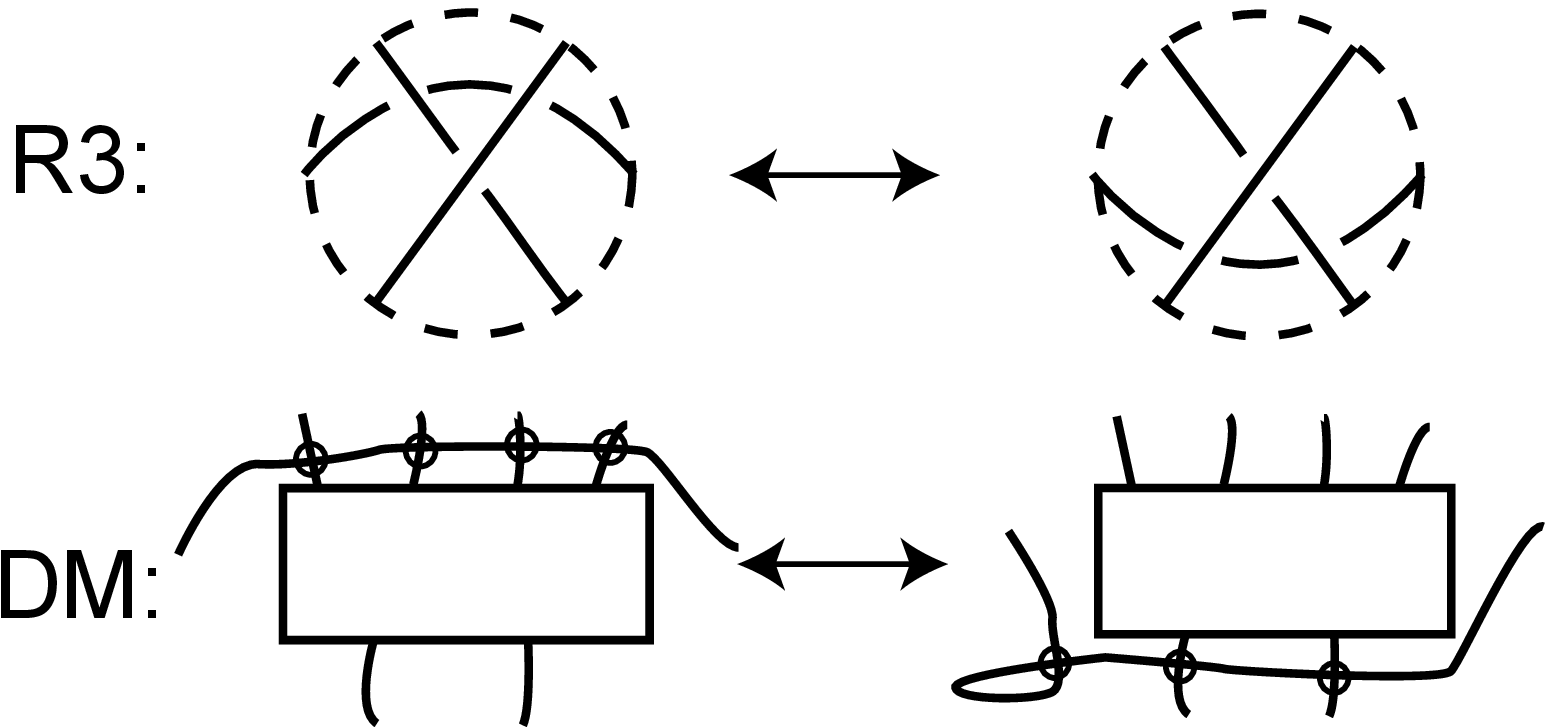}
\caption{Moves of virtual diagrams}\label{pic:virt_moves}
\end {figure}

A \emph{unicursal component} is a minimal set of diagram edges which is closed under passing from an edge to its opposite (at some end of the edge) edge. A \emph{ virtual knot} is an equivalence class of diagrams with one unicursal component.

On the other hand, virtual knots can be defined by means Gauss diagrams~\cite{GPV}. The \emph{Gauss diagram} $G=G(D)$ of a virtual knot diagram $D$ is a chord diagram whose chords correspond to the classical crossings of $D$, see Fig.~\ref{fig:virtual_gauss_diagram}. The chords carry an orientation (from over-crossing to under-crossing) and the sign of the crossings, see Fig.~\ref{fig:crossing_sign}.

\begin{figure}[h]
\centering\includegraphics[width=0.15\textwidth]{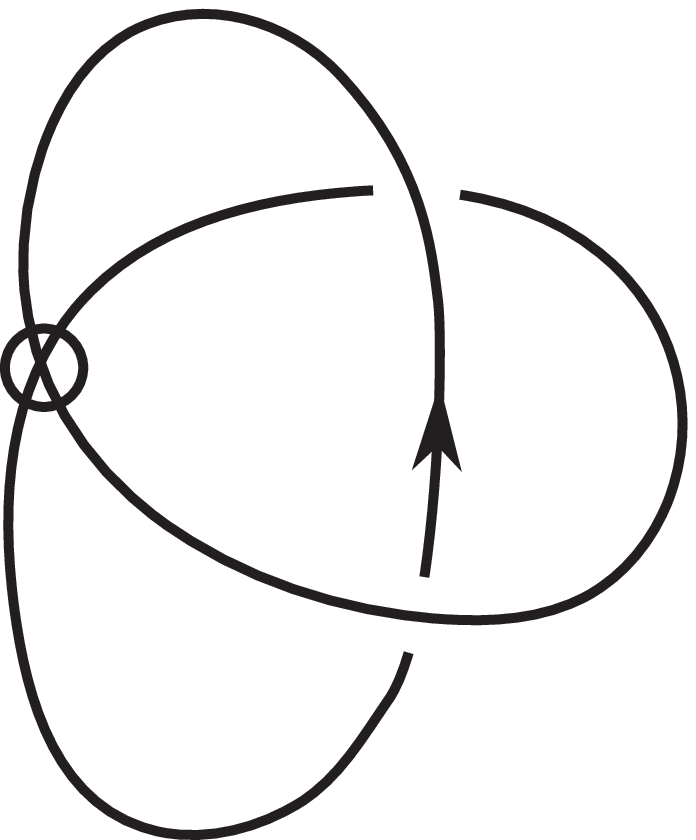}\qquad
\includegraphics[width=0.15\textwidth]{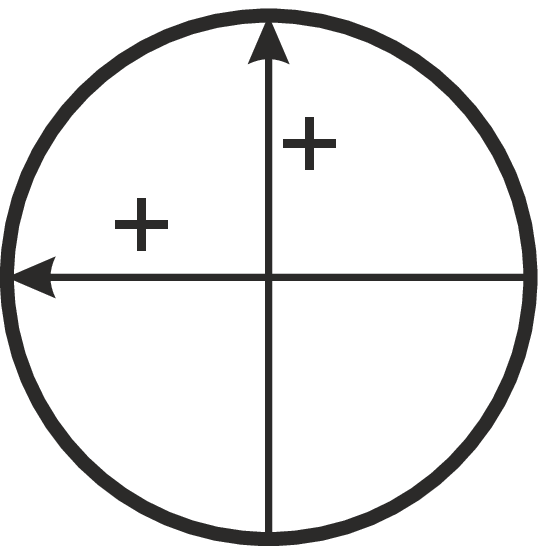}
\caption{The virtual trefoil and its Gauss diagram}\label{fig:virtual_gauss_diagram}
\end{figure}

\begin{figure}[h]
\centering\includegraphics[width=0.2\textwidth]{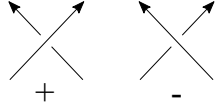}
\caption{The sign of a crossing}\label{fig:crossing_sign}
\end{figure}

Classical Reidemeister moves induce transformations of Gauss diagrams (see Fig.~\ref{fig:reidemeister_gauss}). Virtual knots are exactly the equivalence classes of Gauss diagrams modulo the induced Reidemeister moves.

\begin{figure}[h]
\centering\includegraphics[width=0.6\textwidth]{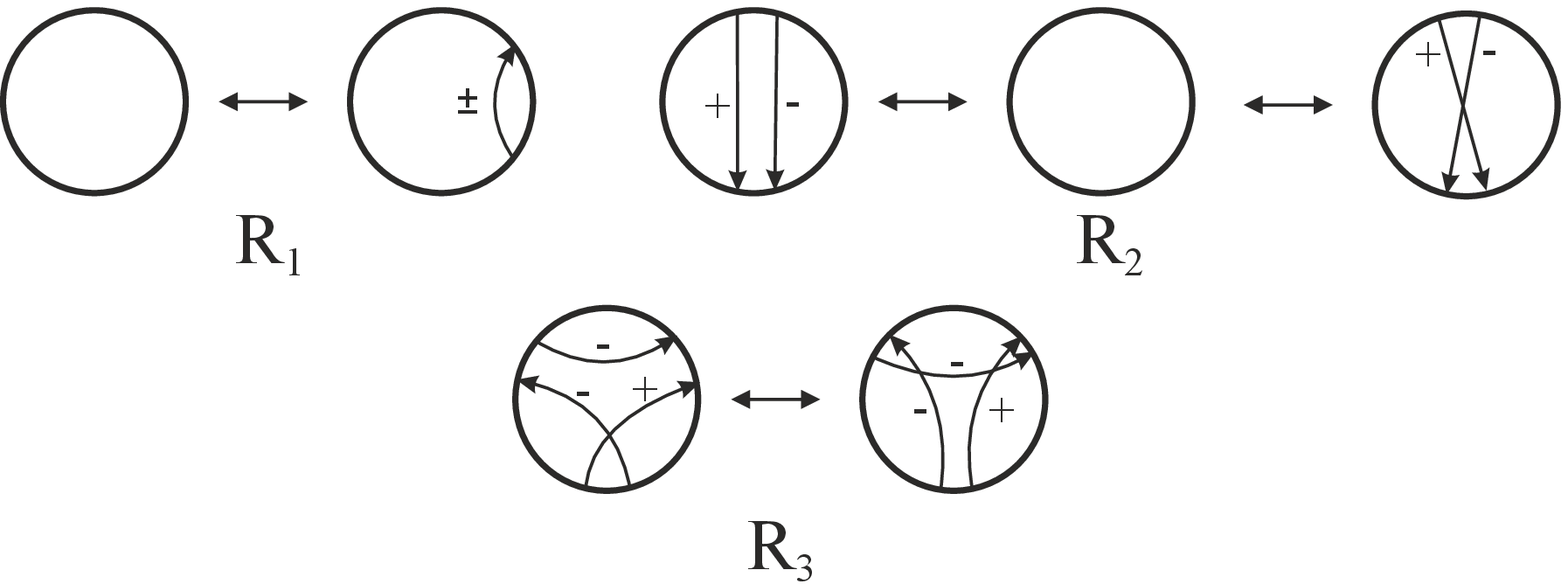}
\caption{Reidemeister moves on Gauss diagrams}\label{fig:reidemeister_gauss}
\end{figure}

Recall that an \emph{abstract knot diagram}~\cite{KK} can be constructed for a virtual knot diagram $D$ as follows. Consider a two-dimensional surface with boundary immersed in the neighbourhood of the diagram $D$ as shown in Fig.~\ref{fig:abstract_surface}. Then glue discs to the boundary of this surface. Denote the obtained closed surface by $AD(D)$.

\begin{figure}[h]
\centering\includegraphics[width=0.4\textwidth]{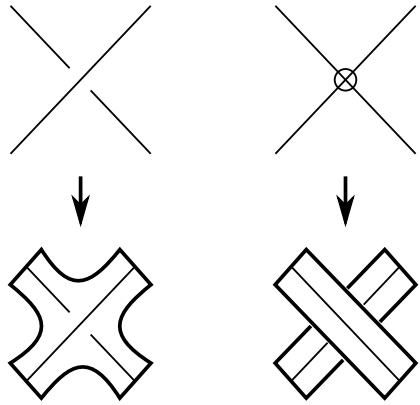}
\caption{Construction of the abstract knot diagram}\label{fig:abstract_surface}
\end{figure}

The virtual knot diagram $D$ lifts naturally to $AD(D)$ as its $1$-skeleton $\tilde D$. The graph $\tilde D$ can be considered as a knot diagram on the surface $AD(D)$ in with classical crossings. The pair $(AD(D),\tilde D)$ is the \emph{abstract knot diagram} of the virtual knot diagram $D$. We will call the graph $\tilde D$ the \emph{covering graph of the diagram $D$}.

The Gauss diagram $G(D)$ and the covering graph $\tilde D$ of a virtual knot diagram $D$ are connected by continuous maps (Fig.~\ref{fig:gauss_virtual_diagrams}). The projection $p_1\colon G(D)\to\tilde D$ contracts the chords of $G(D)$ to points, and the projection $p_2$ is an immersion of a neighbourhood of $\tilde D$ in the surface $AD(D)$ to the plane $\R^2$. The double points of the projection $p_2\colon\tilde D\to D$ corresponds to the virtual crossing of the diagram $D$.

\begin{figure}[h]
\centering\includegraphics[width=0.6\textwidth]{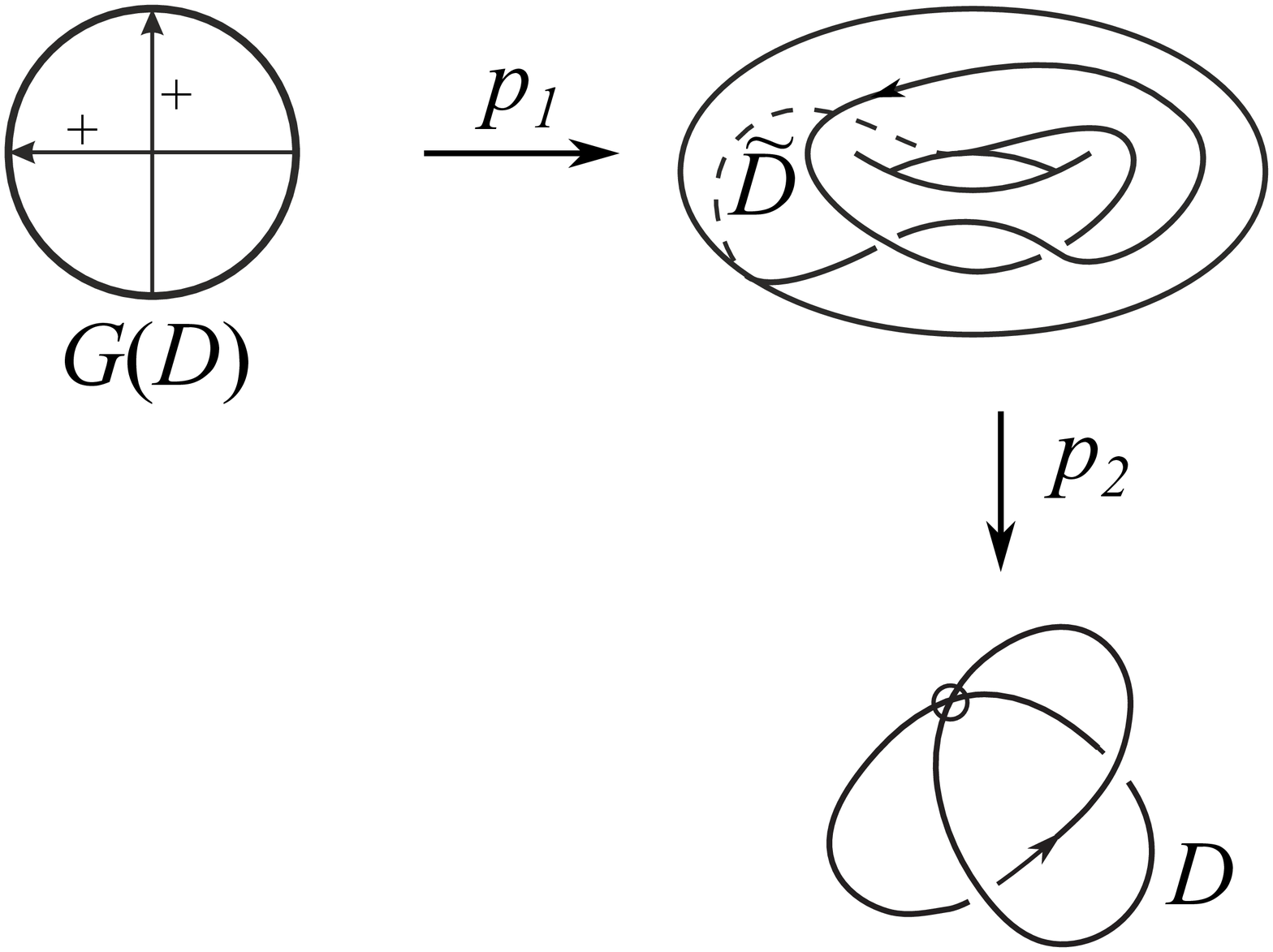}
\caption{A virtual diagram and its abstract knot diagram and Gauss diagram}\label{fig:gauss_virtual_diagrams}
\end{figure}

Since the surface $AD(D)$ is oriented, for any two cycles $\tilde C_1$, $\tilde C_2$ in the graph $\tilde D$ their intersection number $\tilde C_1\cdot\tilde C_2\in\Z$ is defined. Any two cycles $C_1,C_2$ in the diagram $D$ can be uniquely lifted to cycles $\tilde C_1,\tilde C_2$ in the covering graph $\tilde D$, so one can consider the intersection number $C_1\cdot C_2=\tilde C_1\cdot\tilde C_2$. The intersection number can be calculated in the diagram $D$ as the sum of signs of the intersection points of the cycles (Fig.~\ref{fig:intersection_sign}).

\begin{figure}[h]
\centering\includegraphics[width=0.4\textwidth]{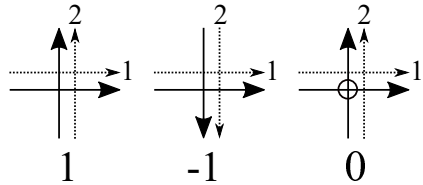}
\caption{Signs of intersection points of cycles in a virtual diagram}\label{fig:intersection_sign}
\end{figure}

\subsection{Parity on virtual knots}

Let $\mathcal K$ be an oriented virtual knot. Consider the set $\mathfrak K$ of the diagrams of the knot $K$. The set $\mathfrak K$ can be considered as the objects of a diagram category whose morphisms are compositions of isotopies and Reidemeister moves.

For any diagrams $D,D'\in\mathfrak K$ and a morphism $f\colon D\to D'$ between them, there is a correspondence $f_*\colon\V(D)\to\V(D')$ between the crossings of the diagrams. The map $f_*$ is a partial bijection between the crossing set because some crossing can disappear within the transformation $f$ and other crossing can appear by a first or second Reidemeister move.

Let $A$ be an abelian group. Let us recall the definition of oriented parity~\cite{N,N1}.

\begin{definition}\label{def:oriented_parity}
An \emph{oriented parity} $p$ is a family of maps $p_D\colon \V(D)\to G$ defined for each diagram $D$ of the knot $\mathcal K$, that possesses the following properties:
\begin{itemize}
\item[(P0)] for any morphism $f\colon D\to D'$ and any crossing $v\in\V(D)$ for which there exists the corresponding crossing $f_*(v)\in\V(D')$, one has $p_{D}(v)=p_{D'}(f_*(v))$;

\item[(P1)] if $f\colon D\to D'$ is a decreasing first Reidemeister move and $v\in\V(D)$ is the disappearing crossing then $p_D(v)=0$;

\item[(P2)] $p_D(v_1)+p_D(v_2)=0$ for any crossings $v_1,\,v_2$ to which a decreasing second Reidemeister move can be applied;

\item[(P3+)]
 if $f\colon D\to D'$ is a third Reidemeister move then
 \[
 \epsilon_\Delta(v_1)\cdot p_D(v_1)+\epsilon_\Delta(v_2)\cdot p_D(v_2)+\epsilon_\Delta(v_3)\cdot p_D(v_3)=0
 \]
  where $v_1,v_2,v_3$  are the crossings involved in the move and $\epsilon_\Delta(v_i)$ is the incidence index of the crossing $v_i$ to the disappearing triangle $\Delta$, see Fig.~\ref{pic:incidence_index}.

\begin {figure}[h]
\centering
\includegraphics[width=0.1\textwidth]{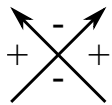}
\caption{Incidence indices}\label{pic:incidence_index}
\end {figure}
\end{itemize}
\end{definition}

\begin{remark}\label{rem:oriented_parity}
1. The properties (P1),(P2) and (P3+) can be unified in the following property: $\sum_i \epsilon_\Delta(v_i)\cdot p_D(v_i)=0$ if $f$ is a decreasing first or second Reidemeister move or a third Reidemeister move, $v_i$ are the crossings which take part in the move and
$\epsilon_\Delta(v_i)$ is the incidence index of the crossing to the disappearing region $\Delta$.

2. Oriented parities include parities with coefficients defined in~\cite{IMN} because those parities posses the property $2p(v)=0$ for any crossing $v$ so the incidence indices $\epsilon_\Delta(v)$ do not matter.
\end{remark}

Among examples of parities are the Gaussian parity, the link parity (with coefficients in $\Z_2$) of virtual knots, the index parity with coefficients in $\Z$ and the homological parity of knots in a given surface (with coefficients in the first homology group of the surface)~\cite{IMN1,N1}.

\begin{example}[Gaussian parity]
Let $D$ be a diagram of a knot $\mathcal K$ and $c\in\V(D)$ be a crossing of $D$. The crossing $c$ splits $D$ into two oriented halves (see Fig.~\ref{fig:knot_halves}). The Gaussian parity $gp_D(c)\in\Z_2$ of $c$ is the parity of the number of crossing points on any half.

\begin{figure}[h]
\centering\includegraphics[width=0.5\textwidth]{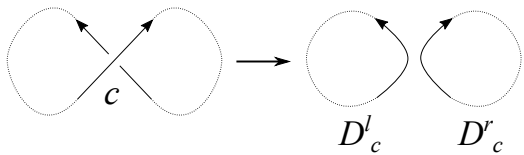}
\caption{The left and the right halves of the diagram}\label{fig:knot_halves}
\end{figure}

For example, a diagram of a virtual eight-knot in Fig.~\ref{fig:gaussian_parity_example} has two odd and one even crossings.

\begin{figure}[h]
\centering
\includegraphics[height=3cm]{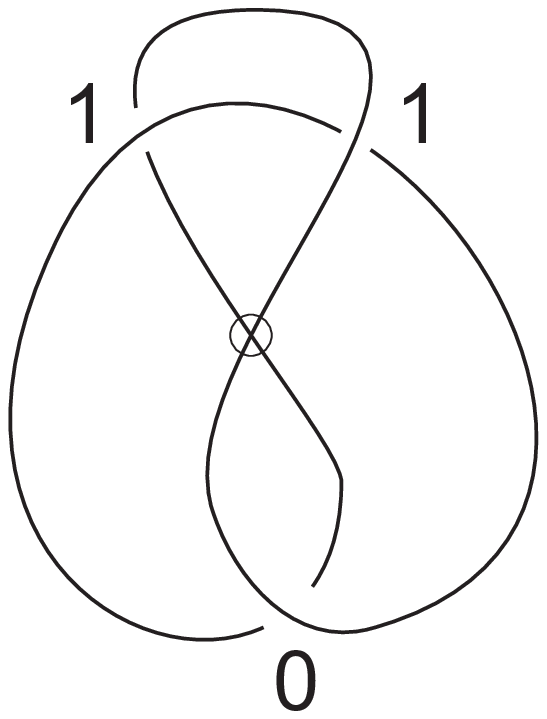}
\caption{Gaussian parity on a virtual knot diagram}\label{fig:gaussian_parity_example}
\end{figure}
\end{example}

\begin{example}[Link parity]\label{exa:link_parity}
Let $D$ be a diagram of some link $\mathcal L$ with two components. The crossings of the diagram $D$ can be divided into \emph{self-crossings} of a component of $\mathcal L$ and \emph{mixed crossings} where two different components intersect. One assigns the parity $0$ to the self-crossings and the parity $1$ to the mixed crossings.
\end{example}

\begin{example}[Index parity]
 Let $D$ be a diagram of an oriented virtual knot $\mathcal K$ and $v\in\V(D)$ be a crossing of $D$. Let us calculate the crossing points on the left half $D^l_v$ of the diagram at the crossing $v$ with the signs as in Fig.~\ref{fig:classical_crossing_sign}. The sum is the \emph{index parity} $ip_D(v)$ of the crossing $v$. Note that $ip_D(v)$ coincides with the intersection index of~\cite{Henrich} and differs by the sign of the crossing $v$ from the index $W_K(v)$ from~\cite{K2}: $ip_D(v)=sgn(v)\cdot W_K(v)$ (see also~\cite{Cheng}).

  \begin{figure}[h]
\centering\includegraphics[width=0.35\textwidth]{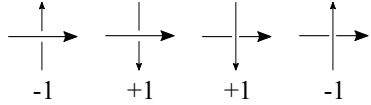}
\caption{Intersection number of a classical crossing}\label{fig:classical_crossing_sign}
\end{figure}

The Gaussian parity is the $\Z_2$ reduction of the index parity.

Since the index parity does not use under-overcrossing structure, this parity is defined for flat knots.
\end{example}

\begin{example}[Homological parity]
Let $\mathcal K$ be a knot in a fixed oriented surface $S$ and $D\subset S$ be its diagram. Any crossing $v\in\V(D)$ splits the diagram into left and right halves. This halves are cycles in the surface. The \emph{homological parity} of the crossing $v$ is the element $p^h_D(v)=[D^l_v]\in H_1(S,\Z)/[\mathcal K]$ where $[\mathcal K]$ is the homology class of the knot $\mathcal K$.
\end{example}

Let us remind basic properties of oriented parity~\cite{N1}.

 \begin{proposition}\label{prop:bigon_trigon_parity_property}
 Let $p$ be an oriented parity on diagrams of a knot $\mathcal K$ and $D$ be a diagram of $\mathcal K$.

 1. Assume that crossings $v_1,v_2\in\V(D)$ form a bigon (Fig.~\ref{pic:bigon_trigon} left). Then $p_D(v_1)+p_D(v_2)=0$;

 2. Assume that crossings $v_1,v_2,v_3\in\V(D)$ form a triangle $\Delta$ (Fig.~\ref{pic:bigon_trigon} right). Then $\epsilon_\Delta(v_1)\cdot p_D(v_1)+\epsilon_\Delta(v_2)\cdot p_D(v_2)+\epsilon_\Delta(v_3)\cdot p_D(v_3)=0$.

\begin {figure}[h]
\centering
\raisebox{-0.5\height}{\includegraphics[width=0.2\textwidth]{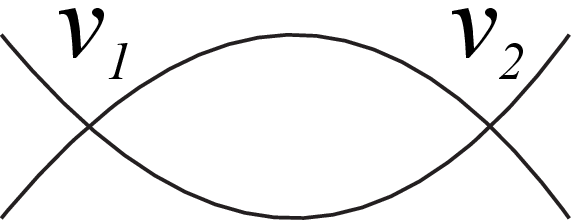}}\qquad
\raisebox{-0.5\height}{\includegraphics[width=0.15\textwidth]{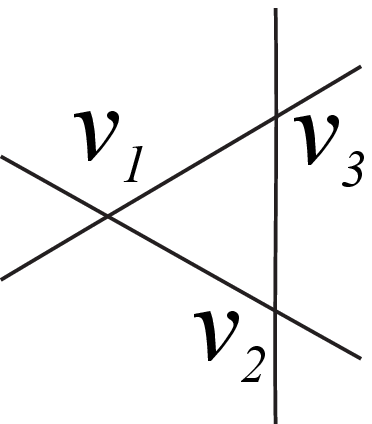}}
\caption{A bigon and a triangle. The under-overcrossing structure does not matter.}\label{pic:bigon_trigon}
\end {figure}
 \end{proposition}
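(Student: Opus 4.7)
My plan is to reduce both parts of the proposition to the axioms (P1)--(P3+) by introducing auxiliary Reidemeister moves that expose additional crossings whose parities can be combined linearly with those of the $v_i$. The point is that (P2) and (P3+) as stated are only triggered when the over/under data at the crossings allow an actual decreasing $R2$ or an actual $R3$ to be performed, whereas the proposition asserts that parity cares only about the combinatorics of the bigon or triangle. So the task is to transform an arbitrary bigon (resp. triangle) into an admissible one by local modifications, and then track the resulting relations.

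\emph{Part 1 (bigon).} If the bigon $\{v_1,v_2\}$ admits a decreasing second Reidemeister move, axiom (P2) gives the conclusion directly. Otherwise both crossings have the same strand passing over, so no $R2$ is applicable. I would introduce a short parallel arc by an increasing $R2$ near one strand of the bigon, creating auxiliary crossings $w_1,w_2$, and then perform a third Reidemeister move that pushes this arc across $v_1$. Its disappearing triangle has vertices $v_1,w_1,w_2$, and (P3+) gives a first relation among $p_D(v_1), p_D(w_1), p_D(w_2)$. After the $R3$, for a suitable choice of push the pair $\{v_2,w_i\}$ bounds an alternating bigon on which (P2) applies, and the remaining auxiliary crossing is cancelled by an $R1$ or a further reducing $R2$, governed by (P1) or (P2). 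Summing these linear identities, and using (P0) to identify parities across the intermediate diagrams, isolates $p_D(v_1)+p_D(v_2)=0$.

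\emph{Part 2 (triangle).} The strategy is analogous. If $\Delta$ is the disappearing triangle of an actual $R3$ move, (P3+) is the statement. If not, the over/under data at some vertex is ``wrong'' for $R3$, and I would introduce a pair of auxiliary crossings by an $R2$ move on one edge of $\Delta$ so that an actual $R3$ can be performed on a nearby triangle $\Delta'$ sharing two vertices with $\Delta$. Applying (P3+) to $\Delta'$ and combining with the bigon relations established in Part 1 (for the bigons formed by each $v_i$ with the auxiliary pair) produces $\epsilon_\Delta(v_1)p_D(v_1)+\epsilon_\Delta(v_2)p_D(v_2)+\epsilon_\Delta(v_3)p_D(v_3)=0$.

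The hard part is the sign bookkeeping with the incidence indices $\epsilon_\Delta(v_i)$, since after auxiliary moves the new disappearing region $\Delta'$ has its own incidence data which must be related to that of $\Delta$, and the signs depend on where $\Delta$ lies relative to the orientations at each crossing. However, the ``bad'' over/under configurations form a short finite list of combinatorial types up to symmetry and orientation reversal, so it suffices to verify the sign cancellation case by case, and in each case the auxiliary identities combine to exactly the claimed sum.
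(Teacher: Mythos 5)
Your Part 2 outline is essentially the paper's argument: when $R3$ does not apply to $\Delta$, perform an increasing $R2$ on one edge to create a pair $w,w'$, apply (P3+) to the nearby triangle on $\{w,v_2,v_3\}$, and trade $p_D(v_1)$ for $p_D(w)$ using the bigon relation $p(v_1)+p(w')=0$ together with $p(w)+p(w')=0$ and $\epsilon(v_1)=\epsilon(w)$. That part is fine in outline, but it rests entirely on Part 1, and your Part 1 has a genuine gap. First, the case split is stated backwards: a bigon in which the \emph{same} strand passes over at both crossings is exactly the one to which a decreasing $R2$ applies; the problematic case is the alternating bigon. Second, and more seriously, the configuration you propose does not exist: a single increasing $R2$ produces two crossings $w_1,w_2$ of the \emph{same} pair of arcs, whereas the two auxiliary vertices of an $R3$ triangle at $v_1$ must be crossings of a third arc with \emph{each} of the two arcs meeting at $v_1$. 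So the disappearing triangle $\{v_1,w_1,w_2\}$ with $\{w_1,w_2\}$ an $R2$-pair cannot be realized. Third, your appeal to ``(P2) on the alternating bigon $\{v_2,w_i\}$'' is circular: (P2) is only available when a decreasing $R2$ can actually be performed, i.e.\ for non-alternating bigons, and the alternating bigon is precisely the case being proved. Finally, even granting all the relations you list, the arithmetic does not close: from $p(w_1)+p(w_2)=0$ and $\epsilon(v_1)p(v_1)+\epsilon(w_1)p(w_1)+\epsilon(w_2)p(w_2)=0$ one gets either $p(v_1)=0$ or $\epsilon(v_1)p(v_1)\pm 2p(w_1)=0$, neither of which combines with $p(w_i)=-p(v_2)$ to give $p(v_1)+p(v_2)=0$.

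The missing idea is the one the paper uses: for an alternating bigon, apply an increasing \emph{first} Reidemeister move to produce a kink crossing $w$ with $p_{D'}(w)=0$ by (P1), positioned so that $v_1$, $v_2$ and $w$ \emph{together} form the disappearing triangle of a third Reidemeister move with $\epsilon(v_1)=\epsilon(v_2)$. Then (P3+) reads $\epsilon(v_1)p_{D'}(v_1)+\epsilon(v_2)p_{D'}(v_2)+\epsilon(w)p_{D'}(w)=0$, and the vanishing of $p_{D'}(w)$ immediately yields $p(v_1)+p(v_2)=0$. The crucial structural difference from your attempt is that the auxiliary triangle must contain \emph{both} original crossings of the bigon, with the third vertex being a crossing whose parity is pinned to $0$ by (P1); an $R2$-created pair only ever yields relations between the two new crossings and can never break the circularity on its own.
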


\begin{proof}
1. We need only to prove the statement for alternating bigons. Apply a first Reidemeister move $f$ as shown in Fig.~\ref{fig:alt_bigon}.  By the properties  (P1) and (P3+) we have $$\epsilon(v_1)p_{D'}(v_1)+\epsilon(v_2)p_{D'}(v_2)+\epsilon(w)p_{D'}=0$$
 and $p_{D'}(w)=0$. Hence, $\epsilon(v_1)p_{D'}(v_1)+\epsilon(v_2)p_{D'}(v_2)=0$. Since $\epsilon(v_1)=\epsilon(v_2)$, we have $p_{D'}(v_1)+p_{D'}(v_2)=0$ and $p_{D}(v_1)+p_{D}(v_2)=0$.

\begin {figure}[h]
\centering
\includegraphics[width=0.4\textwidth]{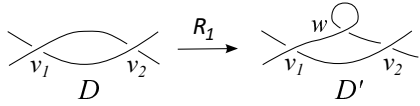}
\caption{Proof for an alternating bigon}\label{fig:alt_bigon}
\end {figure}

2. Let $v_1,v_2,v_3$ be the crossings of a triangle in the diagram $D$. If a third Reidemeister move can be applied to the triangle, the equality follows from the property (P3+).

\begin {figure}[h]
\centering
\includegraphics[width=0.5\textwidth]{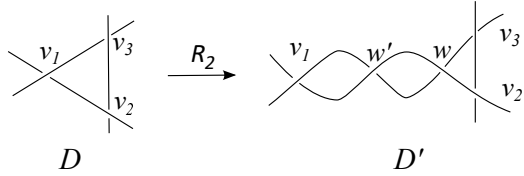}
\caption{Proof for an alternating triangle}\label{fig:alt_trigon}
\end {figure}

If $v_1,v_2,v_3$ form an alternating triangle, apply a second Reidemeister move $f\colon D\to D'$ as shown in Fig.~\ref{fig:alt_trigon}. Then $\epsilon(v_1)=\epsilon(w)$. By the property (P2) and the previous statement, $p_{D'}(v_1)+p_{D'}(w')=p_{D'}(w)+p_{D'}(w')=1$, hence $p_{D'}(v_1)=p_{D'}(w)$. By the property (P3+) the equality, we have
\[
\epsilon(v_2)p_{D'}(v_2)+\epsilon(v_3)p_{D'}(v_3)+\epsilon(w)p_{D'}(w)=0.
\]
Thus, $\epsilon(v_1)p_{D'}(v_1)+\epsilon(v_2)p_{D'}(v_2)+\epsilon(v_3)p_{D'}(v_3)=0$ and
\[
\epsilon(v_1)p_{D}(v_1)+\epsilon(v_2)p_{D}(v_2)+\epsilon(v_3)p_{D}(v_3)=0.
\]
\end{proof}


\section{Potentials and parity cycle}\label{sect:potentials}

Let $p$ be a parity with coefficients in an abelian group $A$ on diagrams of an oriented virtual knot $\mathcal K$. Let $\mathfrak K$ be the category of diagrams of the knot $\mathcal K$.

Let $D$ be a virtual diagram of $\mathcal K$. An \emph{arc} of the diagram $D$ is a part of $D$ which starts end ends at a classical crossing and contains no classical crossings inside. Denote the set of arcs of the diagram $D$ by $\mathcal A(D)$. The set $\mathcal A(D)$ can be identified with the set of edges of $1$-skeleton $\tilde D$ on the abstract knot diagram $AD(D)$ which corresponds to $D$, or with the set of edges of the core circle of the Gauss diagram $G(D)$.

An isotopy or a detour move $f\colon D\to D'$ induces a bijection $f_*\colon\mathcal A(D)\to\mathcal A(D')$. A Reidemeister move $f\colon D\to D'$ induces a correspondence between the arcs: an arc $a'\in\mathcal A(D')$ corresponds to an arc $a\in\mathcal A(D)$ if $a$ and $a'$ have a common point outside the area where the move occurs. For example, for an increasing first Reidemeister move, some arc $a\in\mathcal A(D)$ corresponds to two arcs $a',a''\in\mathcal A(D')$ adjacent to the loop (Fig.~\ref{fig:correspondent_arcs}). An increasing second Reidemeister move splits some two arcs $a$ and $b$ into arcs $a'$ and $a''$ correspondent to $a$ and arcs $b'$ and $b''$ correspondent to $b$. The small arcs $c,d$ participating in Reidemeister moves have no correspondent arcs.

\begin{figure}[h]
\centering
\includegraphics[width=0.4\textwidth]{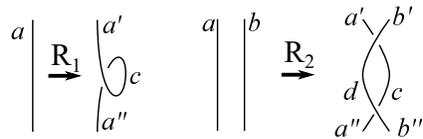}
\caption{Correspondent arcs}\label{fig:correspondent_arcs}
\end{figure}

\subsection{Potentials}

\begin{definition}
Let $a,b\in\mathcal A(D)$ be two arcs of the diagram $D$. The \emph{potential $\delta_{a,b}$ between the arcs $a$ and $b$} is defined as the parity $p_{D'}(v)\in A$ of the crossing $v$ produced by the second Reidemeister move $D\to D'$ applied to $a$ and $b$ (Fig.~\ref{fig:potentials}). By Proposition~\ref{prop:bigon_trigon_parity_property}, the value $\delta_{a,b}$ does not depend on which arcs goes over in the Reidemeister move. Analogously, we define the potentials $\delta_{b,a},\delta_{a,\bar b},\delta_{\bar b,a},\delta_{\bar a,\bar b},\delta_{\bar b,\bar a}$ (Fig.~\ref{fig:potentials}).

\begin{figure}[h]
\centering
\includegraphics[width=0.8\textwidth]{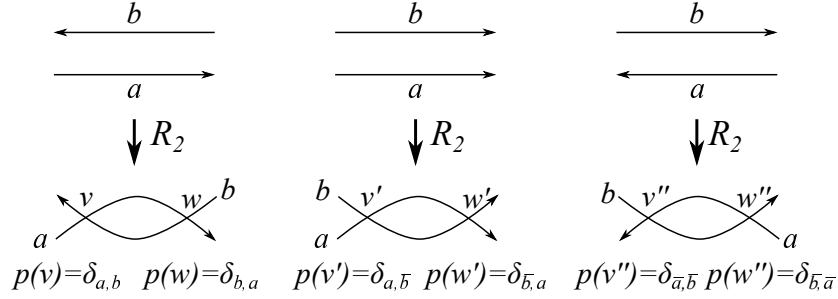}
\caption{Potentials between arcs}\label{fig:potentials}
\end{figure}

For any arc $a\in\mathcal A(D)$ we denote $\delta_a=\delta_{a,\bar a}$ (Fig.~\ref{fig:potential_delta}).

\begin{figure}[h]
\centering
\includegraphics[width=0.5\textwidth]{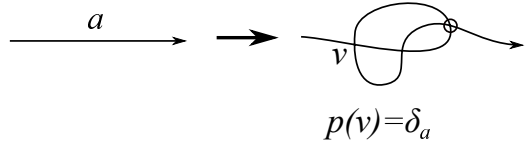}
\caption{The potential $\delta_a$}\label{fig:potential_delta}
\end{figure}
\end{definition}

\begin{proposition}\label{prop:potential_properties}
\begin{enumerate}
\item For any arcs $a,b\in\mathcal A(D)$ we have
\[
\delta_{b,a}=-\delta_{a,b},\qquad \delta_{\bar b,a}=-\delta_{a,\bar b}, \qquad
\delta_{\bar b,\bar a}=-\delta_{\bar a,\bar b}
\]

\item For any arcs $a,b,c\in\mathcal A(D)$ we have
\begin{gather*}
\delta_{a,b}+\delta_{b,c}+\delta_{c,a}=0,\qquad
\delta_{\bar a,b}-\delta_{b,c}+\delta_{c,\bar a}=0,\\
\delta_{a,\bar b}-\delta_{\bar b,\bar c}+\delta_{\bar c,a}=0,\qquad
\delta_{\bar a,\bar b}+\delta_{\bar b,\bar c}+\delta_{\bar c,\bar a}=0.
\end{gather*}
\end{enumerate}
\end{proposition}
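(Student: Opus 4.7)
The strategy is to realize each stated sum of potentials as a bigon or trigon identity for crossings of an auxiliary diagram, and invoke Proposition~\ref{prop:bigon_trigon_parity_property}.

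For Part~1, I would perform, for each of the three identities, a single increasing second Reidemeister move to arcs $a$ and $b$ with the pair of orientations appropriate to the identity (parallel; antiparallel; both reversed). The two resulting crossings form a bigon and, by the definition of the potentials (disambiguated as in Figure~\ref{fig:potentials}), their parities are precisely the two terms on the left-hand side of the identity, e.g.\ $\delta_{a,b}$ and $\delta_{b,a}$ in the first case. The bigon part of Proposition~\ref{prop:bigon_trigon_parity_property} immediately gives $\delta_{a,b}+\delta_{b,a}=0$, and the remaining two cases are identical.

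For Part~2, given three arcs $a,b,c\in\mathcal A(D)$, I would first use detour moves to bring segments of them into a small disc, then apply three successive R2 moves to create bigons between the pairs $(a,b),(b,c),(c,a)$, and finally apply an R3 move (or a local isotopy) so that one crossing from each new bigon becomes a vertex of a single small triangle $\Delta$. By construction, and by property (P0) together with Part~1 to select the correct crossing of each bigon, the parities of the three vertices are precisely the three potentials $\delta_{a,b},\delta_{b,c},\delta_{c,a}$ when $a,b,c$ circulate consistently around $\Delta$. The trigon part of Proposition~\ref{prop:bigon_trigon_parity_property} then yields
\[
\epsilon_\Delta(v_{ab})\,\delta_{a,b}+\epsilon_\Delta(v_{bc})\,\delta_{b,c}+\epsilon_\Delta(v_{ca})\,\delta_{c,a}=0.
\]
For the consistently oriented triangle the three incidence indices are equal, producing the first identity; reversing the orientation of a single arc changes $\epsilon_\Delta$ exactly at the two crossings adjacent to that arc, and one checks that the four orientation patterns $(a,b,c)$, $(\bar a,b,c)$, $(a,\bar b,\bar c)$, $(\bar a,\bar b,\bar c)$ yield, after clearing an overall sign, the four identities of the statement.

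The main delicate point is the orientation bookkeeping: one must verify that the auxiliary triangle can be arranged so that its three pairwise crossings match the oriented potentials appearing in each identity, and that the incidence indices $\epsilon_\Delta$ flip in the pattern producing the stated signs. This is a finite but careful case analysis in the conventions of Figures~\ref{fig:potentials} and~\ref{pic:incidence_index}.
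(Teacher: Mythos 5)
Your proposal is correct and follows essentially the same route as the paper: Part~1 is the definition of the potentials combined with (P2) (equivalently the bigon case of Proposition~\ref{prop:bigon_trigon_parity_property}), and Part~2 is obtained by applying three second Reidemeister moves so that one crossing from each resulting bigon forms a triangle whose vertices have parities $\delta_{a,b},\delta_{b,c},\delta_{c,a}$, after which the trigon relation with all incidence indices equal gives the first identity and the remaining orientation patterns give the other three. Your sign bookkeeping (reversing one arc flips $\epsilon_\Delta$ at the two adjacent vertices) matches the signs in the stated identities, so the argument is complete.
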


\begin{proof}
1. The first statement follows from the definition of potentials and property (P2).

2. Let us prove the identity $\delta_{a,b}+\delta_{b,c}+\delta_{c,a}=0$. Apply second Reidemeister moves to the arcs $a,b,c$ as shown in Fig.~\ref{fig:poten_add}. Denote the obtained diagram by $D'$. Then $p_{D'}(u)=\delta_{a,b}, p_{D'}(v)=\delta_{b,c}, p_{D'}(w)=\delta_{c,a}$. By property (P3+) we have $\epsilon(u)p_{D'}(u)+\epsilon(v)p_{D'}(v)+\epsilon(w)p_{D'}(w)=0$. Since $\epsilon(u)=\epsilon(v)=\epsilon(w)=+1$, we get the required equality.

\begin{figure}[h]
\centering\includegraphics[width=0.5\textwidth]{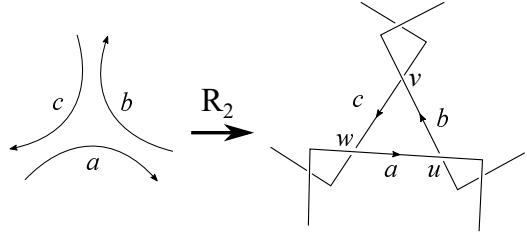}
\caption{Proof of $\delta_{a,b}+\delta_{b,c}+\delta_{c,a}=0$}\label{fig:poten_add}
\end{figure}
The other equalities can be proved analogously.
\end{proof}

\begin{corollary}\label{cor:potential_property}
For any arcs $a,b\in\mathcal A(D)$ we have $\delta_{a,\bar b}=\delta_{b,a}+\delta_b$, $\delta_{\bar a,b}=\delta_{b,a}-\delta_a$, $\delta_{\bar a,\bar b}=\delta_{a,b}+\delta_a-\delta_b$.
\end{corollary}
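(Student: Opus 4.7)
The three identities will follow from the four triangle identities of Proposition~\ref{prop:potential_properties}(2) by specializing the third arc $c$ to coincide with $a$. In each case the substitution converts one of the $R_2$-type potentials into a self-potential of the form $\delta_{x,\bar x}=\delta_x$ or $\delta_{\bar x,x}=-\delta_x$, corresponding geometrically to replacing one $R_2$ move at the triangle by an $R_1$ kink. I will handle the three formulas in the order ``second, first, third'' since they build on one another.

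For $\delta_{\bar a,b}=\delta_{b,a}-\delta_a$, I apply the second triangle identity $\delta_{\bar a,b}-\delta_{b,c}+\delta_{c,\bar a}=0$ with $c=a$. The last term becomes $\delta_{a,\bar a}=\delta_a$ by the definition of the self-potential, and the identity rearranges to the desired form. The first formula $\delta_{a,\bar b}=\delta_{b,a}+\delta_b$ then follows by swapping the labels $a\leftrightarrow b$ to get $\delta_{\bar b,a}=\delta_{a,b}-\delta_b$ and applying the antisymmetry $\delta_{\bar b,a}=-\delta_{a,\bar b}$ from Proposition~\ref{prop:potential_properties}(1).

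For the third formula $\delta_{\bar a,\bar b}=\delta_{a,b}+\delta_a-\delta_b$, I apply the third triangle identity with $c=a$ to get $\delta_{a,\bar b}-\delta_{\bar b,\bar a}+\delta_{\bar a,a}=0$. The self-potential $\delta_{\bar a,a}$ equals $-\delta_a$ by the antisymmetry relation $\delta_{\bar b,a}=-\delta_{a,\bar b}$ applied with $b=a$ (alternatively, by combining the second triangle identity at $b=c=a$ with the fact that $\delta_{a,a}=0$, which itself follows from the first triangle identity at $c=a$). Using $\delta_{\bar a,\bar b}=-\delta_{\bar b,\bar a}$ and substituting the formula for $\delta_{a,\bar b}$ already obtained gives $\delta_{\bar a,\bar b}=\delta_a-\delta_{a,\bar b}=\delta_a-\delta_{b,a}-\delta_b=\delta_{a,b}+\delta_a-\delta_b$.

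The main subtle point is the legitimacy of specializing the arcs so that two of them coincide in the identities of Proposition~\ref{prop:potential_properties}, which are originally stated for a triangle of three arcs. This is justified because the corresponding configurations remain realizable when the $R_2$ move on the coinciding pair is replaced by an $R_1$ kink, and the underlying (P1), (P2), (P3+) argument from the proof of Proposition~\ref{prop:potential_properties} carries over verbatim to this degenerate triangle.
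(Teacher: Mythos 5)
Your proof is correct and follows the route the paper intends: the corollary is stated without proof as a direct consequence of Proposition~\ref{prop:potential_properties}, obtained precisely by specializing $c=a$ in the triangle identities and using the antisymmetry relations together with $\delta_{a,\bar a}=\delta_a$ (and hence $\delta_{\bar a,a}=-\delta_a$), exactly as you do. Your closing remark on the legitimacy of the degenerate specialization is consistent with the paper's own usage, since it likewise treats self-potentials such as $\delta_{a,\bar a}$ and $\delta_{a,a}=0$ as valid instances of these identities (e.g.\ in the proof of Corollary~\ref{cor:parity_delta_potential}).
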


\begin{proposition}\label{prop:parity_potential}
Let $a,b,c,d$ be the arcs incident to a crossing $v$ of the diagram $D$ as shown in Fig.~\ref{fig:potential_parity}. Then $p_D(v)=\delta_{a,d}=\delta_{b,\bar a}=\delta_{\bar c,\bar b}=\delta_{\bar d, c}$.

\begin{figure}[h]
\centering\includegraphics[width=0.12\textwidth]{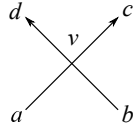}
\caption{A crossing}\label{fig:potential_parity}
\end{figure}
\end{proposition}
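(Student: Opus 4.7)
The plan is to prove all four identities by a uniform argument: perform a second Reidemeister move in one of the four sectors at $v$ and then invoke the bigon property of Proposition~\ref{prop:bigon_trigon_parity_property}(1) together with (P2). I treat the first identity $p_D(v)=\delta_{a,d}$ in detail.

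In the sector at $v$ bounded by $a$ and $d$, apply an R2 move that pushes a short segment of one of these arcs across the other; let $D'$ denote the resulting diagram and $u_1,u_2$ the two new crossings. The geometric picture is that $u_1$ sits close to $v$ and $u_2$ a bit farther, so that $D'$ contains two bigons: the ``outer'' bigon $\{u_1,u_2\}$ produced by the R2 move itself, and a ``small'' bigon $\{v,u_1\}$ whose two sides are the very short segments of the modified $a$ and $d$ joining $v$ to $u_1$. Proposition~\ref{prop:bigon_trigon_parity_property}(1) applied to $\{v,u_1\}$ gives $p_{D'}(v)+p_{D'}(u_1)=0$, and property (P2) applied to $\{u_1,u_2\}$ gives $p_{D'}(u_1)+p_{D'}(u_2)=0$. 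Subtracting yields $p_{D'}(v)=p_{D'}(u_2)$, and by the definition of potential the right-hand side is $\delta_{a,d}$. Since $v$ is preserved by the morphism $D\to D'$, property (P0) gives $p_D(v)=p_{D'}(v)=\delta_{a,d}$.

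The remaining three identities are proved by exactly the same R2-plus-bigon argument applied in the other three sectors at $v$. The bars on the arcs encode the orientations with which the pair of bounding arcs of each sector enters the R2 move: if the two arcs of a sector are anti-parallel at $v$ (one incoming, one outgoing), one of them must be reversed to set up the R2 configuration, and this reversal is recorded as a bar in the corresponding potential. Matching the four sectors of Fig.~\ref{fig:potential_parity} to the pairs $(a,d)$, $(b,\bar a)$, $(\bar c,\bar b)$, $(\bar d,c)$ gives the four asserted equalities. The main obstacle is not computational but geometric bookkeeping: for each sector I have to check that the R2 move genuinely creates a small bigon with $v$ (rather than, say, a triangle) and that the orientations in the R2 move match the bar pattern in the claimed potential; once this matching is read off the figure, the argument is uniform across all four cases.
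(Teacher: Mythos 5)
Your argument coincides with the paper's own proof: the paper applies a second Reidemeister move in the sector bounded by $a$ and $d$, obtaining crossings $w'$ (your $u_1$, forming a bigon with $v$) and $w$ (your $u_2$), and concludes $p_{D'}(v)=-p_{D'}(w')=p_{D'}(w)=\delta_{a,d}$, then handles the other three identities "analogously" exactly as you do. Your explicit appeal to Proposition~\ref{prop:bigon_trigon_parity_property}(1) for the small bigon $\{v,u_1\}$ is a slightly more careful version of the same step (the paper cites (P2) directly, implicitly choosing the over/under pattern of the R2 move so that the small bigon is non-alternating), so the proposal is correct and essentially identical to the paper's proof.
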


\begin{proof}
Apply a second Reidemeister move as shown in Fig.~\ref{fig:potential_parity_proof}. Let $D'$ be the obtained diagram. By property (P2) $p_{D'}(v)=-p_{D'}(w')=p_{D'}(w)$, and by definition $p_{D'}(w)=\delta_{a,d}$. Then $p_D(v)=p_{D'}(v)=\delta_{a,d}$. The other equalities are proved analogously.

\begin{figure}[h]
\centering\includegraphics[width=0.5\textwidth]{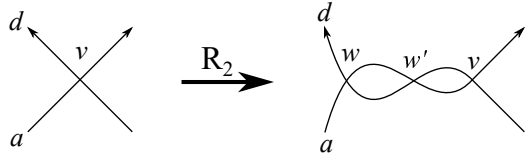}
\caption{Proof of the equality $p(v)=\delta_{a,d}$}\label{fig:potential_parity_proof}
\end{figure}
\end{proof}

\begin{corollary}\label{cor:parity_delta_potential}
Let $a,b,c,d$ be the arcs incident to a crossing $v$ of the diagram $D$ as shown in Fig.~\ref{fig:potential_parity}. Then $\delta_a+\delta_b=\delta_c+\delta_d$.
\end{corollary}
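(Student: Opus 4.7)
The plan is to combine Proposition \ref{prop:parity_potential} (which expresses $p_D(v)$ in four different ways as a potential across $v$) with the identities of Corollary \ref{cor:potential_property} and the triangle-type additivity of Proposition \ref{prop:potential_properties}(2), so as to compute a single potential $\delta_{b,d}$ in two different ways.

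First I would use the equality $p_D(v)=\delta_{a,d}=\delta_{b,\bar a}$ from Proposition \ref{prop:parity_potential}. Rewriting the right-hand side via Corollary \ref{cor:potential_property} gives $\delta_{b,\bar a}=\delta_{a,b}+\delta_a$, while the triangle identity $\delta_{a,b}+\delta_{b,d}+\delta_{d,a}=0$ (together with $\delta_{d,a}=-\delta_{a,d}$) yields $\delta_{a,d}=\delta_{a,b}+\delta_{b,d}$. Equating the two expressions for $\delta_{a,d}$ and cancelling $\delta_{a,b}$ produces $\delta_{b,d}=\delta_a$.

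Next I would do the analogous manipulation on the other half of Proposition \ref{prop:parity_potential}, namely $p_D(v)=\delta_{\bar c,\bar b}=\delta_{\bar d,c}$. Corollary \ref{cor:potential_property} rewrites these as $\delta_{\bar c,\bar b}=\delta_{c,b}+\delta_c-\delta_b$ and $\delta_{\bar d,c}=\delta_{c,d}-\delta_d$. Setting them equal gives $\delta_{c,b}-\delta_{c,d}=-\delta_c+\delta_b-\delta_d$; applying the triangle identity in the form $\delta_{c,b}-\delta_{c,d}=\delta_{c,b}+\delta_{d,c}=-\delta_{b,d}$ turns this into $\delta_{b,d}=\delta_c+\delta_d-\delta_b$.

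Finally, comparing the two computed values of $\delta_{b,d}$ gives $\delta_a=\delta_c+\delta_d-\delta_b$, i.e.\ $\delta_a+\delta_b=\delta_c+\delta_d$. No step looks like a genuine obstacle; the only thing to watch is picking, among the four expressions for $p_D(v)$ in Proposition \ref{prop:parity_potential}, the two pairs that, after applying Corollary \ref{cor:potential_property}, actually involve the same auxiliary potential (here $\delta_{b,d}$) so that the four $\delta$'s on arcs appear with matching coefficients.
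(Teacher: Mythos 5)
Your argument is correct and uses exactly the same ingredients as the paper: the four expressions for $p_D(v)$ from Proposition~\ref{prop:parity_potential}, rewritten via Corollary~\ref{cor:potential_property} and combined using the additivity identities of Proposition~\ref{prop:potential_properties}. The paper merely organizes the algebra differently, taking the alternating sum of all four expressions at once and telescoping $\delta_{a,d}+\delta_{d,c}+\delta_{c,b}+\delta_{b,a}=\delta_{a,a}=0$, where you pair them up through the auxiliary potential $\delta_{b,d}$ --- a cosmetic difference only.
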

\begin{proof}
By  Corollary~\ref{cor:potential_property} and Propositions~\ref{prop:parity_potential}
\[
p_D(v)=\delta_{a,d}=\delta_a-\delta_{b,a}=\delta_{c,b}+\delta_b-\delta_c=-\delta_d-\delta_{d, c}.
\]
Then
\begin{multline*}
0=\delta_{a,d}-(\delta_a-\delta_{b,a})+(\delta_{c,b}+\delta_c-\delta_b)-(-\delta_d-\delta_{d, c})=\\
(\delta_{a,d}+\delta_{d, c}+\delta_{c,b}+\delta_{b,a})+(\delta_c+\delta_d-\delta_a-\delta_b)=\delta_c+\delta_d-\delta_a-\delta_b
\end{multline*}
since $\delta_{a,d}+\delta_{d, c}+\delta_{c,b}+\delta_{b,a}=\delta_{a,a}=0$.
\end{proof}

\subsection{Parity cycle}\label{subsect:parity_cycle}

Consider the cell complex $C_*(\tilde D,A)$ of the covering graph $\tilde D$ of the diagram $D$. The 0-chain space $C_0(\tilde D,A)$  can be identified with the group $A\V(D)$ which consists of linear combinations of crossings of the diagram $D$ with coefficients in the group $A$. The group of 1-chains $C_1(\tilde D,A)$ is identified with the group $A\mathcal A(D)$ of linear combinations of arcs of $D$ with coefficients in $A$. Having in mind these identifications, we shall write below $C_i(D,A)$ instead of $C_i(\tilde D,A)$, $i=0,1$.

Note that $H_1(D,A)=H_1(\tilde D,A)$ is the set of 1-cycles in the diagram $D$ with values in the group $A$.

\begin{definition}\label{def:invariant_normalized_cycles}
Let $\gamma_D\in C_1(D,A)$, $D\in\mathfrak K$, be a family of 1-chains assigned to all diagrams $D$ of the virtual knot $\mathcal K$. The family of chains $\gamma_D$ is \emph{invariant} if for any morphism (an isotopy or a Reidemeister move) $f\colon D\to D'$ and any correspondent arcs $a\in\mathcal A(D)$ and $a'\in\mathcal A(D')$ their coefficients coincide: $\gamma_D(a)=\gamma_{D'}(a')$.

A cycle $\delta_D\in H_1(D,A)$ is \emph{normalized} if the intersection number $D\cdot\delta_D=0$.
\end{definition}

The next theorem shows that any parity determines an invariant normalized cycle.

\begin{definition}\label{def:parity_cycle}
Let $D$ be a diagram of the oriented virtual knot $\mathcal K$, and $\tilde D$ be the covering graph of the diagram $D$. For an oriented parity $p$ with coefficients in an abelian group $A$ we define a 1-chain $\delta^p_D$ as follows
\begin{equation}\label{eq:parity_cycle}
\delta_D^p=\sum_{a\in\mathcal A(D)} \delta_a\cdot a \in C_1(\tilde D,A).
\end{equation}

The 1-chain $\delta_D^p$ is called the \emph{parity cycle} of the oriented parity $p$ in the diagram $D$.
\end{definition}

\begin{theorem}\label{thm:parity_cycle}
Let $p$ be an oriented parity with coefficients in an abelian group $A$ on the diagrams of an oriented virtual knot $\mathcal K$, $D$ be a diagram of $\mathcal K$, and $\delta_D^p$ be the parity cycle. Then
\begin{enumerate}
\item $\delta_D^p$ is a genuine 1-cycle in the cell complex $C_*(\tilde D,A)$, i.e. $d\delta_D^p=0$;
\item $\delta^p_D$ is an invariant cycle;
\item $\delta^p_D$ is normalized;
\item (the intersection formula) for any crossing $v\in\V(D)$ we have
\begin{equation}\label{eq:intersection_formula}
p_D(v)=D^l_v\cdot\delta_D^p.
\end{equation}
\end{enumerate}
\end{theorem}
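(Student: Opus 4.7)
The plan is to establish parts (1) and (2) directly, then use them to prove (4), and finally deduce (3).

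Part (1) is immediate: at a crossing $v$ with incoming arcs $a,b$ and outgoing arcs $c,d$, the boundary coefficient of $d\delta^p_D$ at $v$ is $\delta_c+\delta_d-\delta_a-\delta_b$, which vanishes by Corollary~\ref{cor:parity_delta_potential}, so $\delta^p_D$ is a genuine $1$-cycle.

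For part (2), I need $\delta_a=\delta_{a'}$ whenever $a\in\mathcal A(D)$ and $a'\in\mathcal A(D')$ are arcs corresponding under a morphism $f\colon D\to D'$. The key observation is that $\delta_a=\delta_{a,\bar a}$ is the parity of a crossing produced by an auxiliary second Reidemeister move applied to $a$ and $\bar a$, and this auxiliary move can always be arranged in a disk disjoint from the support of $f$. Hence $f$ and the auxiliary R2 commute up to isotopy, so property (P0) identifies the resulting crossings in the two diagrams, forcing their parities to agree. A short case check over $R_1$, $R_2$, $R_3$, detour moves, and isotopies handles all morphisms.

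For part (4), Proposition~\ref{prop:parity_potential} gives $p_D(v)=\delta_{a,d}$ where $a$ is the left-incoming and $d$ is the left-outgoing arc at $v$. The plan is to apply an auxiliary R2 to the pair $(a,d)$, producing a diagram $D'$ with a new crossing $w$ satisfying $p_{D'}(w)=\delta_{a,d}=p_D(v)$. In $D'$ the left half $D^l_v$ acquires one new transverse intersection with $\delta^p_{D'}$ localized at $w$, whose signed contribution (via the convention of Fig.~\ref{fig:intersection_sign}) equals $p_D(v)$; the contributions at all other crossings reproduce those of $D^l_v\cdot\delta^p_D$ computed in $D$. Combined with the invariance from part (2), this yields $p_D(v)=D^l_v\cdot\delta^p_D$. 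Part (3) then follows from the decomposition $D=D^l_v+D^r_v$ together with a symmetric application of (4) to the right corner of $v$, giving $D^r_v\cdot\delta^p_D=-p_D(v)$ (the sign flip reflects the skew-symmetry of the intersection form on the oriented surface $AD(D)$ and the identity $p_D(v)=\delta_{\bar d,c}$ of Proposition~\ref{prop:parity_potential}); summing yields $D\cdot\delta^p_D=0$.

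The main obstacle is part (4): verifying that the auxiliary R2 contributes exactly one extra intersection at $w$ with the correct sign, and that the contributions at the other crossings genuinely match between $D$ and $D'$. This demands careful local bookkeeping near the R2 bump — tracking which strands of $w$ lie in the modified left half, how the potentials on the two new small arcs enter the cycle $\delta^p_{D'}$, and how the convention of Fig.~\ref{fig:intersection_sign} combines with Corollary~\ref{cor:potential_property} to deliver $p_D(v)$ rather than some other combination of potentials at $w$.
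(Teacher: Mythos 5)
Parts (1) and (2) of your proposal are correct and coincide with the paper's argument: (1) is exactly Corollary~\ref{cor:parity_delta_potential}, and (2) is the observation that the auxiliary second Reidemeister move defining $\delta_a$ commutes with the morphism $f$, so (P0) applies. The problem is part (4), which you yourself flag as ``the main obstacle'' and then do not resolve; worse, the strategy you sketch for it cannot work. If, as you claim, the auxiliary R2 on the pair $(a,d)$ adds to $D^l_v\cdot\delta^p_{D'}$ one new contribution equal to $p_D(v)$ while leaving all other contributions as in $D^l_v\cdot\delta^p_D$, then comparing with the homological invariance of the intersection number under the R2 move (which is exactly the mechanism used in Theorem~\ref{thm:parity_cycle_inverse} and Proposition~\ref{prop:signature_invariance}\,--\,type arguments) would force $p_D(v)=0$. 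In reality the bigon created by the R2 move contributes zero to the intersection number, so the auxiliary move tells you nothing: the quantity $D^l_v\cdot\delta^p_D$ is an irreducibly \emph{global} sum over all crossings met by the half, and no single auxiliary crossing can localize it.

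The missing idea is the telescoping argument via a potential function. Fix a base arc $o$ and set $\phi_x=\delta_{x,o}$, so that $\delta_{x,y}=\phi_x-\phi_y$ by Proposition~\ref{prop:potential_properties}. One then computes, at each crossing, that the increment of $\phi$ along a strand equals $\pm\delta$ of the transversal arc, i.e.\ exactly the signed contribution of that crossing to the intersection with $\delta^p_D$ after shifting the half to the left. Summing these increments along $D^l_v$ from its start $d$ to its end $a$ telescopes to $\phi_a-\phi_d=\delta_{a,d}=p_D(v)$ by Proposition~\ref{prop:parity_potential}, which is formula~\eqref{eq:intersection_formula}. This also repairs your part (3): your identity $D^r_v\cdot\delta^p_D=-p_D(v)$ is, given (4) and $D=D^l_v+D^r_v$, \emph{equivalent} to normalization, so deducing (3) from it is circular unless you prove it independently (and the justification via ``skew-symmetry'' does not do so). The correct route is to apply the same telescoping to the full loop $D$ based at a point of $o$, giving $D\cdot\delta^p_D=\phi_O-\phi_O=0$ directly.
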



\begin{proof}

1) The first statement of the theorem is a reformulation of Corollary~\ref{cor:parity_delta_potential}.

2) The invariance of $\delta_D^p$ is a consequence of the property (P0) of the parity $p$. Indeed, let $f\colon D\to D'$ be a morphism of diagrams and $a\in \mathcal A$ and $a'\in\mathcal A(D')$ be correspondent arcs . Apply second Reidemeister moves as shown in Fig.~\ref{fig:potential_delta} to the common part of the arcs. Then we get a new diagram $D_1$ from $D$ and a diagram $D'_1$ from $D'$, and crossings $v\in\V(D_1)$ and $v'\in\V(D_1')$ such that $\delta_a=p_{D_1}(v)$ and $\delta_{a'}=p_{D'_1}(v')$. The morphism $f$ induces a diagram morphism $f_1\colon D_1\to D'_1$ which identifies the crossings $v$ and $v'$. Hence, by the property (P0) we have $p_{D_1}(v)=p_{D'_1}(v')$ and $\delta_a=\delta_{a'}$.

3) Let us prove first the intersection formula. Let $O\in D\setminus\V(D)$ be a point of the diagram which is not a crossing, and $o\in\mathcal A(D), O\in o$, be the corresponding arc of the diagram. For any arc $a\in\mathcal A(D)$ denote $\phi_a=\delta_{a,o}$. By Proposition~\ref{prop:potential_properties}, for any $a,b\in\mathcal A(D)$ the equalities hold
\[
\delta_{a,b}=\phi_a-\phi_b,\quad \delta_{\bar a,b}=\phi_b-\phi_a-\delta_a,\quad \delta_{a,\bar b}=\phi_b-\phi_a+\delta_b,\quad \delta_{\bar a,\bar b}=\phi_a-\phi_b+\delta_a-\delta_b.
\]

Let $v$ be a crossing of the diagram $D$ (see Fig.~\ref{fig:potential_parity}). Then by Proposition~\ref{prop:parity_potential} we have
\[
p_D(v)=\delta_{a,d}=\delta_a-\delta_{b,a}=\delta_{c,b}+\delta_b-\delta_c=-\delta_d-\delta_{d, c}.
\]
Then
\[
\phi_c-\phi_a=\delta_{c,a}=\delta_{c,d}+\delta_{d,a}=-\delta_{d,c}-\delta_{a,d}=\delta_d
\]
and
\[
\phi_d-\phi_b=\delta_{d,b}=\delta_{d,a}+\delta_{a,b}=-\delta_{a,d}-\delta_{b,a}=-\delta_a.
\]

Thus, the potential $\phi$ changes by $\pm\delta$ when passing a crossing (Fig.~\ref{fig:change_potential}).

\begin{figure}[h]
\centering\includegraphics[width=0.5\textwidth]{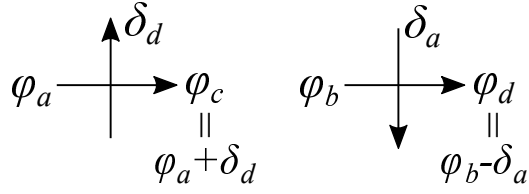}
\caption{Change of the potential $\phi$ by a crossing}\label{fig:change_potential}
\end{figure}

Now, fix a crossing $v$ (Fig.~\ref{fig:potential_parity}). The parity $p_D(v)$ is equal to $\phi_a-\phi_d$ by Proposition~\ref{prop:parity_potential}. The arcs $a$ and $d$ are the end and the start of the left half $D_v$. Hence, the difference between the potentials $\phi_a$ and $\phi_d$ is the sum
\[
\phi_a-\phi_d=\sum_{v'\in D^l_v}\Delta\phi(v')
\]
of the increments $\Delta\phi(v')$ of the potential at the crossing in the left half of the diagram.

On the other hand, the intersection number $D^l_v\cdot\delta_D^p$ is the sum of signs of the intersection points of the cycles multiplied by the values of the cycles. Shift the half $D^l_v$ to the left of $D$. Then the intersection points correspond to the crossings in the half $D^l_v$ (Fig.~\ref{fig:intersection_formula_proof}) and the contribution of the point corresponding to a crossing $v'\in D^l_v$ is equal to $\Delta\phi(v')$. Thus, $$p_D(v)=\phi_a-\phi_d=D^l_v\cdot\delta_D^p.$$

\begin{figure}[h]
\centering\includegraphics[width=0.6\textwidth]{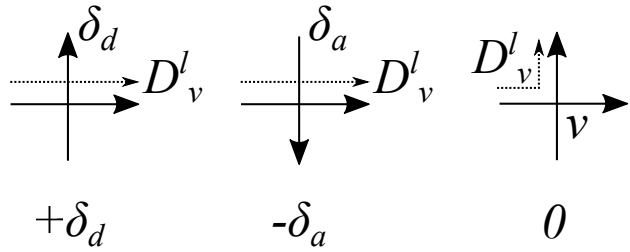}
\caption{Intersection points of $D^l_v$ and $D$}\label{fig:intersection_formula_proof}
\end{figure}

4) Consider $D$ as a route along the diagram which starts and ends at the point $O$. The reasonings above show that $D\cdot\delta_D^p=\phi_O-\phi_O=0$.
\end{proof}

Let us prove the inverse statement.

\begin{theorem}\label{thm:parity_cycle_inverse}
  Let $\delta_D\in H_1(D,A)$, $D\in\mathfrak K$, be an invariant normalized cycle with values in an abelian group $A$ on the diagrams of a virtual knot $\mathcal K$. Then the intersection formula
\[
p^\delta_D(v)=D^l_v\cdot\delta_D
\]
defines an oriented parity $p^\delta$ on diagrams of the knot $\mathcal K$.
\end{theorem}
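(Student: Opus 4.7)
The plan is to verify the axioms (P0)--(P3+) for the proposed $p^\delta_D(v) := D^l_v \cdot \delta_D$. I would set up potentials as in the proof of Theorem~\ref{thm:parity_cycle}: fix a basepoint $O$ on an arc $o$ of $D$ and define $\phi \colon \mathcal A(D) \to A$ by $\phi_o := 0$, extending around the knot by the rule of Fig.~\ref{fig:change_potential} (at a crossing with arcs $a,b,c,d$ labelled as in Fig.~\ref{fig:potential_parity}, the over-strand increments $\phi$ by $\delta_d$ and the under-strand by $-\delta_a$). The first thing to check is that $\phi$ is well defined, i.e.\ returns to $0$ after one full traversal of $D$; this is exactly the normalization condition $D \cdot \delta_D = 0$. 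The same intersection calculation as in Theorem~\ref{thm:parity_cycle} then yields $p^\delta_D(v) = \phi_a - \phi_d$ at every crossing.

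With this setup, axioms (P0)--(P2) should be essentially immediate. For (P0), the difference $\phi_a - \phi_d$ at $v$ is the sum of the $\delta$-contributions at the crossings on the portion of $D$ from $d$ to $a$; invariance of $\delta$ on correspondent arcs under a morphism $f$ means this sum is preserved, so $\phi_a - \phi_d$ at $v$ equals $\phi_{a'} - \phi_{d'}$ at $f_*(v)$. For (P1), the arcs $a$ and $d$ at the disappearing R1 crossing merge into a single arc of $D'$, so by invariance $\delta_a = \delta_d$, and tracing $\phi$ through the small R1 loop gives $\phi_d - \phi_a = \delta_d - \delta_a = 0$. For (P2), at the two crossings $v_1, v_2$ of a decreasing-R2 bigon the over/under roles of the two involved strands are reversed, so $p^\delta_D(v_1) + p^\delta_D(v_2)$ telescopes to a difference of potentials at the two external arcs of one strand, and the potential jumps at $v_1$ and $v_2$ cancel against each other (the middle arc appears once with $+\delta$ and once with $-\delta$).

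The main obstacle will be (P3+). For each local configuration of a third Reidemeister move, I would verify that the weighted sum $\sum_i \epsilon_\Delta(v_i)(\phi_{a_i} - \phi_{d_i})$ telescopes to zero: the contributions of the three interior arcs of the triangle to the potential jumps at the three crossings cancel in pairs, provided the signs $\epsilon_\Delta(v_i)$ are exactly those prescribed by Fig.~\ref{pic:incidence_index}. Equivalently, at the level of the intersection formula, one can check that the cycle $\sum_i \epsilon_\Delta(v_i) D^l_{v_i}$ is homologous in $AD(D)$ to a multiple of $D$ modulo the boundary of the disk bounded by the R3-triangle, so that normalization $D \cdot \delta_D = 0$ kills it. This is a finite but bookkeeping-heavy case check over the orientations and signs of the three crossings participating in the R3 move.
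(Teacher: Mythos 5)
Your proposal is correct in outline but takes a different route from the paper for most of the axioms. The paper does \emph{not} re-introduce potentials here; instead it works entirely homologically in the closed oriented surface $AD(D)$: since the intersection pairing $H_1(AD(D),\Z)\times H_1(AD(D),A)\to A$ depends only on homology classes, (P0) follows from the contraction map $h$ that collapses the move region (so $[D^l_v]=[D'^l_{v'}]$ and $h_*[\delta_D]=h_*[\delta_{D'}]$ by invariance), (P1) from the fact that the half at an R1 crossing is either null-homologous or homologous to $D$, (P2) from $[D^l_u]+[D^l_v]=[D]$, and (P3+) from $\sum_i\epsilon_\Delta(v_i)[D^l_{v_i}]=k[D]$ with $k\in\{-1,0,1,2\}$ --- in every case normalization $D\cdot\delta_D=0$ finishes the job. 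Your potential-function approach (discretely integrating $\delta$ along the knot, with well-definedness equivalent to normalization) is a legitimate combinatorial substitute, and your parenthetical remark on (P3+) --- that $\sum_i\epsilon_\Delta(v_i)D^l_{v_i}$ is homologous to a multiple of $D$ --- is in fact exactly the paper's argument. The main caveat is that your "essentially immediate" claims for (P0)--(P2) understate the work: in the telescoping picture, invariance of $\delta$ on correspondent arcs alone does not give (P0); you must additionally check that the crossings created, destroyed, or rearranged inside the move region contribute a net increment of zero to $\phi$ along any strand traversing that region (for R3 this is the same bookkeeping you defer in (P3+), since the $\delta$-values on the three interior triangle arcs are constrained only by the cycle condition, not by arc correspondence). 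So either carry out those local computations for all move types, or adopt the homological formulation throughout --- which is what the paper does, and which handles all four axioms uniformly with no case analysis.
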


\begin{proof}
  Let us check the properties of oriented parity.

(P0) Let $f\colon D\to D'$ be a Reidemeister move, and $v\in\V(D)$ and $v'\in\V(D')$ be correspondent crossings. The coverings graph $\tilde D$ and $\tilde D'$ can be considered as embedded graphs in the surface $AD(D)$ (or $AD(D')$ when $f$ is an increasing second Reidemeister move). The cycles $D^l_v,D'^l_{v'},\delta_D$ and $\delta_{D'}$ define elements $[D^l_v]=[\tilde D^l_v]$ and $[D'^l_v]=[\tilde D'^l_{v'}]$ in $H_1(AD(D),\Z)$ and $[\delta_D],[\delta_{D'}]\in H_1(AD(D),A)$. The intersection values are the results of the intersection map
\[
H_1(AD(D),\Z)\times H_1(AD(D),A)\to H_0(AD(D),A)=A.
\]
Consider the continuous map $h\colon AD(D)\to AD(D)$ which contacts the region where the Reidemeister move occurs to a point (Fig.~\ref{fig:reidemeister_intermediate}). Then $h(\tilde D^l_v)=h(\tilde D'^l_{v'})$. Since $h$ is homotopic to the identity map, then $[D^l_v]=[h(\tilde D^l_v)]=[\tilde D'^l_v]$, and $[\delta_D]=h_*[\delta_D]$ and $[\delta_{D'}]=h_*[\delta_{D'}]$. On the other hand, $h_*[\delta_D]=h_*[\delta_{D'}]$ because the chain $\delta_D$ is invariant. Thus,
\[
p^\delta_D(v)=D^l_v\cdot\delta_D=[D^l_v]\cdot[\delta_D]=[D'^l_{v'}]\cdot[\delta_{D'}]=D'^l_{v'}\cdot\delta_{D'}=p^\delta_{D'}(v').
\]

\begin{figure}[h]
\centering\includegraphics[width=0.5\textwidth]{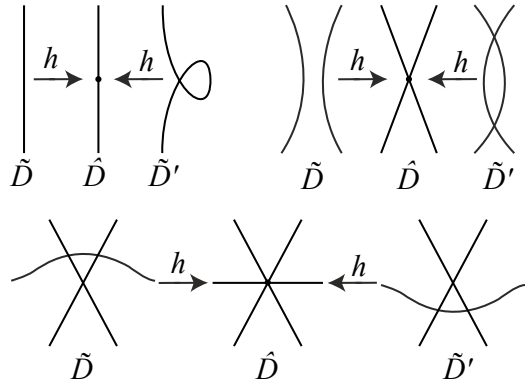}
\caption{The contraction map $h$}\label{fig:reidemeister_intermediate}
\end{figure}

(P1) Let $v\in\V(D)$ be a crossing participating in a first Reidemeister move. Then the diagram half $D^l_v$ at the crossing $v$ is either homotopic to $D$ or contractible in $AD(D)$ (Fig.~\ref{fig:half_r1}). In both cases, $D^l_v\cdot\delta_D=0$ because $\delta_D$ is normalized.

\begin{figure}[h]
\centering\includegraphics[width=0.6\textwidth]{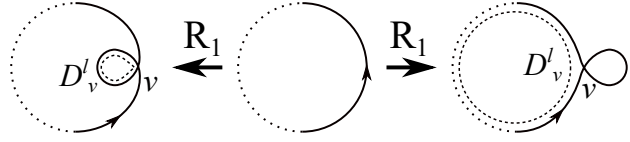}
\caption{Left half at a crossing in a first Reidemeister move}\label{fig:half_r1}
\end{figure}

(P2) Let $u,v\in\V(D)$ be crossings participating in a second Reidemeister move. Then the diagram $D$ is homologous to the sum of the halves $D^l_u$ and $D^l_v$ in $AD(D)$ (Fig.~\ref{fig:half_r2}).

\begin{figure}[h]
\centering\includegraphics[width=0.18\textwidth]{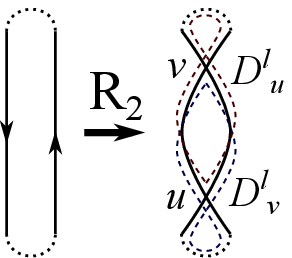}\qquad
\includegraphics[width=0.18\textwidth]{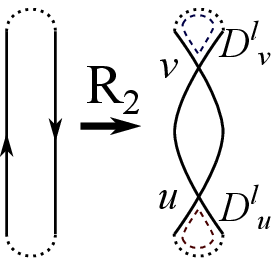}\qquad\includegraphics[width=0.27\textwidth]{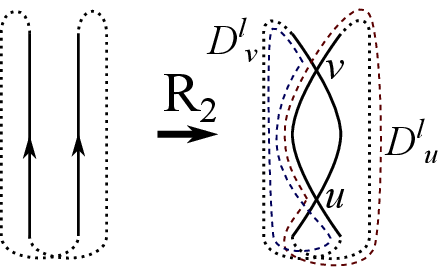}
\caption{Left halves at crossings in a second Reidemeister move}\label{fig:half_r2}
\end{figure}

Then $p^\delta_D(u)+p^\delta_D(v)=D^l_u\cdot\delta_D+D^l_v\cdot\delta_D=D\cdot\delta_D=0$.

(P3) Let $u,v,w\in\V(D)$ be crossings participating in a third Reidemeister move. If the incidence indices of the crossings coincide then the sum $[D^l_u]+[D^l_v]+[D^l_w]$ is equal either to $[D]$ or to $2[D]$ in $H_1(AD(D),\Z)$ (see Fig.~\ref{fig:half_r3} top). If $\epsilon(u)=1$ and $\epsilon(v)=\epsilon(w)=-1$ then $[D^l_u]-[D^l_v]-[D^l_w]$ is equal either to $-[D]$ or  $0$ (see Fig.~\ref{fig:half_r3} bottom).

\begin{figure}[h]
\centering\includegraphics[width=0.5\textwidth]{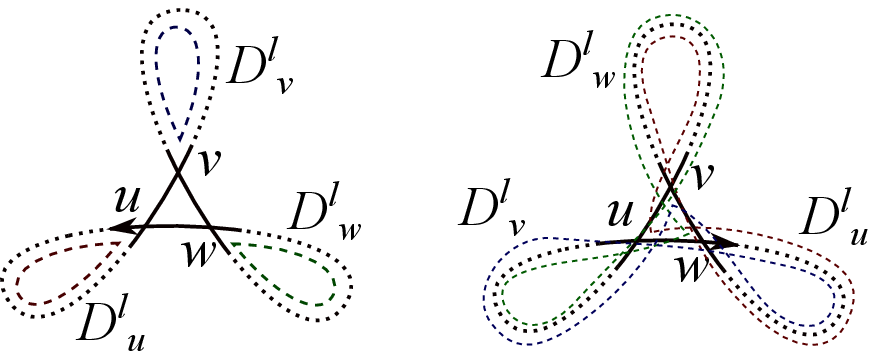}\\ \includegraphics[width=0.5\textwidth]{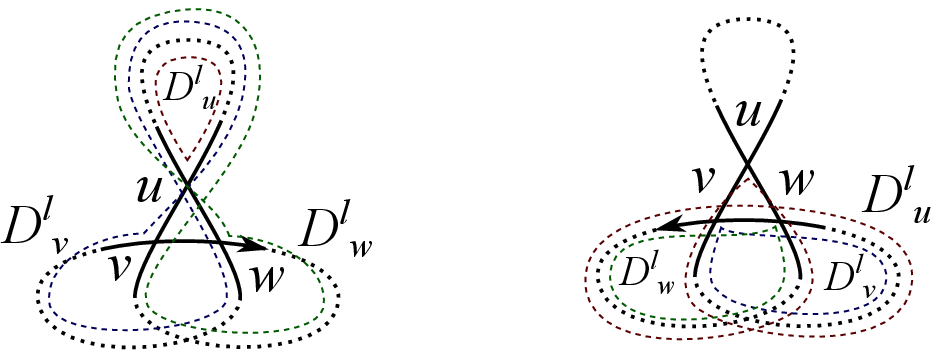}
\caption{Left halves at crossings in a third Reidemeister move}\label{fig:half_r3}
\end{figure}

In all cases \begin{multline*}
\epsilon(u)p^\delta_D(u)+\epsilon(v)p^\delta_D(v)+\epsilon(w)p^\delta_D(w)=\\
(\epsilon(u)D^l_u+\epsilon(v)D^l_v+\epsilon(w)D^l_w)\cdot\delta_D=kD\cdot\delta_D=0,
\end{multline*}
$k=-1,0,1,2$.

\end{proof}

Thus, an oriented parity defines an invariant normalized cycle, and an invariant normalized cycle generates a parity. In fact, these correspondences are inverse to each other.

\begin{proposition}\label{prop:parity_cycle_bijection}
1. For any oriented parity $p$ the parity $p^{\delta_p}$ coincides with $p$.

2. For any invariant normalized 1-cycle $\delta$ the parity cycle $\delta_{p^\delta}$ coincides with $\delta$.
\end{proposition}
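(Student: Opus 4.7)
The first claim follows almost immediately from the intersection formula in Theorem~\ref{thm:parity_cycle}. For any parity $p$ with parity cycle $\delta^p = \{\delta_D^p\}$, the definition of the induced parity in Theorem~\ref{thm:parity_cycle_inverse} gives
\[
p^{\delta^p}_D(v) = D^l_v\cdot\delta_D^p,
\]
and by item~(4) of Theorem~\ref{thm:parity_cycle} the right-hand side equals $p_D(v)$. So $p^{\delta^p}=p$ on every diagram and every crossing.

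For the second claim, fix an invariant normalized cycle $\delta=\{\delta_D\}$, set $p=p^\delta$, and write $\delta^p_D=\sum_{a\in\mathcal A(D)}\delta_a^p\cdot a$ for its parity cycle. I need to show $\delta_a^p=\delta_D(a)$ for every arc $a$ of every diagram $D$. By the definition of the potential $\delta_a^p=\delta^p_{a,\bar a}$, one applies a second Reidemeister move to arc $a$ of $D$ to obtain a diagram $D'$ with a distinguished new crossing $v$ (as in Fig.~\ref{fig:potential_delta}), and then
\[
\delta_a^p = p_{D'}(v) = D'^{l}_{v}\cdot\delta_{D'}
\]
by the definition of $p^\delta$.

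The plan is to identify the homology class of $D'^l_v$ in the abstract surface $AD(D')$. Because the R2 move that produces $v$ is a local curl of the arc $a$ onto itself, the left half $D'^l_v$ is (up to a sign depending on the orientation/incidence of the move) homologous to a small transverse disk meeting the diagram $D'$ in a single point lying on the arc $a'\in\mathcal A(D')$ corresponding to $a$. This is a short local picture analysis in the R2 neighbourhood — the only place where I need to verify the sign conventions — and it gives
\[
D'^l_v\cdot\delta_{D'} = \delta_{D'}(a').
\]
Combined with the invariance of $\delta$, which yields $\delta_{D'}(a')=\delta_D(a)$, we obtain $\delta_a^p=\delta_D(a)$ and therefore $\delta^p_D=\delta_D$.

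The only real obstacle is the local sign/geometry step that identifies $D'^l_v$ with a transversal to arc $a'$; this is entirely analogous to the last part of the proof of Theorem~\ref{thm:parity_cycle} (where the intersection of $D^l_v$ with $\delta_D$ was read off crossing-by-crossing) and amounts to inspecting the same figure used to define $\delta_a$. Once that local identification is made, both statements of the proposition are immediate, and in particular the two constructions $p\mapsto\delta^p$ and $\delta\mapsto p^\delta$ are mutually inverse bijections between oriented parities and invariant normalized $1$-cycles on the diagrams of $\mathcal K$.
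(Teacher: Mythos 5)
Your proposal is correct and follows essentially the same route as the paper: statement~1 is immediate from the intersection formula, and statement~2 is proved by evaluating $(\delta_{p^\delta})_a = p^\delta_{D'}(v) = D'^l_v\cdot\delta_{D'}$ for the crossing $v$ created by the R2 move defining $\delta_a$ and observing (as in Fig.~\ref{fig:delta_marked}) that the left half $D'^l_v$ is a small loop meeting the diagram transversally in one point on the arc corresponding to $a$. The local picture you flag as the remaining step is exactly what the paper delegates to that figure, so nothing substantive is missing.
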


\begin{proof}
The first statement follows from the intersection formula~\eqref{eq:intersection_formula} and the definition of the parity $p^{\delta_p}$.

For the second statement, take an arbitrary arc $a\in\mathcal A(D)$. Then
\[
(\delta_{p^\delta})_a=p^\delta(v)=D^l_v\cdot\delta=\delta_a
\]
(see Fig.~\ref{fig:delta_marked}). Thus cycles $\delta$ and $\delta_{p^\delta}$ coincide.

\begin{figure}[h]
\centering\includegraphics[width=0.6\textwidth]{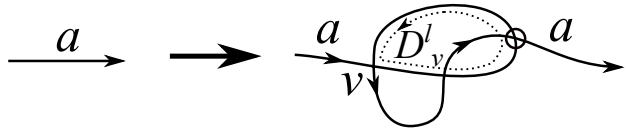}
\caption{Proof of the equality $(\delta_{p^\delta})_a=\delta_a$}\label{fig:delta_marked}
\end{figure}
\end{proof}

\begin{remark}\label{rem:parity_cycle_isomorphism}
We can reformulate Proposition~\ref{prop:parity_cycle_bijection} as follows. Given a virtual knot $\mathcal K$ and an abelian group $A$, let $\mathcal{P}(\K,A)$ be the set of oriented parities with coefficients in $A$ on the diagrams of $\mathcal K$ and $\mathcal{NIC}(\K,A)$ be the set of normalized invariant cycles with coefficients in $A$ on the diagrams of $\mathcal K$. The sets $\mathcal{P}(\K,A)$ and $\mathcal{NIC}(\K,A)$ have natural structure of abelian groups. Then the intersection formula defines an isomorphism
\[
\mathcal{P}(\K,A)\simeq\mathcal{NIC}(\K,A).
\]
\end{remark}

\begin{example}[constant parity cycle]
Let $c\in A$ be an arbitrary element. Define a constant 1-chain $\delta$: $\delta_a\equiv c$ for all arcs $a\in\mathcal A(D)$. Then $\delta$ is an invariant cycle. The cycle $\delta$ is normalized because $D\cdot\delta=cD\cdot D=0$. The parity determined by the cycle $\delta$ is proportional to the index parity:
\[
p^\delta_D(v)=c D^l_v\cdot D= -c\cdot ip_D(v),\ v\in\V(D).
\]
We have the minus in the formula because the signs in the definition of $ip_D$ are opposite to the signs of intersection points (cf. Fig.~\ref{fig:intersection_sign} and~\ref{fig:classical_crossing_sign}).
\end{example}

\begin{definition}\label{def:signature_invariant_cycle}
Let $\delta_D\in H_1(D,A)$, $D\in\mathfrak K$, be an invariant cycle with values in the group $A$. The \emph{signature} $\sigma(\delta_D)$ of the cycle $\delta_D$ is defined by the formula
\begin{equation}\label{eq:signature_invariant_cycle}
  \sigma(\delta_D)=D\cdot\delta_D\in A.
\end{equation}
\end{definition}

\begin{proposition}\label{prop:signature_invariant_cycle}
The signature $\sigma(\delta_D)$ of an invariant cycle $\delta_D$ is a knot invariant.
\end{proposition}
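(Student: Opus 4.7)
The plan is to establish $\sigma(\delta_D)=\sigma(\delta_{D'})$ for any two diagrams $D,D'$ representing the same oriented virtual knot $\K$. Since $\mathfrak K$ is generated by isotopies, detour moves, and the three Reidemeister moves, it is enough to verify invariance under each of these, and the work reduces to a homological argument very close to the one used for property (P0) in the proof of Theorem~\ref{thm:parity_cycle_inverse}.

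For an isotopy or a detour move $f\colon D\to D'$ the surface $AD(D)$ is canonically identified with $AD(D')$ and the induced arc bijection $f_*\colon\mathcal A(D)\to\mathcal A(D')$ carries $\delta_D$ to $\delta_{D'}$ by the invariance axiom in Definition~\ref{def:invariant_normalized_cycles}, while under the identification the cycle $D$ becomes $D'$; the equality $D\cdot\delta_D=D'\cdot\delta_{D'}$ is then immediate.

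For a Reidemeister move $f\colon D\to D'$ I would recycle the contraction map $h$ of Fig.~\ref{fig:reidemeister_intermediate} that collapses the small disk where the move takes place to a point. Since $h$ is homotopic to the identity, $h_*$ acts as the identity on $H_*$. Outside the move disk the two diagrams coincide, and inside both are collapsed to the same contracted configuration, giving $h_*[D]=h_*[D']$; coupled with $h_*[\delta_D]=h_*[\delta_{D'}]$ this yields
\[
D\cdot\delta_D=h_*[D]\cdot h_*[\delta_D]=h_*[D']\cdot h_*[\delta_{D'}]=D'\cdot\delta_{D'},
\]
because the intersection pairing $H_1(AD(D),\Z)\times H_1(AD(D),A)\to A$ depends only on homology classes.

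The main obstacle is verifying $h_*[\delta_D]=h_*[\delta_{D'}]$: the ``small'' arcs lying entirely inside the Reidemeister move disk (cf.\ Fig.~\ref{fig:correspondent_arcs}) have no correspondents in the other diagram, so the invariance axiom does not constrain their coefficients directly. I would exploit the cycle condition $d\delta_D=0$, built into the definition of $H_1(D,A)$, to express these coefficients in terms of those on the boundary arcs of the disk, which are pinned down by invariance. A brief case-by-case check for $R1$, $R2$, $R3$ then confirms that after contraction the two chains represent the same class in $H_1$ of the collapsed surface, completing the argument.
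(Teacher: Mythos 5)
Your proposal is correct and follows essentially the same route as the paper, which likewise reduces the statement to the homological identities $[D]=[D']$ and $[\delta_D]=[\delta_{D'}]$ in $H_1(AD(D))$ established via the contraction map $h$ in the proof of Theorem~\ref{thm:parity_cycle_inverse}, and then concludes $D\cdot\delta_D=[D]\cdot[\delta_D]=[D']\cdot[\delta_{D'}]=D'\cdot\delta_{D'}$. Your extra care about the small arcs inside the move disk is harmless but not strictly needed, since $h$ collapses those arcs to a point and their coefficients do not contribute to $h_*(\delta_D)$.
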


\begin{proof}
Let $f\colon D\to D'$ be a Reidemeister move. By reasonings of Theorem~\ref{thm:parity_cycle_inverse}, $[D']=[D]$ and $[\delta_D]=[\delta_{D'}]$ in $H_1(AD(D))$ (or in $H_1(AD(D'))$ if $f$ is an increasing second Reidemeister move). Hence,
$$D\cdot\delta_D =[D]\cdot[\delta_D]=[D']\cdot[\delta_{D'}]=D'\cdot\delta_{D'},$$
and the signature $\sigma(\delta_D)$ is invariant under Reidemeister moves.
\end{proof}

\begin{corollary}
  Let $\delta_D$ be an invariant cycle. If $D\cdot\delta_D=0$ for some diagram $D$ of the knot $\mathcal K$ then $D'\cdot\delta_{D'}=0$ for any other diagram $D'\in\mathfrak K$, i.e. the cycle is normalized.
\end{corollary}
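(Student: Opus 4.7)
The plan is to recognize this corollary as an immediate consequence of Proposition~\ref{prop:signature_invariant_cycle}, which already establishes that the signature $\sigma(\delta_D)=D\cdot\delta_D$ is a knot invariant, i.e. does not depend on the chosen diagram. So essentially no further work is required beyond unpacking definitions.

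Concretely, I would proceed as follows. First, by hypothesis we have $\sigma(\delta_D)=D\cdot\delta_D=0$ for the distinguished diagram $D\in\mathfrak K$. Next, since $D'\in\mathfrak K$ represents the same knot $\mathcal K$, there exists a sequence of Reidemeister moves (and isotopies/detour moves) connecting $D$ to $D'$. Proposition~\ref{prop:signature_invariant_cycle} asserts exactly that $\sigma(\delta_D)=\sigma(\delta_{D'})$ across each such move, so by induction on the length of the sequence we conclude $D'\cdot\delta_{D'}=\sigma(\delta_{D'})=\sigma(\delta_D)=0$. Finally, since $D'$ was arbitrary, the defining condition of Definition~\ref{def:invariant_normalized_cycles} for normalization is satisfied, hence $\delta$ is normalized.

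There is no real obstacle here — the entire content is packaged inside Proposition~\ref{prop:signature_invariant_cycle}. The only mild subtlety worth pointing out is the asymmetric ambient surface in an increasing second Reidemeister move $f\colon D\to D'$ (the homology classes need to be compared in $H_1(AD(D'))$ rather than $H_1(AD(D))$), but this is already handled in the proof of Proposition~\ref{prop:signature_invariant_cycle} and can simply be cited.
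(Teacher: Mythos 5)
Your proposal is correct and matches the paper's intent exactly: the corollary is stated without proof precisely because it is an immediate consequence of Proposition~\ref{prop:signature_invariant_cycle}, which gives the invariance of the signature $\sigma(\delta_D)=D\cdot\delta_D$ under Reidemeister moves. Your unpacking of the induction over a connecting sequence of moves is the routine argument the author leaves implicit.
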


The following statement shows that any invariant cycle defines a normalized invariant cycle.

\begin{corollary}\label{cor:cycle_normalization}
  Let $\delta$ be an invariant cycle with coefficients in an abelian group $A$. Denote $\bar A=A/\langle\sigma(\delta)\rangle$ be the factor-group by the cyclic subgroup generated by the signature, and $p\colon A\to\bar A$ be the canonical projection. Then $\bar\delta=p\circ\delta$ is an invariant normalized cycle with coefficients in $\bar A$.
\end{corollary}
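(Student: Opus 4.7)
The plan is to simply push everything through the group homomorphism $p\colon A\to\bar A$ and verify the three defining properties (cycle, invariance, normalization) for $\bar\delta$ one by one. All three are immediate once one notes that the boundary operator $d$, the arc-wise comparison used in the definition of invariance, and the intersection pairing with the integral cycle $D$ are all natural in the coefficient group. So the work reduces to bookkeeping.

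First, I would observe that for each diagram $D\in\mathfrak K$, the chain $\bar\delta_D=p\circ\delta_D\in C_1(\tilde D,\bar A)$ is the image of $\delta_D$ under the map $C_*(\tilde D,A)\to C_*(\tilde D,\bar A)$ induced by $p$. Since this map is a chain map, $d\bar\delta_D=p(d\delta_D)=0$, so $\bar\delta_D$ is a genuine $1$-cycle with coefficients in $\bar A$. Next, for invariance: if $f\colon D\to D'$ is a morphism of diagrams and $a\in\mathcal A(D)$, $a'\in\mathcal A(D')$ are correspondent arcs, then by invariance of $\delta$ we have $\delta_D(a)=\delta_{D'}(a')$ in $A$, hence $\bar\delta_D(a)=p(\delta_D(a))=p(\delta_{D'}(a'))=\bar\delta_{D'}(a')$ in $\bar A$. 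Thus $\bar\delta$ is an invariant family.

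For normalization, I would use Proposition~\ref{prop:signature_invariant_cycle} together with naturality of the intersection pairing
\[
H_1(AD(D),\Z)\times H_1(AD(D),A)\to A
\]
with respect to the coefficient group. Concretely, because the intersection number is $A$-linear in the second argument and $p$ is a homomorphism, we get
\[
D\cdot\bar\delta_D=p(D\cdot\delta_D)=p(\sigma(\delta))=0\in\bar A,
\]
the last equality holding because $\sigma(\delta)$ generates the subgroup by which $A$ is quotiented. Hence $\bar\delta$ is normalized.

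There is no real obstacle here; the only point that might require a line of justification is the commutativity $D\cdot\bar\delta_D=p(D\cdot\delta_D)$, which follows from the fact that the intersection number of $1$-cycles, as computed by summing signed contributions of intersection points weighted by the coefficients of $\delta_D$ on the arcs crossed, is visibly natural under the coefficient homomorphism $p$.
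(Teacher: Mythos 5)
Your proof is correct and follows exactly the argument the paper intends: the corollary is stated without an explicit proof precisely because the three properties transfer through the coefficient homomorphism $p$ by naturality, with normalization coming from Proposition~\ref{prop:signature_invariant_cycle} (so that $\sigma(\delta)$ is diagram-independent) and the fact that $p(\sigma(\delta))=0$ in $\bar A$. Nothing is missing.
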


\subsection{Parity cycle induced by a biquandle 1-cocycle}\label{subsect:cycle_biquandle}
Recall that a \emph{biquandle}~\cite{EN} is a set $B$ with two operations $\ast,\circ\colon B\times B\to B$ which satisfy the following conditions:
\begin{enumerate}
\item $x\circ x=x\ast x$ for all $x\in B$;
\item the maps $(x,y)\mapsto (y,x\circ y)$, $(x,y)\mapsto (x,y\ast x)$ and $(x,y)\mapsto (x\circ y, y\ast x)$ are bijections of $B\times B$
\item for any $x,y,z\in B$;
\begin{gather*}
(x\circ y)\circ (z\circ y)=(x\circ z)\circ (y\ast z),\\
(x\circ y)\ast (z\circ y)=(x\ast z)\circ (y\ast z),\\
(x\ast y)\ast (z\ast y)=(x\ast z)\ast (y\circ z).
\end{gather*}
\end{enumerate}

For a diagram $D$ of a virtual knot $\mathcal K$, a \emph{colouring} of $D$ with a biquandle $B$ is a map $c\colon\mathcal A(D)\to B$ which obeys the colouring rule in Fig.~\ref{fig:biquandle_operations} for any crossing of $D$.

\begin{figure}[h]
\centering\includegraphics[width=0.4\textwidth]{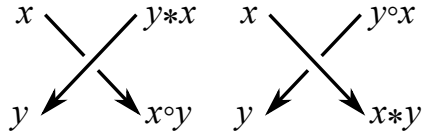}
\caption{The colouring rule}\label{fig:biquandle_operations}
\end{figure}

Let $Col_B(D)$ denote the set of colourings of the diagram $D$. For any diagram $D'$ of the same knot $\mathcal K$, there is a bijection between the colouring sets $Col_B(D)$ and $Col_B(D')$.

Given an abelian group $A$, one can associate cohomology groups $H^*(B,A)$ to a biquandle $B$~\cite{CES}. Here we need only the group $H^1(B,A)$.

A map $\theta\colon B\to A$ as called a \emph{biquandle 1-cocycle with coefficients in the group $A$} if for any $x,y\in B$ the following equality holds

\begin{equation}\label{eq:biquandle_1cocycle}
  \theta(x)-\theta(x\circ y)=\theta(y)-\theta(y\ast x).
\end{equation}

Fix a biquandle 1-cocycle $\theta\in H^1(B,A)$ with coefficients in the group $A$. For a colouring $c\in Col_B(D)$ of the diagram $D$ define a 1-chain $\delta^\theta_D=\delta^\theta_{D,c}$ by the formula

\begin{equation}\label{eq:parity_cycle_biquandle_cocycle}
  \delta^\theta_D=\sum_{a\in \mathcal A(D)} (\theta\circ c)(a)\cdot a\in C_1(D,A).
\end{equation}

By equality~\eqref{eq:biquandle_1cocycle} and the colouring rule, the chain $\delta^\theta_D$ is a 1-cycle. It is also locally invariant in the following sense.

\begin{proposition}\label{prop:biquandle_cycle_locally_invariant}
Let $f\colon D\to D'$ is an isotopy or a Reidemeister move. Let $f_*(c)\in Col_B(D')$ be the colouring that corresponds to $c\in Col_B(D)$. Then for any correspondent arcs $a\in\mathcal A(D)$ and $a'\in\mathcal A(D')$ we have $\delta^\theta_{D,c}(a)=\delta^\theta_{D',f_*(c)}(a')$.
\end{proposition}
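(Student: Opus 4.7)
The plan is to check the statement case by case on the move $f$, noting in each case that the content reduces to the assertion \emph{correspondent arcs receive identical biquandle colours}; once this is known, applying $\theta$ gives $(\theta\circ c)(a)=(\theta\circ f_*(c))(a')$, which is exactly $\delta^\theta_{D,c}(a)=\delta^\theta_{D',f_*(c)}(a')$.

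For an isotopy or a detour move, the arc sets are canonically bijective and $f_*(c)$ is defined so that corresponding arcs receive the same colour, so the equality is tautological. For a Reidemeister move $f\colon D\to D'$, recall from the definition in the paper that correspondent arcs $a\in\mathcal A(D)$, $a'\in\mathcal A(D')$ share a common point lying outside the small disc in which the move takes place; and $f_*(c)$ is defined on $D'$ by insisting that it agrees with $c$ on arcs meeting the complement of that disc, extending across the new configuration by the colouring rule. Thus on correspondent arcs one literally has $c(a)=(f_*(c))(a')$ by construction, and the claim follows immediately.

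The only subtlety is that $f_*(c)$ must be a \emph{well-defined} colouring and that, in cases where a single arc $a\in\mathcal A(D)$ has two correspondents (the two pieces of a split arc under $R1$ or $R2$), both correspondents must carry the same colour $c(a)$. In the $R1$ case the axiom $x\ast x=x\circ x$ forces the small loop arc together with both pieces $a',a''$ of the split arc to share a single colour, which must be $c(a)$. In the $R2$ case the bijection axiom for $(x,y)\mapsto (x\circ y,y\ast x)$ guarantees that after traversing the two $R2$ crossings the colours of the outer strands return to their initial values, so again $c(a)=(f_*(c))(a')=(f_*(c))(a'')$, and similarly for $b$. In the $R3$ case the three triple biquandle identities are precisely the compatibility conditions ensuring that the outgoing colours on the two sides of the move agree on corresponding arcs.

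The main obstacle, such as it is, is the $R3$ verification, since it is the only place where the triple axioms enter non-trivially; but this is the same computation as the classical proof that biquandle colourings are invariant under Reidemeister moves, so the work amounts to standard bookkeeping rather than a new idea. No use of the $1$-cocycle condition \eqref{eq:biquandle_1cocycle} is needed for local invariance; that relation is only what makes $\delta^\theta_D$ a cycle (rather than a mere chain), and was already used before the proposition.
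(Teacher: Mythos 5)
Your argument is correct and follows essentially the same route as the paper's proof, which consists of the single observation that $c(a)=f_*(c)(a')$ by definition of the colouring bijection $f_*$, followed by applying $\theta$ to both sides. Your case-by-case verification that correspondent arcs receive the same colour, and your remark that the cocycle condition~\eqref{eq:biquandle_1cocycle} is not used for local invariance, merely spell out what the paper compresses into the phrase ``by definition of the bijection $f_*$.''
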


\begin{proof}
  By definition of the bijection $f_*$, we have $c(a)=f_*(c')(a')$. Then
  \[
  \delta^\theta_{D,c}(a)=\theta(c(a))=\theta\left(f_*(c)(a')\right)=\delta^\theta_{D',f_*(c)}(a').
  \]
\end{proof}

\begin{remark}
The difference between invariance and local invariance of a 1-cycle comes from colour monodromy. We define the \emph{colour monodromy group} $Mon_B(D)$ of the diagram $D$ as the subgroup in the permutation group of the colouring set $Col_B(D)$, formed by permutations $f_*\colon Col_B(D)\to Col_B(D)$ where $f\colon D\to D$ is an arbitrary morphism (a composition of isotopies and Reidemeister moves).

If the 1-cocycle $\theta$ is invariant under the action of the colour monodromy group $Mon_B(D)$ on a given colouring $c\in Col_B(D)$:
\[
\forall f_*\in Mon_B(D)\ \forall a\in\mathcal A(D)\quad \theta(f_*(c)(a))=\theta(c(a))
\]
then the 1-cycle $\delta^\theta_{D,c}$ is an invariant cycle.

In general case, one can modify the formula~\eqref{eq:parity_cycle_biquandle_cocycle} to get an invariant 1-cycle by taking the sum over colours which belong to one orbit of the monodromy group:
\[
\tilde\delta^\theta_D=\sum_{c'\in Mon_B(D)\cdot c\subset Col_B(D)}\delta^\theta_{D,c'},
\]
or by taking the sum over all the colourings:
\[
\hat\delta^\theta_D=\sum_{c'\in Col_B(D)}\delta^\theta_{D,c'}.
\]

If the cycle $\tilde\delta^\theta_D$ (or $\hat\delta^\theta_D$) is not normalized, we can reduce it to a normalized one using Corollary~\ref{cor:cycle_normalization}.

On the other hand, one can consider the direct sum
\[
\Delta^\theta_D=\bigoplus_{c\in Col_B(D)}\delta^\theta_{D,c}\colon\V(D)\to A^{|Col_B(D)|}
\]
which is invariant under the monodromy up to permutation of the components and define an oriented parity functor with coefficients in $A^{|Col_B(D)|}$ (see Section~\ref{sect:parity_functors}).
\end{remark}


\begin{remark}
Given a biquandle 1-cocycle $\theta\in H^1(B,A)$ and a colouring $c\in Col_B(D)$, we can consider the 1-cycle $\delta^\theta_{D,c}$ and its signature $\sigma_{\theta,c}(D)=\sigma(\delta^\theta_{D,c})\in A$. The element $\sigma_{\theta,c}(D)$ is invariant under Reidemeister moves, hence, the set $\Sigma_\theta(D)=\{\sigma_{\theta,c}(D)\}_{c\in Col_B(D)}$ is an invariant of virtual knots.

In fact, the invariant $\Sigma_\theta(D)$ is closely related to biquandle cocycle invariants~\cite{CES}.
Recall that a \emph{biquandle 2-cocycle with coefficients in an abelian group $A$} is a map $\phi\colon B\times B\to A$ which obeys the following conditions:
\begin{enumerate}
\item $\phi(x,x)=0$ for all $x\in B$;
\item $\phi(x,y) -\phi(x,z)+\phi(y,z)=\phi(x\circ z,y\circ z)-\phi(x\circ y, z\ast y)+\phi(y\ast x,z\ast x)$ for all $x,y,z\in B$.
\end{enumerate}

Given a biquandle 2-cocycle $\phi$ and a colouring $c\in Col_B(D)$ one assigns to each positive crossing $v$ of the diagram (Fig.~\ref{fig:biquandle_operations} left) the \emph{Boltzman weight} $W_\phi(v,c)=\phi(x,y)$, and to each negative crossing $v$ (Fig.~\ref{fig:biquandle_operations} right) the weight $W_\phi(v,c)=-\phi(y,x)$. Then the sum
\[
\Phi_\phi(D)=\sum_{c\in Col_B(D)}\sum_{v\in\V(D)}W_\phi(v,c)\in A
\]
is an virtual knot invariant called the \emph{biquandle cycle invariant}~\cite{CES}.

Note that the term $\Phi_{\phi,c}(D)=\sum_{v\in\V(D)}W_\phi(v,c)$ is invariant under Reidemeister moves (up to identification of the colouring).

Let $B_{ind}=\Z$ be the linear biquandle with operations $x\circ y=x\ast y=x+1$. The following proposition can be checked by direct computations.

\begin{proposition}
\begin{enumerate}
\item Any 1-cocycle $\theta\in H^1(B,A)$ defines a biquandle 2-cocycle $\phi_\theta$ of the product biquandle $B\times B_{ind}$ by the formula
\begin{equation}\label{1cocycle_to_2cocycle}
\phi_\theta((x,i),(y,j))=(\theta(y)-\theta(y\ast x))\cdot(j-i);
\end{equation}
\item  $\sigma_{\theta,c}(D)=\Phi_{\phi_\theta,c}(D)$ for any $c\in Col_B(D)$.
\end{enumerate}
\end{proposition}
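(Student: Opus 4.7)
For part 1, I would verify the biquandle 2-cocycle axioms for $\phi_\theta$ on $B\times B_{ind}$ by direct substitution. The normalization $\phi_\theta((x,i),(x,i))=0$ is immediate from the factor $(i-i)$. For the main cocycle identity applied to arguments $(x,i),(y,j),(z,k)$, the product biquandle operations act by $(u,l)\circ(v,m)=(u\circ v,l+1)$ and $(u,l)\ast(v,m)=(u\ast v,l+1)$, so the $+1$ index shifts on the right-hand side cancel pairwise and each of the six terms reduces to a product of a $\theta$-difference in $B$ with a signed integer difference of $i,j,k$. The resulting identity on the $\theta$-parts then follows from the biquandle axioms of $B$ applied to combinations like $(y\circ z)\ast(x\circ z)=(y\ast x)\circ(z\ast x)$, combined with repeated use of the 1-cocycle equation~\eqref{eq:biquandle_1cocycle} to rewrite $\theta(u)-\theta(u\ast v)$ as $\theta(v)-\theta(v\circ u)$ and vice versa. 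The bookkeeping is routine.

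For part 2, the plan is a crossing-by-crossing match. Label the arcs of $D$ in traversal order as $a_0,\dots,a_{2n-1}$, so that the canonical formal $B_{ind}$-extension of $c$ assigns index $k$ to $a_k$. At a positive crossing $v$ with under-in $a_{i_v}$ and over-in $a_{j_v}$, the Boltzman weight becomes $(\theta(c(a_{j_v}))-\theta(c(a_{j_v+1})))(j_v-i_v)$, with a symmetric expression at negative crossings. To evaluate the signature, I would compute $\sigma_{\theta,c}(D)=D\cdot\delta^\theta_{D,c}$ in the surface $AD(D)$ by pushing $D$ slightly off itself to a parallel copy $D'$ and pairing it with $\delta^\theta_{D,c}=\sum_l\theta(c(a_l))a_l$. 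Each transverse intersection of $D'$ with an arc $a_l$ occurs near a crossing $v$ incident to $a_l$; collecting these contributions crossing by crossing and telescoping the $\theta$-values along the arcs passing through $v$ produces an expression matching the Boltzman weight at $v$, so that the total equals $\Phi_{\phi_\theta,c}(D)$.

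The main obstacle is to make the per-crossing decomposition of $D\cdot\delta^\theta_{D,c}$ explicit: the push-off $D'$ intersects each arc $a_l$ at several points, and one has to carefully determine the signs and how the traversal-index difference $j_v-i_v$ emerges from organizing the contributions by their crossings. A cleaner alternative is to pass via the parity associated to the normalization $\bar{\delta}^\theta_{D,c}$ provided by Corollary~\ref{cor:cycle_normalization} and use the potential $\phi$ of Theorem~\ref{thm:parity_cycle} to express $\sigma_{\theta,c}(D)$ as a sum of jumps of $\phi$ around $D$; an Abel summation over the traversal then converts this sum into the index-weighted form of the Boltzman weights. Either way, the identification $\sigma_{\theta,c}(D)=\Phi_{\phi_\theta,c}(D)$ emerges from term-by-term matching.
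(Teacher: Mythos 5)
The paper itself gives no argument for this proposition beyond the remark that it ``can be checked by direct computations,'' so your proposal has to stand on its own as that computation. Part 1 of your plan is essentially right: the normalization is immediate from the factor $(i-i)$, the product operations shift both second coordinates by $+1$ so that only the differences $j-i$, $k-i$, $k-j$ survive, and the identity on the $\theta$-parts does follow from the biquandle axioms of $B$ together with the 1-cocycle relation~\eqref{eq:biquandle_1cocycle}. One warning about the ``routine bookkeeping'': the coefficients of $(j-i)$, $(k-i)$ and $(k-j)$ on the two sides of the 2-cocycle identity do \emph{not} agree term by term. Writing $f(x,y)=\theta(y)-\theta(y\ast x)=\theta(x)-\theta(x\circ y)$, one finds that the three discrepancies $f(x\circ z,y\circ z)-f(x,y)$, $f(x\circ y,z\ast y)-f(x,z)$ and $f(y\ast x,z\ast x)-f(y,z)$ are all equal to a common element $E$, and the identity holds only because $E\cdot\left((j-i)-(k-i)+(k-j)\right)=0$. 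That cancellation is the one non-obvious step and should be displayed explicitly.

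Part 2 has a genuine gap. The ``canonical formal $B_{ind}$-extension'' that labels the arcs $0,1,\dots,2n-1$ in traversal order is not a colouring (the label would have to close up to $2n$ rather than $0$), and, more importantly, it feeds the wrong integer into the Boltzman weight. With traversal-order labels, the difference $j-i$ of the two incoming labels at a crossing $v$ is, up to sign, the \emph{unsigned} number of crossing passages along one half of the diagram, and it depends on the basepoint. On the other hand, the quantity you must reproduce is $\sigma_{\theta,c}(D)=D\cdot\delta^\theta_{D,c}$, which by the paper's own formula~\eqref{eq:quasiindex_signature} equals $-\sum_{v}\pi_D(v)\cdot ip_D(v)$, where $ip_D(v)$ is the \emph{signed} intersection count and the quasi-index of $\delta^\theta_{D,c}$ is computed from the colouring rule and~\eqref{eq:biquandle_1cocycle} to be $\pi_D(v)=\theta(c(\mathrm{out}))-\theta(c(\mathrm{in}))=-(\theta(y)-\theta(y\ast x))$ at a positive crossing. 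Hence the identity $\sigma_{\theta,c}(D)=\Phi_{\phi_\theta,c}(D)$ amounts precisely to the statement that the label difference $j-i$ at each crossing equals $ip_D(v)$; the traversal-order assignment does not produce that (already on the virtual trefoil the two quantities differ), and neither your push-off bookkeeping nor the Abel-summation alternative addresses this sign discrepancy. The way to close the argument is to compute the quasi-index of $\delta^\theta_{D,c}$ as above, substitute it into~\eqref{eq:quasiindex_signature}, and use an index labelling in which the two strands at a crossing shift by opposite amounts, so that it closes up around the knot and its difference at a crossing is exactly $ip_D(v)$; only then does the per-crossing term match the Boltzman weight $W_{\phi_\theta}(v,c)$.
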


\end{remark}

\section{Quasi-index}\label{sect:quasiindex}

Let $\mathcal K$ be a virtual knot and $\mathfrak K$ be its diagram category. Let $A$ be an abelian group. Let $p$ be an oriented parity with coefficients in $A$ on the diagrams of the knot $\mathcal K$ and $\delta_D, D\in \mathfrak K$, be its parity cycle.

Let $D$ be a diagram of the knot $\mathcal K$. For any crossing $v$ of the diagram $D$  (see Fig.~\ref{fig:potential_parity}) consider the element (Fig.~\ref{fig:quasiindex_def})
\begin{equation}\label{eq:parity_quasiindex}
\pi_D(v)=\delta_c-\delta_a=\delta_b-\delta_d.
\end{equation}
Thus, we have a map $\pi_D\colon \V(D)\to A$.

\begin{figure}[h]
\centering\includegraphics[width=0.5\textwidth]{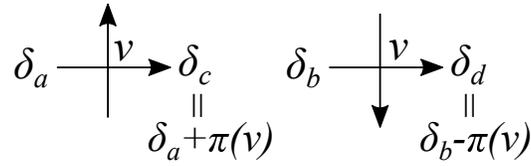}
\caption{Quasi-index of a crossing}\label{fig:quasiindex_def}
\end{figure}

\begin{definition}\label{def:quasiindex}
Let $\mathcal K$ be a virtual knot and $\mathfrak K$ be its diagram category. A family of maps $\pi_D\colon \V(D)\to A, D\in\mathfrak K$, is called a \emph{quasi-index} on the diagrams of the knot $\mathcal K$ if the following conditions hold
\begin{itemize}
\item[(Q0)] for any Reidemeister move $f\colon D\to D'$ and any crossing $v\in\V(D)$ which does not take part in the move, one has $\pi_{D}(v)=\pi_{D'}(f_*(v))$;

\item[(Q2)] $\pi_D(v_1)=\pi_D(v_2)$ for any crossings $v_1,\,v_2\in\V(D)$ to which a decreasing second Reidemeister move can be applied;

\item[(Q3)] if $v_1,v_2,v_3\in\V(D)$ are the crossings which take part in a third Reidemeister move $f\colon D\to D'$ then there exists an element $\lambda(f)\in A$ such that
\[ \pi_{D'}(f_*(v_i))=\pi_{D}(v_i)+\epsilon_\Delta(v_i)\cdot\lambda(f),\ i=1,2,3,
\]
where  $\epsilon_\Delta(v_i)$ is the incidence index of the crossing $v_i$ to the disappearing triangle $\Delta$.
\end{itemize}

A quasi-index $\pi$ is called an \emph{index} on the diagrams of the knot $\mathcal K$ if the terms $\lambda(f)$ in condition (Q3) are equal to $0$ for all third Reidemeister moves on the diagrams of $\mathcal K$. In other words, third Reidemeister moves do not change the indices of the correspondent crossings.
\end{definition}

\begin{remark}
The notion of index on knot diagrams appears naturally when one tries to give a general definition of index polynomial. This notion was introduced by M. Xu~\cite{Xu}  (under the name \emph{weak chord index}) who elaborated a more restrictive notion of \emph{chord index} given by Z. Cheng~\cite{Cheng2}, see also~\cite{CGX,CFGMX}.
\end{remark}

\begin{theorem}\label{thm:quasiindex}
Let $\mathcal K$ be a virtual knot and $p$ be an oriented parity with coefficients in a group $A$ on the diagrams of the knot $\mathcal K$. Let $\delta$ be the parity cycle of $p$ and $\pi_D\colon \V(D)\to A$, $D\in\mathfrak K$, be the map defined above. Then
\begin{enumerate}
\item the family of maps $\pi$ is a quasi-index on diagrams of the knot $\mathcal K$;

\item for any diagram $D\in\mathfrak K$ there is a unique element $\rho(D)\in A$ such that
\begin{equation}\label{eq:cycle_quasiindex}
\delta_D=\sum_{v\in\V(D)} \pi_D(v)\cdot D^r_v+\rho(D)\cdot D\in H_1(D,A).
\end{equation}
\end{enumerate}
\end{theorem}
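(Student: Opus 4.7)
The plan is to verify the three axioms of quasi-index directly from the definition $\pi_D(v)=\delta_c-\delta_a$ and the properties of the parity cycle $\delta$ established in Theorem~\ref{thm:parity_cycle}, and then prove the expansion formula in the second part as an equality of $1$-chains by a walking argument along the diagram.

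For axiom (Q0), the invariance of $\delta$ immediately gives $\pi_D(v)=\pi_{D'}(f_*(v))$ whenever $v$ is untouched by $f\colon D\to D'$, since the four arcs around $v$ correspond to arcs around $f_*(v)$ with equal $\delta$-values. For (Q2), after a decreasing second Reidemeister move the three consecutive arcs on each of the two strands of the bigon merge into a single arc in $D'$, so by invariance of $\delta$ all their $\delta$-values coincide; hence the incoming and outgoing arcs at $v_1$ (and similarly at $v_2$) share a $\delta$-value and $\pi_D(v_1)=\pi_D(v_2)=0$. For (Q3), I label the three inner arcs of the disappearing triangle $\Delta$ as $y_1,y_2,y_3$ (one per strand) and the outer arcs as $x_i,z_i$; by invariance the outer arcs retain their $\delta$-values in $D'$, while the inner arcs become some $y'_1,y'_2,y'_3$. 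Applying Corollary~\ref{cor:parity_delta_potential} at each of the three crossings before and after the move and subtracting, one obtains three identities that all collapse to $\delta_{y'_i}-\delta_{y_i}=\lambda(f)$, a single element of $A$ independent of $i$. A case-by-case check at the three crossings on the strand carrying the $(a,c)$-labeling then yields $\pi_{D'}(f_*(v_i))-\pi_D(v_i)=\epsilon_\Delta(v_i)\cdot\lambda(f)$, the sign being dictated by whether the changing inner arc is outgoing or incoming on that strand.

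For the second part I compare the right-hand side $\sum_{v\in\V(D)}\pi_D(v)\cdot D^r_v+\rho\cdot D$ with $\delta_D$ coefficient-by-coefficient in $C_1(\tilde D,A)$. Fix a base arc $e_0\in\mathcal A(D)$ and set $\rho(D)=\delta_{e_0}-\sum_{v\colon e_0\in D^r_v}\pi_D(v)$. Walking along $D$ from $e_0$ to an arbitrary arc $e$, at each traversed crossing $w$ the current arc passes between $D^l_w$ and $D^r_w$ on exactly one strand, changing the sum $\sum_{v\colon\text{current}\in D^r_v}\pi_D(v)$ by $\pm\pi_D(w)$; for the convention of $D^r_w$ implicit in the proof of the intersection formula~\eqref{eq:intersection_formula} this matches the simultaneous change of $\pm\pi_D(w)$ in $\delta$ at $w$. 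Hence the difference $\delta_e-\sum_{v\colon e\in D^r_v}\pi_D(v)$ is constant along the walk and equal to $\rho(D)$, which gives $\delta_D=\sum_{v\in\V(D)}\pi_D(v)\cdot D^r_v+\rho(D)\cdot D$ as $1$-chains. Uniqueness follows because $D=\sum_{e\in\mathcal A(D)}e$ has every arc with coefficient $1$, so $(\rho-\rho')\cdot D=0$ forces $\rho=\rho'$.

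The main obstacle is the sign verification in (Q3): once one knows that all three inner-arc differences equal a single $\lambda(f)\in A$, one must check that for every configuration of the disappearing triangle (including the alternating cases) the resulting sign on $\pi_{D'}(f_*(v_i))-\pi_D(v_i)$ is precisely $\epsilon_\Delta(v_i)$ as defined in Fig.~\ref{pic:incidence_index}. The analogous sign bookkeeping in the second part is easier but also depends on fixing the same convention for the halves $D^r_v,D^l_v$ as used in the proof of Theorem~\ref{thm:parity_cycle}.
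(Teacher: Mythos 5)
Your part 2 and your verification of (Q0) are sound: the walking argument showing that $\delta_e-\sum_{v\colon e\in D^r_v}\pi_D(v)$ is independent of the arc $e$ is a legitimate rephrasing of the paper's observation that $\delta-\sum_{v}\pi_D(v)\cdot D^r_v$ vanishes on the chords of the Gauss diagram and is therefore a multiple of the core circle, and your uniqueness argument for $\rho(D)$ is correct. The difficulties are in (Q2) and (Q3), and both stem from the same misreading of invariance. For (Q2) you claim that after a decreasing second Reidemeister move ``all their $\delta$-values coincide,'' including the inner arcs of the bigon, and conclude $\pi_D(v_1)=\pi_D(v_2)=0$. Invariance (Definition~\ref{def:invariant_normalized_cycles}) constrains only correspondent arcs; an inner arc of the bigon lies entirely inside the move region, has no correspondent arc in $D'$, and its $\delta$-value is not controlled. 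The conclusion is in fact false: for the cycle of Example~\ref{ex:long_knot_quasiindex} the value of $\delta$ jumps by $1$ across every crossing, so the inner arcs of a bigon differ from the outer ones and the quasi-index is the nonzero constant $-1$ at every crossing, bigon crossings included. All that (Q2) asserts, and all that is true, is $\pi_D(v_1)=\pi_D(v_2)$; this follows from $\pi=\delta_c-\delta_a=\delta_b-\delta_d$ together with the equality of the $\delta$-values of the four \emph{outer} arcs only, as in the paper's computation $\pi(v_1)=a-d=\pi(v_2)$.

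The same error is fatal for (Q3). Subtracting the crossing relation $\delta_a+\delta_b=\delta_c+\delta_d$ at $v_k$ from the one at $v'_k$ does not yield a relation purely among the increments $\delta_{y'_i}-\delta_{y_i}$, because the two outer arcs incident to $v_k$ before the move are not those incident to $v'_k$ after it (each strand meets its two triangle crossings in the opposite order after the move), so the outer terms do not cancel. Concretely, with the paper's labels one has $\pi(v_3)=a_2-x=c_1-z$ and $\pi(v'_3)=x'-a_1=z'-c_2$, whence $(\delta_{x'}-\delta_x)-(\delta_{z'}-\delta_z)=(a_1-a_2)+(c_1-c_2)$, which equals minus the sum of the total jumps of $\delta$ along two of the strands and is nonzero in general. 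Hence there is no single $\lambda(f)$ with $\delta_{y'_i}-\delta_{y_i}=\lambda(f)$ for all $i$, and the case-by-case sign check you propose to build on that identity cannot be carried out. The quantities that actually survive the move are the telescoped sums $\pi(v_1)+\pi(v_3)=a_2-a_1$ and $\pi(v_2)+\pi(v_3)=c_1-c_2$, which involve only outer arcs and are therefore unchanged; setting $\lambda(f)=\pi(v'_3)-\pi(v_3)$ then forces $\pi(v'_1)-\pi(v_1)=\pi(v'_2)-\pi(v_2)=-\lambda(f)$, which is exactly (Q3) with $\epsilon_\Delta(v_3)=1$ and $\epsilon_\Delta(v_1)=\epsilon_\Delta(v_2)=-1$. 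You need to replace your inner-arc identity with this telescoping argument for the proof of part 1 to stand.
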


\begin{proof}
1. Let us check the quasi-index properties.

The property (Q0) follows from the invariance of the parity cycle $\delta$.

Let $v_1$ and $v_2$ be two crossings participating in a second Reidemeister move (Fig.~\ref{fig:quasiindex_r2}). Let $a,b,c,d$ be the values of the parity cycle $\delta$ on the arc incident to $v_1,v_2$. By definition, $\pi(v_1)=a-d$ and $\pi(v_2)=a-d$. Thus, $\pi(v_1)=\pi(v_2)$. The other type of second Reidemeister move can be checked analogously.

\begin{figure}[h]
\centering\includegraphics[width=0.2\textwidth]{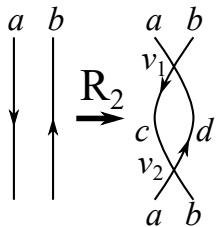}
\caption{A second Reidemeister move}\label{fig:quasiindex_r2}
\end{figure}

Let $v_1,v_2,v_3\in\V(D)$ be the crossings participating in a third Reidemeister move $f\colon D\to D'$ and $v'_1,v'_2,v'_3\in\V(D')$ be the corresponding crossings in the diagram $D'$. Then $\epsilon_\Delta(v_1)=\epsilon_\Delta(v_2)=-1$ and $\epsilon_\Delta(v_3)=1$. By definition,
\begin{align*}
\pi(v_1)&=x-a_1, & \pi(v_2)&=z-c_2, & \pi(v_3)&=a_2-x=c_1-z, \\
\pi(v'_1)&=a_2-x', & \pi(v'_2)&=c_1-z', & \pi(v'_3)&=x'-a_1=z'-c_2.
\end{align*}
Hence,
\begin{gather*}
\pi(v_1)+\pi(v_3)=a_2-a_1=\pi(v'_1)+\pi(v'_3),\\ \pi(v_2)+\pi(v_3)=c_1-c_2=\pi(v'_2)+\pi(v'_3).
\end{gather*}

Let $\lambda(f)=\pi(v'_3)-\pi(v_3)$. Then $\pi(v'_1)-\pi(v_1)=\pi(v'_2)-\pi(v_2)=-\lambda(f)$. Thus, the condition (Q3) is fulfilled.

Other cases of third Reidemeister move can be checked analogously.

\begin{figure}[h]
\centering\includegraphics[width=0.5\textwidth]{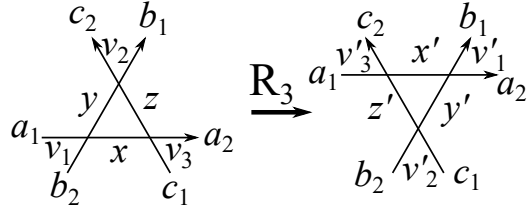}
\caption{A third Reidemeister move}\label{fig:quasiindex_r3}
\end{figure}

2. Let $D$ be a diagram of the knot $\mathcal K$, $\tilde D\subset AD(D)$ be its covering graph, and $G(D)$ be its Gauss diagram. We use a non standard orientation of the chords of $G(D)$: from the undercrossing to the overcrossing for the chords corresponding to positive crossings of $D$, and from the overcrossing to the undercrossing for the negative chords.

The parity cycle $\delta$ can be considered as a 1-cycle on the Gauss diagram $G(D)$. The labels of $\delta$ on the core circle of $G(D)$ come from the labels on the edges of $\tilde D$, i.e. on the long arcs of the diagram $D$. Then the labels of $\delta$ on the chords of $G(D)$ coincide with quasi-indices of the correspondent crossings (see Fig.~\ref{fig:quasiindex_gauss}).

\begin{figure}[h]
\centering\includegraphics[width=0.6\textwidth]{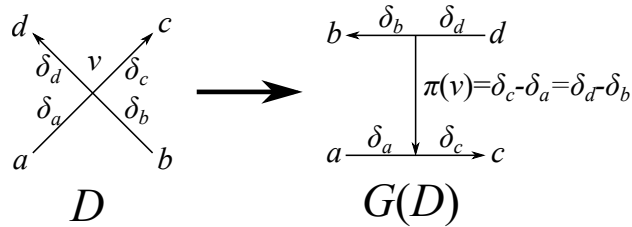}
\caption{Parity cycle on the Gauss diagram}\label{fig:quasiindex_gauss}
\end{figure}

Each (oriented) chord $v$ of $G(D)$ defines a 1-cycle which corresponds to the right half $D^r_v$ of the diagram $D$ at the crossing $v$ (Fig.~\ref{fig:gauss_right_half}).

\begin{figure}[h]
\centering\includegraphics[width=0.2\textwidth]{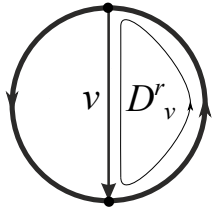}
\caption{Right half of a crossing in the Gauss diagram}\label{fig:gauss_right_half}
\end{figure}

Now, let us consider the cycle $\delta'=\delta-\sum_{v\in\V(D)}\pi(v)\cdot D^r_v\in H_1(G(D),A)$. The values of $\delta'$ on the chords are zero, then $\delta'$ is a cycle on the core circle of the Gauss diagram. Hence, the values of $\delta'$ on the arcs of the core circle are all equal to some element $\rho\in A$. Then $\delta'=\rho\cdot D$ and
\[
\delta=\sum_{v\in\V(D)}\pi(v)\cdot D^r_v+\rho\cdot D.
\]
\end{proof}

\begin{definition}
The quasi-index $\pi$ defined by the formula~\eqref{eq:parity_quasiindex} is called the \emph{parity quasi-index} of the parity $p$. The element $\rho(D)$ is called the \emph{reminder} of the parity $p$.
\end{definition}

\begin{remark}
We can give a topological interpretation of the equality~\eqref{eq:cycle_quasiindex}. Let $\bar G(D)=G(D)\cup e^2$ be the cell complex obtained from $G(D)$ by gluing a 2-cell $e^2$ to the core circle. Then $\bar G(D)$ is homotopically equivalent to the bouquet of circles $\bigvee_{v\in\V(D)}S^1$. The inclusion map $i\colon G(D)\to \bar G(D)$ induces an exact sequence in homology
\[
0\to \ker(i_*)\to H_1(G(D),A)\to H_1(\bar G(D),A)\to 0.
\]
But $H_1(\bar G(D),A)$ is isomorphic to $H_1(\bigvee_{v\in\V(D)}S^1,A)=A^{\V(D)}$, and $\ker(i_*)$ is equal to $A\cdot D\simeq A$. Thus, we have the exact sequence
\begin{equation}\label{eq:}
  0\to A\to H_1(G(D),A)\to A^{\V(D)}\to 0.
\end{equation}
The correspondence $v\mapsto D^r_v$, $v\in\V(D)$, defines a splitting of this exact sequence. Thus, $H_1(G(D),A)=A^{\V(D)}\oplus A$.
\end{remark}

The formula~\eqref{eq:cycle_quasiindex} and the intersection formula~\eqref{eq:intersection_formula} immediately imply the following
\begin{corollary}\label{quasiindex_to_parity}
Let $p$ be an oriented parity with coefficients in $A$ on diagrams of the knot $\mathcal K$, and $\pi$ be the parity quasi-index of $p$. Then for any diagram $D$ and any crossing $v\in\V(D)$
\begin{equation}\label{eq:intersection_formula_quasiindex}
p_D(v)=\sum_{v'\in\V(D)}\pi_D(v')\cdot(D^l_v\cdot D^r_{v'})-\rho(D)\cdot ip_D(v).
\end{equation}
\end{corollary}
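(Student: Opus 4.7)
The proof is essentially a one-line combination of the two results cited just before the corollary, so my plan is to make that combination explicit and justify the sign on the last term.

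First I would substitute the expression for the parity cycle from equation~\eqref{eq:cycle_quasiindex} into the intersection formula~\eqref{eq:intersection_formula}. By bilinearity of the intersection pairing on $H_1(AD(D))$,
\[
p_D(v) = D^l_v \cdot \delta_D = D^l_v \cdot \left( \sum_{v' \in \V(D)} \pi_D(v') \cdot D^r_{v'} + \rho(D) \cdot D \right)
= \sum_{v' \in \V(D)} \pi_D(v') \cdot (D^l_v \cdot D^r_{v'}) + \rho(D) \cdot (D^l_v \cdot D).
\]

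Next I would identify the last intersection number with the index parity. This is precisely the content of the constant parity cycle example: taking the constant $1$-cycle (with coefficient $c=1$) one obtains $p^\delta_D(v) = D^l_v \cdot D = -ip_D(v)$, the sign coming from the mismatch between the sign conventions of Figures~\ref{fig:intersection_sign} and~\ref{fig:classical_crossing_sign}. Substituting $D^l_v \cdot D = -ip_D(v)$ into the formula above yields the claimed identity.

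There is no real obstacle here; the only subtle point is keeping track of the sign in $D^l_v \cdot D = -ip_D(v)$, which I would justify by referring back to the constant parity cycle example rather than re-deriving it. Everything else is formal manipulation with the linearity of the intersection pairing.
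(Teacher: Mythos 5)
Your proposal is correct and is exactly the argument the paper intends: the corollary is stated as an immediate consequence of substituting the decomposition~\eqref{eq:cycle_quasiindex} of the parity cycle into the intersection formula~\eqref{eq:intersection_formula} and using bilinearity of the intersection pairing, with the sign identification $D^l_v\cdot D=-ip_D(v)$ coming from the constant parity cycle example. Nothing is missing.
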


Thus, the parity of a crossing is defined by the parity quasi-index and the reminder term $\rho(D)\in A$. Let us look how the reminder changes under Reidemeister moves.

\begin{proposition}\label{prop:reminder_reidemeister}
Let $p$ be an oriented parity with coefficients in $A$ on diagrams of the knot $\mathcal K$, and $\rho$ be the reminder of $p$.
\begin{enumerate}
\item Let $f\colon D\to D'$ be an increasing first Reidemeister move, and $v'_0\in\V(D')$ be the new crossing. Then
\[
\rho(D')=\rho(D)-\pi_{D'}(v'_0)\cdot k(f)
\]
where the coefficient $k(f)$ depends on the type of the appeared loop (Fig.~\ref{fig:loop_types}):
\[
k(f)=\left\{\begin{array}{cl}
              1, & \mbox{the loop is of type } l_\pm,  \\
              0, & \mbox{the loop is of type } r_\pm.
            \end{array}\right.
\]

\begin{figure}[h]
\centering\includegraphics[width=0.4\textwidth]{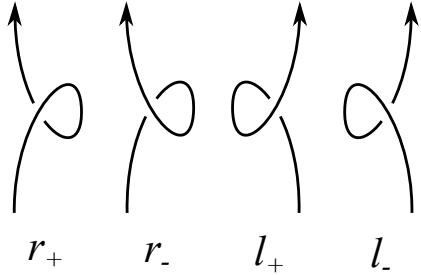}
\caption{Types of loops}\label{fig:loop_types}
\end{figure}

\item Let $f\colon D\to D'$ be an increasing second Reidemeister move, and $v'_1,v'_2\in\V(D')$ be the new crossings.
Then
\[
\rho(D')=\rho(D)-\pi_{D'}(v'_1).
\]
\item Let $f\colon D\to D'$ be a third Reidemeister move applied to crossings $v_1,v_2,v_3\in\V(D)$, and $v'_1,v'_2,v'_3$ be the corresponding crossings in $D'$.
Then
\[
\rho(D')=\rho(D)-\lambda(f)\cdot k(f).
\]
where $\lambda(f)$ is the term in property (Q3) and the coefficient $k(f)$ is determined by the type of the third Reidemeister move (Fig.~\ref{fig:r3_types}).

\begin{figure}[h]
\centering\includegraphics[width=0.55\textwidth]{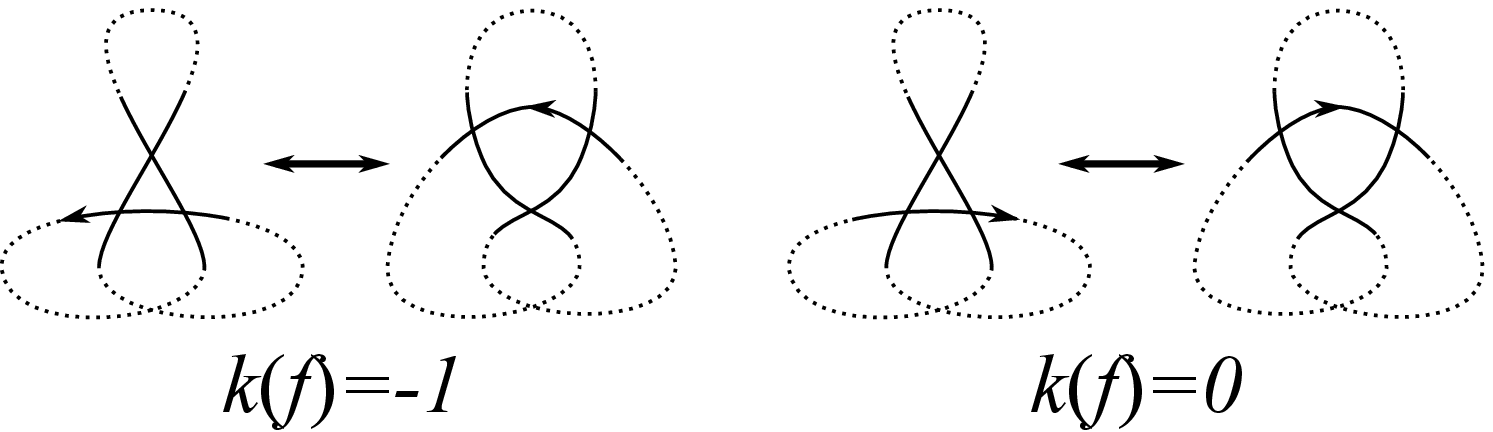}\\ \includegraphics[width=0.6\textwidth]{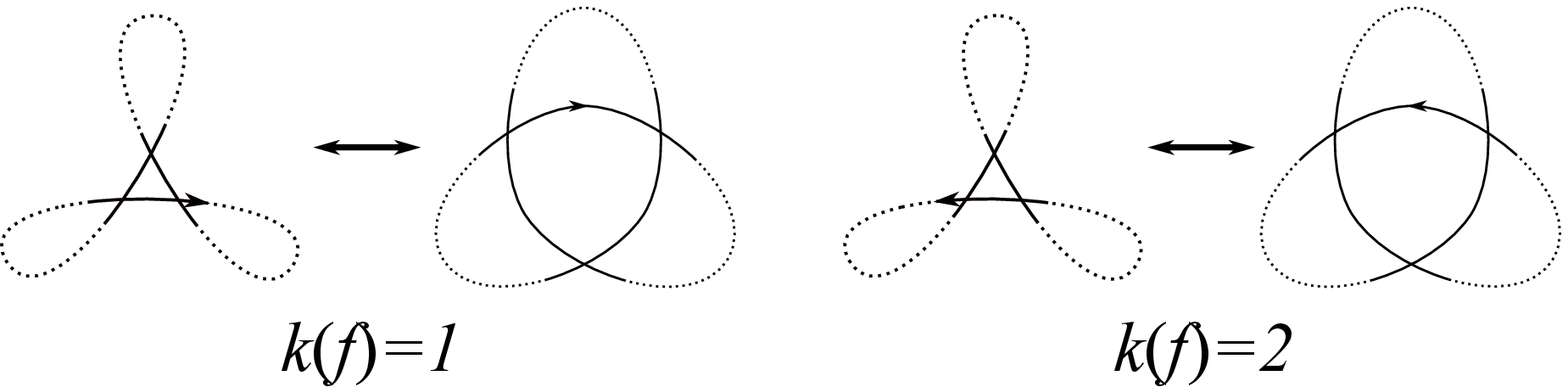}
\caption{Types of third Reidemeister moves}\label{fig:r3_types}
\end{figure}

\end{enumerate}
\end{proposition}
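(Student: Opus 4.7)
The plan is to evaluate the defining formula of Theorem~\ref{thm:quasiindex}(2),
\[
\delta_D = \sum_{v \in \V(D)} \pi_D(v) \cdot D^r_v + \rho(D) \cdot D,
\]
on an arc $a \in \mathcal{A}(D)$ chosen well outside the disk in which the Reidemeister move $f\colon D \to D'$ acts, and to compare it with the analogous formula on the corresponding arc $a' \in \mathcal{A}(D')$. Since $\delta$ is an invariant cycle by Theorem~\ref{thm:parity_cycle}, the coefficient of $a$ in $\delta_D$ equals the coefficient of $a'$ in $\delta_{D'}$; by (Q0), the quasi-indices of old crossings are unchanged. Hence the difference $\rho(D') - \rho(D)$ is determined entirely by the contributions to $a'$ of the right halves that are new or that change shape under $f$.

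For case~(1), only $v'_0$ is new, and whether $D'^r_{v'_0}$ passes through the distant arc $a'$ depends on the loop type: for loops $l_\pm$ the right half is the outer portion of the diagram and contains $a'$ with multiplicity $1$, whereas for loops $r_\pm$ the right half is the small loop itself and $a'$ has multiplicity $0$. The formula $\rho(D') = \rho(D) - k(f)\,\pi_{D'}(v'_0)$ follows immediately.

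For case~(2), among the two new crossings $v'_1, v'_2$ of the bigon, the right halves $D'^r_{v'_1}$ and $D'^r_{v'_2}$ are complementary: one is the small bigon arc, while the other picks up essentially the whole remaining diagram. Exactly one of them therefore contains the far-away arc $a'$, and this contribution equals $\pi_{D'}(v'_1) = \pi_{D'}(v'_2)$ by (Q2), yielding $\rho(D') = \rho(D) - \pi_{D'}(v'_1)$.

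For case~(3), no crossings are created or destroyed, but the three triangle vertices have their quasi-indices shifted by $\epsilon_\Delta(v_i)\lambda(f)$ according to (Q3). Substituting the shift into the formula and using that away from the triangle the signed sums $\sum_i \epsilon_\Delta(v_i) D^r_{v_i}$ and $\sum_i \epsilon_\Delta(v_i) D'^r_{f_*(v_i)}$ have the same multiplicity at $a'$ (they differ only inside the move disk), the $\pi_D(v_i)$-terms cancel. What remains is a $\lambda(f)$-term whose coefficient at $a'$ equals the signed count of how many of the post-move right halves at the triangle vertices contain $a'$, and a case analysis over the R3 types pictured in Figure~\ref{fig:r3_types} identifies this count with $k(f)$. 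The main obstacle will be the systematic bookkeeping for case~(3): one must verify for each combinatorial type of R3 move (distinguished by the signs of the crossings, the orientations of the strands, and the cyclic placement of the over-arcs) that $\sum_i \epsilon_\Delta(f_*(v_i))\, n'_{f_*(v_i)}(a')$ indeed reproduces the coefficient $k(f)$ prescribed by Figure~\ref{fig:r3_types}. This is a finite but non-trivial enumeration; the homological viewpoint used in Theorem~\ref{thm:parity_cycle_inverse}, where $\sum_i \epsilon_\Delta(v_i)[D^l_{v_i}]$ was computed modulo $[D]$ and shown to be $k[D]$ for $k \in \{-1,0,1,2\}$, provides the natural template for the right-half analogue.
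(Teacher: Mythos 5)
Your proof is correct and follows essentially the same route as the paper: there the two sides are compared after applying the map $h$ that contracts the move region, so that $h_*(\delta_D)=h_*(\delta_{D'})$ forces the coefficient of $\hat D$ to vanish, which is exactly your coefficient comparison at a distant arc, and the paper likewise defers the R3 count $\sum_{i}\epsilon_\Delta(v_i)\cdot\hat D^r_{v_i}=k(f)\cdot\hat D$ to the enumeration of Fig.~\ref{fig:r3_types}. One cosmetic caveat: in case (2) the two right halves $D'^r_{v'_1}$ and $D'^r_{v'_2}$ are in general the two complementary long pieces of the knot between the new crossings rather than ``a small bigon arc and the rest,'' but your operative conclusion that exactly one of them meets the distant arc is precisely the fact $h_*(D'^r_{v'_1}+D'^r_{v'_2})=\hat D$ used in the paper.
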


\begin{proof}
  The change of the reminder $\rho$ under a Reidemeister move $f\colon D\to D'$ is determined by the invariance condition of the parity cycle $\delta$.

  The diagrams $D$ and $D'$ can be lifted to the abstract knot diagram $AD(D')$, and their lifts differ only in the region where the Reidemeister move occurs. Just like in Theorem~\ref{thm:parity_cycle_inverse}, consider a map $h\colon AD(D')\to AD(D')$ which contracts this region to a point, and let $\hat D=h(D)=h(D')$. The map $h$ induces maps of 1-chains from $C_1(D,A)$ and $C_1(D',A)$ to $C_1(\hat D,A)$ which we denote by $f_*$. Then $h_*(D)=h_*(D')=\hat D$, $h_*(D^r_v)=h_*(D'^r_{f_*(v)})=\hat D^r_v$ for all crossings $v\in\V(D)$, and $h_*(\delta_D)=h_*(\delta_{D'})$ by the invariance of $\delta$.

1. Let $f\colon D\to D'$ be an increasing first Reidemeister move with the new crossing $v'_0\in\V(D')$. Then
\begin{multline*}
  0=h_*(\delta_{D'})-h_*(\delta_D)=\\
  \sum_{v'\in\V(D')} \pi_{D'}(v')\cdot h_*(D'^r_{v'})+\rho(D')\cdot h_*(D')-\sum_{v\in\V(D)} \pi_D(v)\cdot h_*(D^r_v)-\rho(D)\cdot h_*(D)=\\
  \sum_{v\in\V(D)} (\pi_{D'}(f_*(v))-\pi_D(v))\cdot \hat D^r_v+\pi_{D'}(v'_0)\cdot h_*(D'^r_{v'_0}) +(\rho(D')-\rho(D))\cdot \hat D=\\
  \pi_{D'}(v'_0)\cdot h_*(D'^r_{v'_0}) +(\rho(D')-\rho(D))\cdot \hat D
\end{multline*}
But $h_*(D'^r_{v'_0})=h_*(D')=\hat D$ for loops of type $l_\pm$ and $h_*(D'^r_{v'_0})=0$ for loops of type $r_\pm$. Then $(\rho(D')-\rho(D)+\pi_{D'}(v'_0)\cdot k(f))\cdot\hat D=0\in C_1(\hat D,A)$. Hence, $\rho(D')-\rho(D)+\pi_{D'}(v'_0)\cdot k(f)=0$.

2. Let $f\colon D\to D'$ be an increasing second Reidemeister move, and $v'_1,v'_2\in\V(D')$ be the new crossings. Then by properties (Q0) and (Q2) we have
\begin{multline*}
0=h_*(\delta_{D'})-h_*(\delta_D)=\\
\pi_{D'}(v'_1)\cdot h_*(D'^r_{v'_1})+\pi_{D'}(v'_2)\cdot h_*(D'^r_{v'_2})+\rho(D')\cdot h_*(D')-\rho(D)\cdot h_*(D)=\\
\pi_{D'}(v'_1)\cdot h_*(D'^r_{v'_1}+D'^r_{v'_2})+(\rho(D')-\rho(D))\cdot \hat D.
\end{multline*}
But $h_*(D'^r_{v'_1}+D'^r_{v'_2})=h_*(D')=\hat D$, hence, $\rho(D')=\rho(D)-\pi_{D'}(v'_1)$.

3. Let $f\colon D\to D'$ be a third Reidemeister move applied to crossings $v_1,v_2,v_3\in\V(D)$, and $v'_1,v'_2,v'_3$ be the corresponding crossings in $D'$. Since $(D')^r_{v'_i}=D^r_{v_i}\in H_1(AD(D),A)$ and $D'=D$, we have
\begin{multline*}
  0 = h_*(\delta_{D'})-h_*(\delta_D) = \sum_{i=1}^3(\pi_{D'}(v'_i)-\pi_D(v_i))\cdot \hat D^r_{v_i}+(\rho(D')-\rho(D))\cdot \hat D=\\
  \sum_{i=1}^3 \lambda(f)\cdot\epsilon_\Delta(v_i)\cdot \hat D^r_{v_i}+(\rho(D')-\rho(D))\cdot \hat D=\\ \lambda(f)\cdot\sum_{i=1}^3\epsilon_\Delta(v_i)\cdot \hat D^r_{v_i}+(\rho(D')-\rho(D))\cdot \hat D.
\end{multline*}
The sum $\sum_{i=1}^3\epsilon_\Delta(v_i)\cdot \hat D^r_{v_i}=h_*\left(\sum_{i=1}^3\epsilon_\Delta(v_i)\cdot D^r_{v_i}\right)$ is equal to $h_*(k(f)\cdot D)=k(f)\cdot \hat D$ (see Fig.~\ref{fig:r3_types}). Hence, $\rho(D')=\rho(D)-\lambda(f)\cdot k(f)$.
\end{proof}

\begin{remark}
Proposition~\ref{prop:reminder_reidemeister} means that the reminder $\rho$ and hence the parity cycle $\delta$ are determined by the quasi-index $\pi$ up to constant summand $c\cdot D$. We can eliminate the ambiguity if we set the value $\rho(D)$ for an arbitrary diagram $D$.
\end{remark}

Let us give an example of a quasi-index which is not an index.

\begin{example}\label{exa:quasiindex_example}
Consider the biquandle $B=\{1,2,3\}$ with the operations given by the matrices
\[
\circ=\left(\begin{array}{ccc}
              1 & 1 & 1 \\
              3 & 3 & 2 \\
              2 & 2 & 2
            \end{array}\right),\qquad
\ast=\left(\begin{array}{ccc}
              1 & 2 & 3 \\
              2 & 3 & 1 \\
              3 & 1 & 2
            \end{array}\right)
\]
and the 1-cocycle $\theta\in H^1(B,\Z_3)$ such that $\theta(1)=0$, $\theta(2)=1$ and $\theta(3)=-1$. The biquandle $B$ is isomorphic to the biquandle $\Z_3$ with $x\circ y = -x$, $x\ast y= x+y$ and $\theta(x)=x$ for $x,y\in\Z_3$.

Consider the coloring of the unknot diagram $D$ with $B$ given in Fig.~\ref{fig:quasiindex_example} left. Let $\delta=\delta^\theta_{D,c}$ be the corresponding locally invariant cycle. Since $D$ is an unknot diagram, the cycle $\delta$ is normalized. Let $\pi$ be the quasi-index of the cycle $\delta$. Then the quasi-indices of the crossings participating in the third Reidemeister move $f$ in Fig.~\ref{fig:quasiindex_example} change from $\theta(1)-\theta(2)=2$, $\theta(2)-\theta(3)=2$, $\theta(1)-\theta(2)=2$ to $\theta(1)-\theta(3)=1$, $\theta(1)-\theta(1)=0$, $\theta(1)-\theta(3)=1$. Then $\lambda(f)=1\ne 0$. Thus, $\pi$ is not an index.

\begin{figure}[h]
\centering\includegraphics[width=0.5\textwidth]{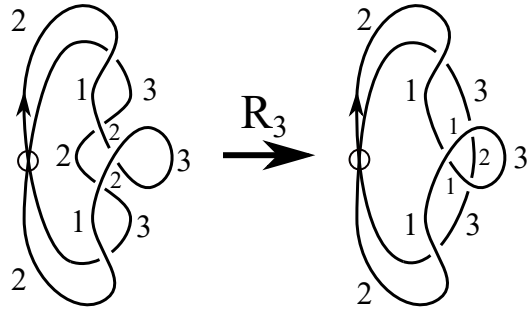}
\caption{Colouring changes by a third Reidemeister move}\label{fig:quasiindex_example}
\end{figure}

Note that this example is not quite correct because $\delta$ is not invariant due to the colouring monodromy and does not define a parity. Figure~\ref{fig:colouring_monodromy} implies that one can interchange the colours $2$ and $3$ in any diagram of the unknot but the cocycle $\theta$ is not invariant under this permutation.

\begin{figure}[h]
\centering\includegraphics[width=0.5\textwidth]{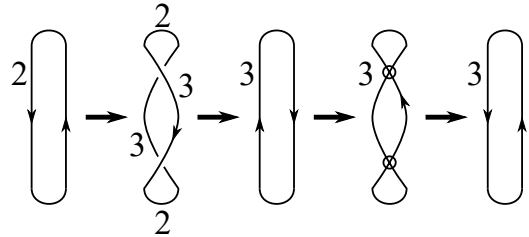}
\caption{Colouring monodromy}\label{fig:colouring_monodromy}
\end{figure}

The biquandle $B$ with the cocycle $\theta$ can define a parity with an invariant cycle whose quasi-index is not an index, for a knot $\mathcal K$ which has a colouring with $B$ such that it has an edge coloured with $2$ (or $3$) and its orbit under the colouring monodromy group $Mon_B(\mathcal K)$ is trivial: for such a knot we take its connected sum with the unknot diagram in Fig.~\ref{fig:quasiindex_example}.  Such a knot could be found among irreducible odd knots in the sense of~\cite{M2} but the author do not know an explicit example.
\end{example}

\begin{remark}
Let $\mathcal K$ be a virtual knot, $B$ be a biquandle and $\theta\in H^1(B,A)$ be a cocycle which defines an oriented parity $p$ with coefficients in $A$ on the diagrams of the knot $\mathcal K$. Then the following conditions ensure that the quasi-index $\pi$ of the parity $p$ is an index on the diagrams of $\mathcal K$: for all $x,y,z\in B$
\begin{align}\label{eq:index_conditions}
\theta(x)-\theta(x\circ y)-\theta(x\circ z)+\theta\left((x\circ y)\circ(z\circ y)\right)=0,\nonumber\\
\theta(x)-\theta(x\circ y)-\theta(x\ast z)+\theta\left((x\circ y)\ast(z\circ y)\right)=0,\\
\theta(x)-\theta(x\ast y)-\theta(x\ast z)+\theta\left((x\ast y)\ast(z\ast y)\right)=0.\nonumber
\end{align}
The conditions above come from the equations $\pi_{D'}(v'_i)-\pi_D(v_i)=0$ for the crossings $v_i$ participating in third Reidemeister moves.
\end{remark}

Now, let $\pi$ be an arbitrary quasi-index on the diagrams of a virtual knot $\mathcal K$ with coefficients in an abelian group $A$. Fix a diagram $D$ of the knot $\mathcal K$ and an element $\rho(D)\in A$. Using the formulas of Proposition~\ref{prop:reminder_reidemeister}, we can extend the reminder $\rho$ to other diagrams of $\mathcal K$, and define the cycle $\delta^\pi=\delta$ by the formula~\eqref{eq:cycle_quasiindex}. There are two obstacles for $\delta^\pi$ to be a normalized invariant cycle and define a parity. The first one is the monodromy.

Let $f\colon D\to D$ be a diagram morphism, i.e. a sequence
\[
D\stackrel{f_1}{\rightarrow}D_1 \stackrel{f_2}{\rightarrow}\cdots\stackrel{f_{n-1}}{\rightarrow}D_{n-1}\stackrel{f_n}{\rightarrow}D
\]
 of Reidemeister moves $f_1,\dots,f_n$. Define the \emph{monodromy} $\Delta_\pi(f)$ of the morphism $f$ as the sum
\begin{equation*}
  \Delta_\pi(f)=\sum_{i=1}^n \Delta_\pi(f_i)
\end{equation*}
where $\Delta_\pi(f_i)=\rho(D_{i+1})-\rho(D_i)$ is determined by Proposition~\ref{prop:reminder_reidemeister}. The set
\[
Mon(\pi)=\left\{ \Delta_\pi(f) \mid f\colon D\to D \right\}
\]
is called the \emph{monodromy group of the quasi-index} $\pi$. Note that $Mon(\pi)$ is a subgroup in $A$ and it does not depend on the choice of the diagram $D$ of the knot $\mathcal K$.

If the monodromy group is not zero then the cycle $\delta^\pi$ is ambiguous because Reidemeister moves can shift the reminder $\rho(D)$ by any element of $Mon(\pi)$.

\begin{example}
Consider the constant quasi-index $\pi\equiv 1$ with coefficients in $\Z$ on the unknot diagrams. Take the diagram $D$ in Fig.~\ref{fig:quasiindex_monodromy} and set $\rho(D)=0$. Consider the morphism $f\colon D\to D$ which consists of two first, one second Reidemeister move and detour moves (Fig.~\ref{fig:quasiindex_monodromy}). Then $\Delta_\pi(f)=1$. Hence, the monodromy group $Mon(\pi)$ coincides with the coefficient group $\Z$.

\begin{figure}[h]
\centering\includegraphics[width=0.5\textwidth]{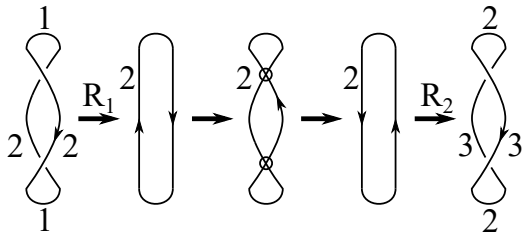}
\caption{Quasi-index monodromy}\label{fig:quasiindex_monodromy}
\end{figure}
\end{example}

The other obstacle for $\delta^\pi$ is the signature of the quasi-index.

\begin{definition}\label{def:quasiindex_signature}
Let $\pi$ be an quasi-index on the diagrams of a virtual knot $\mathcal K$ with coefficients in an abelian group $A$. Let $D$ be a diagram of $\mathcal K$. Then the \emph{signature $\sigma(\pi)$ of the quasi-index $\pi$} is defined by the formula
\begin{equation}\label{eq:quasiindex_signature}
  \sigma(\pi)= \sum_{v\in\V(D)}\pi_D(v)\cdot(D\cdot D^r_{v}) = -\sum_{v\in\V(D)}\pi_D(v)\cdot ip_D(v)\in A.
\end{equation}
Note that $\sigma(\pi)=D\cdot\delta_D$ where $\delta$ is defined by formula~\eqref{eq:cycle_quasiindex} with any choice of $\rho(D)$.
\end{definition}

\begin{proposition}\label{prop:quasiindex_signature_invariance}
Let $\pi$ be an quasi-index on the diagrams of a virtual knot $\mathcal K$ with coefficients in an abelian group $A$. Then the signature $\sigma(\pi)$ is invariant under Reidemeister moves.
\end{proposition}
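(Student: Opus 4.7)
The plan is to exploit the identity $\sigma(\pi)=D\cdot\delta_D$ from the remark preceding the statement, where $\delta_D$ is built from $\pi$ and an arbitrarily chosen remainder $\rho(D)\in A$ via~\eqref{eq:cycle_quasiindex}. Before proceeding I would first note that the value $D\cdot\delta_D$ does not depend on the choice of $\rho(D)$: altering $\rho(D)$ by $c\in A$ adds $c\cdot D$ to $\delta_D$, and hence adds $c\cdot(D\cdot D)$ to the signature, which vanishes because the intersection pairing on the oriented surface $AD(D)$ is skew-symmetric, so $D\cdot D=0$. This freedom is essential below.

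Fix a Reidemeister move $f\colon D\to D'$. I would pick an arbitrary $\rho(D)$ and then define $\rho(D')$ using the matching formulas of Proposition~\ref{prop:reminder_reidemeister}. The proof of that proposition actually establishes more than the stated identity: with this matched choice, the contraction map $h\colon AD(D')\to AD(D')$ that collapses the Reidemeister region to a point satisfies
\[
h_*(\delta_D)=h_*(\delta_{D'})\in H_1(AD(D'),A),
\]
while evidently $h_*(D)=h_*(D')=\hat D$ since $D$ and $D'$ agree outside the collapsed region.

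Since $h$ is homotopic to the identity on $AD(D')$, the intersection number descends to homology and is preserved under $h_*$. Therefore
\[
\sigma(\pi)\bigr|_D=D\cdot\delta_D=h_*(D)\cdot h_*(\delta_D)=h_*(D')\cdot h_*(\delta_{D'})=D'\cdot\delta_{D'}=\sigma(\pi)\bigr|_{D'},
\]
which is the required invariance of the signature under any Reidemeister move.

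The bulk of the work is thus already packaged inside Proposition~\ref{prop:reminder_reidemeister} and the contraction-homotopy argument used in Theorem~\ref{thm:parity_cycle_inverse}. The only genuinely new observation needed here is that the ambiguity in the remainder $\rho$ is swallowed by the vanishing $D\cdot D=0$; this is what lets us pass from the local compatibility of $\delta_D$ and $\delta_{D'}$ under $h_*$ (which is all Proposition~\ref{prop:reminder_reidemeister} guarantees) to the global equality of the two signatures. I do not anticipate any real obstacle beyond making this compatibility explicit.
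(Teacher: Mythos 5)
Your argument is correct and is essentially the paper's own proof: both reduce the claim to Proposition~\ref{prop:reminder_reidemeister}, which shows that under a Reidemeister move the cycle built from $\pi$ changes only by a multiple of $D$ (equivalently, that a matched choice of $\rho$ makes $h_*(\delta_D)=h_*(\delta_{D'})$), and then kill that discrepancy with $D\cdot D=0$. The paper simply phrases this as $\sigma_{D'}(\pi)-\sigma_D(\pi)=c(f)\cdot(D\cdot D)=0$ without introducing the remainders at all, which is marginally shorter but not a different idea.
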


\begin{proof}
  Let $f\colon D\to D'$ be a Reidemeister move between two diagrams of the knot $\mathcal K$. Let  $\sigma_D(\pi)$ and $\sigma_{D'}(\pi)$ be the signatures calculated for the diagrams $D$ and $D'$ by formula~\eqref{eq:quasiindex_signature}. Lift the diagrams $D$ and $D'$ to subsgraphs in the abstract knot diagram $AD(D)$ (or $AD(D')$). By Proposition~\ref{prop:reminder_reidemeister}
  \[
  \sum_{v\in\V(D)}\pi_D(v)\cdot D^r_{v}-\sum_{v'\in\V(D')}\pi_{D'}(v')\cdot D'^r_{v'}=c(f)\cdot D\in H_1(AD(D),A)
  \]
for some $c(f)\in A$. Then $\sigma_{D'}(\pi)-\sigma_{D}(\pi)=c(f)\cdot (D\cdot D)=0$
\end{proof}

Thus, if the signature $\sigma(\pi)$ of the quasi-index $\pi$ is not zero then the cycle $\delta^\pi$ is not normalized. But we make it normalized like in Corollary~\ref{cor:cycle_normalization}. Thus, we have

\begin{theorem}\label{thm:quasiindex_parity}
Let $\pi$ be an quasi-index on the diagrams of a virtual knot $\mathcal K$ with coefficients in an abelian group $A$. Let $\sigma(\pi)$ and $Mon(\pi)$ be the signature and the monodromy group of $\pi$. Consider the factor-group
\[
\bar A = A/(Mon(\pi)+\left<\sigma(\pi)\right>).
\]
 Then the formula
\begin{equation}\label{eq:quasiindex_parity}
p^\pi_D(v)=\sum_{v'\in\V(D)}\pi_D(v')\cdot(D^l_v\cdot D^r_{v'}).
\end{equation}
defines an oriented parity $p^\pi$ with coefficients in $\bar A$ on the diagrams of the knot $\mathcal K$.
\end{theorem}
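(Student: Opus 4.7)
The strategy is to produce an invariant normalized 1-cycle with coefficients in $\bar A$ and apply Theorem~\ref{thm:parity_cycle_inverse}. Take the candidate cycle from Theorem~\ref{thm:quasiindex}(2),
\[
\delta^\pi_D=\sum_{v\in\V(D)}\pi_D(v)\cdot D^r_v+\rho(D)\cdot D,
\]
which is automatically a $1$-cycle since each $D^r_v$ and the full diagram $D$ are cycles in $H_1(\tilde D,A)$. The whole proof reduces to choosing $\rho$ coherently and quotienting by the resulting ambiguities.

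Fix any base diagram $D_0\in\mathfrak K$ and an arbitrary starting value $\rho(D_0)\in A$; propagate $\rho$ to every diagram using the three formulas of Proposition~\ref{prop:reminder_reidemeister}. Two different Reidemeister paths from $D_0$ to the same $D$ differ by a loop $D_0\to D_0$, whose accumulated reminder-change is by definition an element of $Mon(\pi)$, so $\rho(D)$ is well defined in $A/Mon(\pi)$. Running the argument of Proposition~\ref{prop:reminder_reidemeister} in reverse, these prescriptions for $\rho(D')-\rho(D)$ are exactly what is required, after pushing everything into a common abstract surface via the contraction map $h$ of Theorem~\ref{thm:parity_cycle_inverse}, to make $h_*(\delta^\pi_D)=h_*(\delta^\pi_{D'})$ move by move. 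Hence $\{\delta^\pi_D\}$ is invariant modulo $Mon(\pi)\cdot D$.

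By Definition~\ref{def:quasiindex_signature}, $D\cdot\delta^\pi_D=\sigma(\pi)$, and this signature is a knot invariant by Proposition~\ref{prop:quasiindex_signature_invariance}. Reducing coefficients to $\bar A=A/(Mon(\pi)+\langle\sigma(\pi)\rangle)$ therefore kills both the $\rho$-ambiguity and the signature simultaneously, so the reduced family $\bar\delta^\pi_D\in H_1(\tilde D,\bar A)$ is a well-defined invariant normalized cycle. Theorem~\ref{thm:parity_cycle_inverse} then produces the oriented parity $p^\pi_D(v)=D^l_v\cdot\bar\delta^\pi_D$ with coefficients in $\bar A$. Expanding with $D^l_v\cdot D=-ip_D(v)$ gives
\[
p^\pi_D(v)=\sum_{v'\in\V(D)}\pi_D(v')(D^l_v\cdot D^r_{v'})-\rho(D)\cdot ip_D(v),
\]
which matches formula~\eqref{eq:quasiindex_parity} up to a multiple of the index parity; since the index parity is itself a parity, this correction can be absorbed via the free choice of $\rho(D_0)$.

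The main obstacle is the invariance step: verifying that the three asymmetric-looking formulas in Proposition~\ref{prop:reminder_reidemeister} really do compensate, move by move, the changes in $\sum_v\pi_D(v)\cdot D^r_v$ under each Reidemeister move. This requires the homological identities among left halves, right halves, and the whole diagram cycle used in Theorem~\ref{thm:parity_cycle_inverse}, together with a careful treatment of the incidence signs $\epsilon_\Delta$ in the R3 case and of the two loop types in R1; it is essentially the mirror image of the argument that established Proposition~\ref{prop:reminder_reidemeister} in the first place.
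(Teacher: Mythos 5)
Your overall strategy is the same as the paper's: build the cycle $\delta^\pi_D=\sum_{v}\pi_D(v)\cdot D^r_v+\rho(D)\cdot D$ by propagating the reminder $\rho$ with Proposition~\ref{prop:reminder_reidemeister}, observe that the path-dependence of $\rho$ is measured by $Mon(\pi)$ and the failure of normalization by $\sigma(\pi)$, pass to $\bar A=A/(Mon(\pi)+\langle\sigma(\pi)\rangle)$, and invoke Theorem~\ref{thm:parity_cycle_inverse}. Up to that point your argument is sound and matches the paper's (very terse) justification.

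The gap is in your last step. Theorem~\ref{thm:parity_cycle_inverse} gives you the parity $D^l_v\cdot\delta^\pi_D=\sum_{v'}\pi_D(v')\cdot(D^l_v\cdot D^r_{v'})-\rho(D)\cdot ip_D(v)$, i.e.\ exactly the expression of Corollary~\ref{quasiindex_to_parity}, and the discrepancy with formula~\eqref{eq:quasiindex_parity} is the term $\rho(D)\cdot ip_D(v)$, in which $\rho(D)$ depends on the diagram. Changing the free initial value $\rho(D_0)$ shifts $\rho(D)$ by one and the same constant $c$ for every diagram, hence changes the parity by the constant multiple $c\cdot ip$ of the index parity; it cannot annihilate the diagram-dependent function $D\mapsto\rho(D)$, whose increment under a single (non-loop) Reidemeister move is $\Delta_\pi(f)$ and need not lie in $Mon(\pi)+\langle\sigma(\pi)\rangle$. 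Consequently $\rho(D)\cdot ip_D(v)$ is not itself a parity --- it violates (P0) whenever $\Delta_\pi(f)\cdot ip_D(v)\ne 0$ in $\bar A$ --- and ``absorbing'' it by the choice of $\rho(D_0)$ does not work. What your argument actually proves is that the corrected formula $\sum_{v'}\pi_D(v')\cdot(D^l_v\cdot D^r_{v'})-\rho(D)\cdot ip_D(v)$ defines an oriented parity with coefficients in $\bar A$; to obtain the literal formula~\eqref{eq:quasiindex_parity} you would additionally need every one-move increment $\Delta_\pi(f)$ to lie in $Mon(\pi)+\langle\sigma(\pi)\rangle$ (so that $\rho$ is constant in $\bar A$ and may be set to $0$), which you have not established. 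To be fair, the paper's statement also omits the $\rho$-term while its Corollary~\ref{quasiindex_to_parity} keeps it, so you should either supply the missing argument or state the parity with the correction term.
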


On the other hand, any quasi-index $\pi$ defines an oriented \emph{parity functor} with coefficients in $A\oplus\Z$ (see Theorem~\ref{thm:quasiindex_parity_functor}).

We postpone the study of quasi-indices to further papers and concentrate below only on indices.

\section{From index to parity cycle}\label{sect:index_to_parity}

By Definition~\ref{def:quasiindex}, we can define an index $\pi$ on the diagrams of of a virtual knot $\K$ with coefficients in an abelian group $A$ as a family of maps $\pi_D\colon\V(D)\to A$, $D\in\mathfrak K$, such that
\begin{itemize}
\item[(I0)] for any Reidemeister move $f\colon D\to D'$ and any crossings $v\in\V(D)$ and $v'\in\V(D')$ such that $v'=f_*(v)$, one has $\pi_{D}(v)=\pi_{D'}(v')$;
\item[(I2)] $\pi_D(v_1)=\pi_D(v_2)$ for any crossings $v_1,\,v_2\in\V(D)$ to which a decreasing second Reidemeister move can be applied.
\end{itemize}

As in the previous section, our goal is to construct an invariant 1-cycle $\delta$ and a parity for a given index $\pi$.

Let  $\pi$ be an index on the diagrams of of a virtual knot $\K$ with coefficients in an abelian group $A$. Let $D$ be a diagram of the knot $\K$. Define a $1$-cycle $\tilde\delta^\pi_D\in H_1(D,A)$ by the formula
\begin{equation}\label{eq:quasiindex_cycle_original}
    \tilde\delta^\pi_D=\sum_{v\in\V(D)}\pi_D(v)\cdot D^r_v.
\end{equation}

By property (I0), the 1-cycle $\delta^\pi$ is invariant under third Reidemeister moves.

Let $f\colon D\to D'$ be an increasing second Reidemeister move and $v'_+,v'_-\in\V(D')$, $sgn(v'_\pm)=\pm 1$, be the new crossings. By property (I2), $\pi_{D'}(v'_+)=\pi_{D'}(v'_-)$, and Proposition~\ref{prop:reminder_reidemeister} implies that
\[
\tilde\delta^\pi_{D'}-\tilde\delta_D^\pi=\pi_{D'}(v'_-)\cdot D
\]
in $H_1(AD(D'),A)$. In order to compensate the increment of the cycle $\tilde\delta^\pi$, we modify the formula~\eqref{eq:quasiindex_cycle_original}.

Let $\V(D)_-$ and $\V(D)_+$ be the sets of negative and positive crossing in $D$.  For any crossing $v\in\V(D)$ define its \emph{signed halves} $D^\pm_v$ by the formulas $D^l_c=D^{sign(v)}_v$, $D^r_c=D^{-sign(v)}_v$.

Consider the 1-cycle
\begin{multline*}
\delta^\pi_D=\tilde\delta^\pi_D-\sum_{v\in\V(D)_-}\pi_D(v)\cdot D =\\
 \sum_{v\in\V(D)_+}\pi_D(v)\cdot D^r_v+\sum_{v\in\V(D)_-}\pi_D(v)\cdot (D^r_v-D)=\\
\sum_{v\in\V(D)_+}\pi_D(v)\cdot D^r_v-\sum_{v\in\V(D)_-}\pi_D(v)\cdot D^l_v.
\end{multline*}
Thus,
\begin{equation}\label{eq:quasiindex_cycle_base}
\delta^\pi_D=\sum_{v\in\V(D)} sgn(v)\pi_D(v)\cdot D^-_v.
\end{equation}

By definition of $\delta^\pi$, the following proposition holds.

\begin{proposition}\label{prop:index_delta_r23_invariance}
 The cycle $\delta^\pi$ is invariant under second and third Reidemeister moves.
\end{proposition}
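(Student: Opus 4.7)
The plan is to verify local invariance in the sense of Definition~\ref{def:invariant_normalized_cycles}, comparing the coefficient that $\delta^\pi$ assigns to correspondent arcs of $D$ and $D'$.

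For a third Reidemeister move $f\colon D\to D'$, the three participating crossings $v_1,v_2,v_3$ retain their signs (R3 is sign-preserving) and, by (I0), their indices; hence $\sign(v)\pi_D(v)$ is preserved for every $v\in\V(D)$. For a correspondent pair of arcs $a\leftrightarrow a'$ (which necessarily lies outside the triangle), I would verify that $a\in D^-_v$ if and only if $a'\in {D'}^-_{f_*(v)}$ for every $v$: for crossings outside the triangle this is immediate, and for $v\in\{v_1,v_2,v_3\}$ it is a finite local check on the R3 configurations — the Gauss chord endpoints of $v_1,v_2,v_3$ only permute inside a neighbourhood of the triangle, so the partition of outside arcs by any $v_i$-cut is unaffected. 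Summing $\sign(v)\pi_D(v)$ over $v$ therefore yields equal coefficients for $a$ and $a'$ in $\delta^\pi$.

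For an increasing second Reidemeister move $f\colon D\to D'$ creating crossings $v'_+$ and $v'_-$ with $\sign(v'_\pm)=\pm 1$, property (I2) gives $\pi_{D'}(v'_+)=\pi_{D'}(v'_-)=:\mu$. For every old crossing $v\in\V(D)$ (which corresponds to a crossing of $\V(D')\setminus\{v'_+,v'_-\}$ with preserved sign and index), the contributions to $\delta^\pi_D(a)$ and $\delta^\pi_{D'}(a')$ match on a correspondent pair $a\leftrightarrow a'$, because the arc structure outside the R2 region is untouched. So on such a pair the difference collapses to the contributions of the two new crossings,
\[
\delta^\pi_{D'}(a')-\delta^\pi_D(a)=\mu\bigl([a'\in {D'}^r_{v'_+}]-[a'\in {D'}^l_{v'_-}]\bigr).
\]
The proof now reduces to the geometric fact that, in the R2 configuration and for every arc $a'$ outside the bigon, $[a'\in {D'}^r_{v'_+}]=[a'\in {D'}^l_{v'_-}]$: tracing the Gauss circle shows that the right half at $v'_+$ and the left half at $v'_-$ traverse the same outer loop of the knot, so they contain the same outside arcs. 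Decreasing R2 moves follow by applying the inverse.

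The main obstacle I anticipate is this last geometric claim: one must verify case by case (orientations of the two strands, which one lies over) that it is precisely the \emph{right} half at the positive crossing and the \emph{left} half at the negative one — and not some other pairing — that agree on outside arcs. This is where the sign convention $\sign(v)\,D^-_v$ of definition \eqref{eq:quasiindex_cycle_base} earns its keep; once one R2 configuration is checked, the others follow by reflection or orientation reversal. The R3 step is, by contrast, essentially combinatorial: it is a bookkeeping check on a finite list of local pictures.
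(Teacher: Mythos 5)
Your argument is correct and takes essentially the same route as the paper, which asserts the proposition ``by definition of $\delta^\pi$'' after noting R3-invariance via (I0) and computing the R2 increment of $\tilde\delta^\pi$ through Proposition~\ref{prop:reminder_reidemeister}. The geometric fact you flag as the main obstacle is exactly the identity $D'^r_{v'_+}+D'^r_{v'_-}=D'$ on arcs outside the bigon (Fig.~\ref{fig:half_r2}), which the paper already establishes and uses in Theorem~\ref{thm:parity_cycle_inverse} and Proposition~\ref{prop:reminder_reidemeister}; it immediately gives $[a'\in D'^r_{v'_+}]=[a'\in D'^l_{v'_-}]$ there.
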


\begin{remark}
We can consider the cycle $\delta'^\pi_D=\tilde\delta^\pi-\sum_{v\in\V(D)_+}\pi_D(v)\cdot D$ which is also invariant under second and third Reidemeister moves. The difference between $\delta^\pi$ and $\delta'^\pi$
\[
\delta^\pi-\delta'^\pi=\left(\sum_{v\in\V(D)_+}\pi_D(v)-\sum_{v\in\V(D)_-}\pi_D(v)\right)\cdot D= lk(\pi)\cdot D
\]
is expressed using the linking invariant $lk(\pi)$ of the index $\pi$ (see Section~\ref{sect:derived_parity}).
\end{remark}

Let us consider an increasing first Reidemeister move $f\colon D\to D'$ and let $v'\in\V(D')$ be the new crossing. There can be four types of the crossing $v'$, see Fig.~\ref{fig:loop_types}. By property (I0), there are elements $\pi^{l-}, \pi^{l+}, \pi^{r-}, \pi^{r+}\in A$ which are the index values of the corresponding types of loop crossings. Due to Whitney's trick and property (I2), $\pi^{l-}=\pi^{r+}=\pi^\bullet$ and $\pi^{l+}=\pi^{r-}=\pi^\circ$. Thus, the index $\pi_{D'}(v')$ coincides with one of two values $\pi^\bullet$ or $\pi^\circ$.

The increment of the 1-cycle $\delta^\pi$ for the move $f$ is equal to
\[
\delta^\pi_{D'}-\delta_D^\pi=k\cdot\pi_{D'}(v')\cdot D
\]
where $k=-1$ if $v'$ is of type $r_-$, $k=0$ for $r_+$ and $l_-$, and $k=1$ for the type $l_+$. Hence,
\begin{equation}\label{eq:index_delta_r1}
  \delta^\pi_{D'}-\delta_D^\pi=\left\{\begin{array}{cl}
                                        sgn(v')\pi^\circ\cdot D, & v'\mbox{ is of type $r_-$ or $l_+$}, \\
                                        0, & v'\mbox{ is of type $r_+$ or $l_-$}.
                                      \end{array}\right.
\end{equation}

We can consider several strategies to deal with first Reidemeister moves (some of them fit to quasi-index case):
\begin{itemize}
\item reduced index (Section~\ref{subsect:reduced_index}),
\item almost classical knots (Section~\ref{subsect:almost_classical_knots}),
\item regular knots (Section~\ref{subsect:regular_knots}),
\item rotational virtual knots (Section~\ref{subsect:rotational_knots}),
\item long knots (Section~\ref{sect:long_knots}),
\item parity functors (Section~\ref{sect:parity_functors}).
\end{itemize}

\subsection{Reduced index}\label{subsect:reduced_index}

By equation~\eqref{eq:index_delta_r1} the invariance under first Reidemeister move takes place automatically if we are lucky enough to have $\pi^\circ=0$.

\begin{definition}\label{def:index_reduced}
An index $\pi$ on diagrams of a virtual knot $\K$ with coefficients in an abelian group $A$ is \emph{$l_+$-reduced} if the index value of the crossings of types $l_+$ and $r_-$ (Fig.~\ref{fig:loop_types}) is zero: $\pi^\circ=0$.

Analogously, the index $\pi$ is \emph{$r_+$-reduced} if the index values of $l_-$- and $r_+$-crossings is zero: $\pi^\bullet=0$. And $\pi$ is \emph{R1-reduced} if $\pi^\circ=\pi^\bullet=0$.
\end{definition}

\begin{theorem}\label{thm:index_parity_reduced}
  Let $\pi$ be an $l_+$-reduced index on diagrams of a virtual knot $\K$ with coefficients in an abelian group $A$. Then the formula~\eqref{eq:quasiindex_cycle_base} defines an invariant 1-cycle $\delta^\pi_D$ and the formula
\begin{equation}\label{eq:index_parity_reduced}
p^\pi_D(v)=\sum_{v'\in\V(D)}sgn(v')\pi_D(v')\cdot(D^l_v\cdot D^-_{v'})
\end{equation}
defines an oriented parity $p^\pi$ on diagrams of the knot $\K$ with coefficients in the group $\bar A=A/\left<\sigma(\pi)\right>$, where $\sigma(\pi)\in A$ is the signature of the index $\pi$.
\end{theorem}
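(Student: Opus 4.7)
The plan is to deduce the theorem directly from results already established in the paper, using Proposition~\ref{prop:index_delta_r23_invariance} for the $R2$ and $R3$ invariance, the reducedness hypothesis to kill the $R1$ obstruction, and then Corollary~\ref{cor:cycle_normalization} together with Theorem~\ref{thm:parity_cycle_inverse} to produce the parity.

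First I would observe that, by Proposition~\ref{prop:index_delta_r23_invariance}, the $1$-chain $\delta^\pi_D$ defined by~\eqref{eq:quasiindex_cycle_base} is already invariant under second and third Reidemeister moves (as well as isotopies and detour moves, which preserve both the crossing set and the half-cycles up to the natural correspondence). So the only thing left to check for invariance is the first Reidemeister move. For an increasing move $f\colon D\to D'$ creating a new crossing $v'$, formula~\eqref{eq:index_delta_r1} gives
\[
\delta^\pi_{D'}-\delta^\pi_D=\begin{cases}sgn(v')\,\pi^\circ\cdot D, & v'\text{ of type } r_-\text{ or }l_+,\\ 0, & v'\text{ of type }r_+\text{ or }l_-.\end{cases}
\]
Since $\pi$ is $l_+$-reduced, $\pi^\circ=0$ by Definition~\ref{def:index_reduced}, so this increment vanishes in every case. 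Thus $\delta^\pi$ is an invariant $1$-cycle in the sense of Definition~\ref{def:invariant_normalized_cycles}.

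Next I would compute the signature $\sigma(\delta^\pi_D)=D\cdot\delta^\pi_D$. Using $D^l_v+D^r_v=D$ as cycles in the covering graph $\tilde D$, so $D\cdot D^l_v=-D\cdot D^r_v$, a single case distinction on $sgn(v)$ (which swaps $D^-_v$ between $D^r_v$ and $D^l_v$) yields
\[
D\cdot\delta^\pi_D=\sum_{v\in\V(D)} sgn(v)\,\pi_D(v)\cdot(D\cdot D^-_v)=\sum_{v\in\V(D)}\pi_D(v)\cdot(D\cdot D^r_v)=\sigma(\pi),
\]
matching Definition~\ref{def:quasiindex_signature}. Hence by Corollary~\ref{cor:cycle_normalization} the composition $\bar\delta^\pi=p\circ\delta^\pi$ with the projection $p\colon A\to\bar A=A/\langle\sigma(\pi)\rangle$ is an invariant normalized cycle on the diagrams of $\K$ with coefficients in $\bar A$.

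Finally I would apply Theorem~\ref{thm:parity_cycle_inverse} to the normalized invariant cycle $\bar\delta^\pi$; the resulting map $p^\pi_D(v)=D^l_v\cdot\bar\delta^\pi_D$ is an oriented parity, and unwinding the definition of $\delta^\pi$ gives exactly formula~\eqref{eq:index_parity_reduced}. The only mildly delicate point is the signature computation; everything else is a direct appeal to results already proved. No genuine obstacle remains, since the $l_+$-reducedness hypothesis was precisely designed to kill the $R1$ defect highlighted in~\eqref{eq:index_delta_r1}.
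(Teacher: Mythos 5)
Your proposal is correct and follows essentially the same route as the paper: the paper's own justification of Theorem~\ref{thm:index_parity_reduced} is exactly the preceding discussion (Proposition~\ref{prop:index_delta_r23_invariance} for $R2$/$R3$, formula~\eqref{eq:index_delta_r1} killed by $\pi^\circ=0$ for $R1$, then normalization via Corollary~\ref{cor:cycle_normalization} and the intersection formula of Theorem~\ref{thm:parity_cycle_inverse}), and your signature computation $D\cdot\delta^\pi_D=\sigma(\pi)$ agrees with the remark in Definition~\ref{def:quasiindex_signature} since $\delta^\pi_D$ differs from $\sum_v\pi_D(v)\cdot D^r_v$ by a multiple of $D$ and $D\cdot D=0$.
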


For a general index $\pi$, we can either suppress or evade the index value $\pi^\circ$. The following statement (cf.~Theorem~\ref{thm:quasiindex_parity}) represents the first option.
\begin{corollary}
Let $\pi$ be an index on diagrams of a virtual knot $\K$ with coefficients in an abelian group $A$ and let $\pi^\circ\in A$ be the index value of the crossings of types $l_+$ and $r_-$. Then the formula~\eqref{eq:quasiindex_cycle_base} defines an invariant 1-cycle $\delta^\pi_D$ and the formula~\eqref{eq:index_parity_reduced} defines an oriented parity $p^\pi$ on diagrams of the knot $\K$ with coefficients in the group $\bar A=A/\left<\pi^\circ,\sigma(\pi)\right>$, where $\sigma(\pi)$ is the signature of the index $\pi$.
\end{corollary}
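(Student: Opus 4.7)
The natural strategy is to reduce the statement to Theorem~\ref{thm:index_parity_reduced} by forcing the obstruction $\pi^\circ$ to vanish at the level of coefficients. Let $A_1=A/\langle\pi^\circ\rangle$ and write $q\colon A\to A_1$ for the projection. I would define $\bar\pi=q\circ\pi$, i.e. the family of maps $\bar\pi_D=q\circ\pi_D\colon\V(D)\to A_1$. Properties (I0) and (I2) are invariant under post-composition with any group homomorphism, so $\bar\pi$ is an index on the diagrams of $\K$ with coefficients in $A_1$. Moreover, the loop values of $\bar\pi$ satisfy $\bar\pi^\circ=q(\pi^\circ)=0$, so $\bar\pi$ is $l_+$-reduced in the sense of Definition~\ref{def:index_reduced}.

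Applying Theorem~\ref{thm:index_parity_reduced} to $\bar\pi$ then yields an invariant 1-cycle $\delta^{\bar\pi}_D$ defined by formula~\eqref{eq:quasiindex_cycle_base} and an oriented parity $p^{\bar\pi}$ defined by~\eqref{eq:index_parity_reduced}, with coefficients in $A_1/\langle\sigma(\bar\pi)\rangle$. Since the signature formula~\eqref{eq:quasiindex_signature} is $A$-linear in $\pi_D$, it commutes with $q$, so $\sigma(\bar\pi)=q(\sigma(\pi))$; consequently
\[
A_1/\langle\sigma(\bar\pi)\rangle=(A/\langle\pi^\circ\rangle)/\langle q(\sigma(\pi))\rangle\simeq A/\langle\pi^\circ,\sigma(\pi)\rangle=\bar A.
\]
Under this identification, the formulas~\eqref{eq:quasiindex_cycle_base} and~\eqref{eq:index_parity_reduced}, being $A$-linear in $\pi_D$, compute $\delta^{\bar\pi}_D$ and $p^{\bar\pi}_D$ as the images in $C_1(D,\bar A)$ and $\bar A$ of the corresponding $A$-valued expressions built directly from $\pi$.

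There is no serious obstacle here; the proof is essentially a bookkeeping exercise. After quotienting by $\pi^\circ$, equation~\eqref{eq:index_delta_r1} shows that the R1-increment of $\delta^\pi$ becomes zero, Proposition~\ref{prop:index_delta_r23_invariance} already handles R2 and R3, and Corollary~\ref{cor:cycle_normalization} then passes to the further quotient by the signature to deliver a normalized invariant cycle; Theorem~\ref{thm:parity_cycle_inverse} converts this into the required parity. The only thing one must be careful about is the naturality of the intersection pairing and of the signature in the coefficient group, which ensures that reduction modulo $\langle\pi^\circ,\sigma(\pi)\rangle$ is compatible with all of the formulas being invoked; this is routine.
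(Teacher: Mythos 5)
Your proposal is correct and follows essentially the route the paper intends: the corollary is stated as an immediate consequence of Theorem~\ref{thm:index_parity_reduced} obtained by ``suppressing'' $\pi^\circ$, i.e.\ passing to $A/\langle\pi^\circ\rangle$ so that the induced index becomes $l_+$-reduced, and your verification that the signature and the defining formulas commute with the coefficient reduction is exactly the bookkeeping needed to identify the resulting coefficient group with $A/\langle\pi^\circ,\sigma(\pi)\rangle$.
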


\begin{definition}\label{def:index_lplus_reduction}
Given an index $\pi$ on diagrams of a virtual knot $\K$ with coefficients in an abelian group $A$, define its \emph{$l_+$-reduction} as the a family of maps $\bar\pi_D\colon\V(D)\to A$ ,$D\in\mathfrak K$, set by the formula
\begin{equation*}
  \bar\pi_D(v)=\left\{\begin{array}{cl}
                        \pi_D(v), & \pi_D(v)\ne\pi^\circ, \\
                        0, & \pi_D(v)=\pi^\circ.
                      \end{array}\right.
\end{equation*}
Here $\pi^\circ\in A$ is the index value of the crossings of types $l_+$ and $r_-$.
\end{definition}

The following statement is evident.
\begin{proposition}
The $l_+$-reduction $\bar\pi$ is an $l_+$-reduced index on diagrams of $\K$ with coefficients in $A$.
\end{proposition}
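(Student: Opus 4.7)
The plan is to verify the two defining axioms (I0) and (I2) of an index for the family $\bar\pi$, and then to observe that the $l_+$-reduction condition holds tautologically from the definition. The key observation that drives both verifications is that the property ``$\pi_D(v)=\pi^\circ$'' is itself preserved by the structural operations in an index: Reidemeister correspondences preserve $\pi$-values by (I0), and bigon pairs in a decreasing second Reidemeister move share their $\pi$-value by (I2). So the case split defining $\bar\pi$ (value $0$ versus value $\pi_D(v)$) is coherent across any such correspondence.

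For (I0), I would take a Reidemeister move $f\colon D\to D'$ and corresponding crossings $v, v'=f_*(v)$. By (I0) for $\pi$, $\pi_D(v)=\pi_{D'}(v')$. Then either both of these equal $\pi^\circ$, in which case $\bar\pi_D(v)=0=\bar\pi_{D'}(v')$ by definition, or neither does, in which case $\bar\pi_D(v)=\pi_D(v)=\pi_{D'}(v')=\bar\pi_{D'}(v')$. In either case the required equality holds. For (I2) the argument is identical: if $v_1, v_2$ are a pair of crossings to which a decreasing second Reidemeister move applies, then (I2) for $\pi$ gives $\pi_D(v_1)=\pi_D(v_2)$, and the same case split gives $\bar\pi_D(v_1)=\bar\pi_D(v_2)$.

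Finally, for the $l_+$-reduction property, I recall that (from the discussion preceding Definition~\ref{def:index_lplus_reduction}) $\pi^\circ$ is by construction the common $\pi$-value of every loop crossing of type $l_+$ or $r_-$; this common value is well-defined thanks to (I0) together with Whitney's trick and (I2). Hence if $v$ is any such loop crossing, $\pi_D(v)=\pi^\circ$, and so $\bar\pi_D(v)=0$ by the defining formula of $\bar\pi$.

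There is no genuine obstacle in this proof: the argument is a one-line case distinction together with an unwinding of the definition of $\pi^\circ$. The author's remark that the statement is ``evident'' is accurate; the only point worth flagging explicitly is that the coherence of the case split $\pi_D(v)\ne\pi^\circ$ versus $\pi_D(v)=\pi^\circ$ under Reidemeister moves relies precisely on axioms (I0) and (I2) for $\pi$, which is exactly why the reduction procedure is compatible with the index structure in the first place.
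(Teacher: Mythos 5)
Your proof is correct and is exactly the routine verification the paper omits by declaring the statement ``evident'': the case split $\pi_D(v)=\pi^\circ$ versus $\pi_D(v)\ne\pi^\circ$ is preserved under the correspondences in (I0) and (I2) because $\pi$ itself satisfies those axioms, and the $l_+$-reduced property is immediate since crossings of type $l_+$ and $r_-$ carry the value $\pi^\circ$ and hence are sent to $0$. Nothing further is needed.
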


Hence, Theorem~\ref{thm:index_parity_reduced} holds for $\bar\pi$. We can reformulate it as follows.

\begin{corollary}\label{cor:index_parity_reduced}
Let $\pi$ be an index on diagrams of a virtual knot $\K$ with coefficients in an abelian group $A$ and let $\pi^\circ\in A$ be the index value of the crossings of types $l_+$ and $r_-$. Then the formula
\begin{equation}\label{eq:index_reduction_delta}
\bar\delta^\pi_D=\sum_{v\in\V(D)\colon \pi_D(v)\ne\pi^\circ} sgn(v)\pi_D(v)\cdot D^-_v
\end{equation}
 defines an invariant 1-cycle $\bar\delta^\pi_D$ and the formula
\begin{equation}\label{eq:index_reduction_parity}
\bar p^\pi_D(v)=\sum_{v'\in\V(D)\colon \pi_D(v')\ne\pi^\circ}sgn(v')\pi_D(v')\cdot(D^l_v\cdot D^-_{v'})
\end{equation}
defines an oriented parity $\bar p^\pi$ on diagrams of the knot $\K$ with coefficients in the group $\bar A=A/\left<\bar\sigma(\pi)\right>$, where
$\bar\sigma(\pi)=\sigma(\bar\pi)$ is the signature of the $l_+$-reduction $\bar\pi$.
\end{corollary}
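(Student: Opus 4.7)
The plan is to reduce the Corollary immediately to Theorem~\ref{thm:index_parity_reduced} by noticing that the formulas~\eqref{eq:index_reduction_delta} and \eqref{eq:index_reduction_parity} are literally formulas \eqref{eq:quasiindex_cycle_base} and \eqref{eq:index_parity_reduced} written for the $l_+$-reduction $\bar\pi$ rather than for $\pi$ itself. Concretely, by Definition~\ref{def:index_lplus_reduction}, $\bar\pi_D(v)=0$ whenever $\pi_D(v)=\pi^\circ$, and $\bar\pi_D(v)=\pi_D(v)$ otherwise, so terms indexed by crossings with $\pi_D(v)=\pi^\circ$ drop out of the sums and we obtain
\[
\bar\delta^\pi_D=\sum_{v\in\V(D)}\sign(v)\,\bar\pi_D(v)\cdot D^-_v=\delta^{\bar\pi}_D,\qquad \bar p^\pi_D=p^{\bar\pi}_D.
\]

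First I would invoke the Proposition immediately preceding the Corollary: the $l_+$-reduction $\bar\pi$ is itself an $l_+$-reduced index on the diagrams of $\K$ with coefficients in $A$. (This proposition was stated as evident; its verification amounts to checking properties (I0) and (I2) for $\bar\pi$, using that these properties hold for $\pi$ and that the value $\pi^\circ$ is preserved under the moves by (I0).) Once $\bar\pi$ is known to be $l_+$-reduced, Theorem~\ref{thm:index_parity_reduced} applies directly to $\bar\pi$: the 1-chain $\delta^{\bar\pi}_D$ is invariant under all Reidemeister moves and the intersection formula \eqref{eq:index_parity_reduced} for $\bar\pi$ defines an oriented parity with coefficients in $A/\langle\sigma(\bar\pi)\rangle$.

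Finally I would identify coefficient groups: by definition $\bar\sigma(\pi)=\sigma(\bar\pi)$, so the target group of the parity produced by Theorem~\ref{thm:index_parity_reduced} is exactly $\bar A=A/\langle\bar\sigma(\pi)\rangle$. Combining this with the identifications $\bar\delta^\pi_D=\delta^{\bar\pi}_D$ and $\bar p^\pi_D=p^{\bar\pi}_D$ yields the Corollary.

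There is no real obstacle; the only mildly nontrivial step is the unstated verification that $\bar\pi$ inherits properties (I0) and (I2) from $\pi$, and that step is routine precisely because the exceptional value $\pi^\circ$ is intrinsic to the index (it is the value on the loop crossings of types $l_+$ and $r_-$, which by (I0) and Whitney's trick is well-defined independently of the diagram and the particular loop realisation), so the set $\{v:\pi_D(v)=\pi^\circ\}$ is transported correctly by correspondent crossings under Reidemeister moves.
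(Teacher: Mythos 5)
Your proof is correct and follows exactly the paper's own route: the paper obtains Corollary~\ref{cor:index_parity_reduced} by noting that the $l_+$-reduction $\bar\pi$ is an $l_+$-reduced index (the proposition stated as evident just before the corollary) and then applying Theorem~\ref{thm:index_parity_reduced} to $\bar\pi$, which is precisely your identification $\bar\delta^\pi_D=\delta^{\bar\pi}_D$, $\bar p^\pi_D=p^{\bar\pi}_D$ together with $\bar\sigma(\pi)=\sigma(\bar\pi)$. Your additional remark spelling out why $\bar\pi$ inherits (I0) and (I2) is a correct filling-in of the step the paper leaves as evident.
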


One can consider the reduced 1-cycle $\bar p^\pi$ is a distant analogue of odd writhe polynomial~\cite{Cheng}.

\begin{remark}
Analogous results take place when the index $\pi$ is \emph{$r_+$-reduced}, i.e. $\pi^\bullet=0$.
\end{remark}

We will return to the case of reduced indices in Section~\ref{sect:derived_parity}.

\subsection{Almost classical knots}\label{subsect:almost_classical_knots}

Let us recall the definition of almost classical knot~\cite{BGHNW}.

\begin{definition}\label{def:almost_classical_knot}
A virtual knot diagram $D$ is called \emph{almost classical} if for any crossing $v\in\V(D)$ its index parity is zero: $ip_D(v)=0$.
Equivalently, a diagram $D$ is almost classical iff $D=0\in H_1(AD(D),\Z)$.

A virtual knot $\K$ is {\em almost classical} if it has an almost classical diagram.
\end{definition}

The category of almost classical diagrams embeds in the category of virtual knot diagrams and Reidemeister moves between them in the following sense: if $D$ and $D'$ are almost classical diagrams of one knot $\K$ then there exist a sequence of Reidemeister moves $D\to D_1\to\cdots\to D_n\to D'$ such that all the intermediate diagrams $D_1,\dots,D_n$ are almost classical. This fact can be proved by using~\emph{parity projection}~\cite{IMN1}. Denote the set of almost classical diagrams of $\K$ by $\mathfrak K_{ac}$.

The condition $D=0$ in the definition of almost classical knots means we need not to worry about the invariance of the cycle $\delta^pi$. Theorem~\ref{thm:quasiindex_parity} and definition of almost classical diagrams imply the following result.

\begin{theorem}\label{thm:quasiindex_parity_almost_classical}
  Let $\K$ be an almost classical knots and $\pi$ be a quasi-index on {\em all} diagrams of $\K$ with coefficients in an abelian group $A$. Then the formula~\eqref{eq:quasiindex_cycle_original} defines an invariant normalized 1-cycle $\delta^\pi_D$ and the formula~\eqref{eq:quasiindex_parity} defines an oriented parity $p^\pi_D$, $D\in\mathfrak K_{ac}$, on almost classical diagrams of $\K$ with coefficients in the group $A$.
\end{theorem}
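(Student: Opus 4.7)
The plan is to verify that the $1$-chain $\tilde\delta^\pi_D=\sum_{v\in\V(D)}\pi_D(v)\cdot D^r_v$ is an invariant normalized cycle on the category $\mathfrak K_{ac}$ of almost classical diagrams, and then derive the parity via the intersection formula exactly as in Theorem~\ref{thm:parity_cycle_inverse}. Everything is streamlined by the defining homological property of almost classical diagrams, namely $[D]=0\in H_1(AD(D),\Z)$.

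I would first dispose of the easy items. Each right half $D^r_v$ is a closed cycle on the covering graph $\tilde D$, so $\tilde\delta^\pi_D$ is automatically a $1$-cycle. Normalization is instant, since
\[
D\cdot\tilde\delta^\pi_D=\sum_{v\in\V(D)}\pi_D(v)\,(D\cdot D^r_v)=-\sum_{v\in\V(D)}\pi_D(v)\cdot ip_D(v)=0,
\]
because $ip_D(v)=0$ for every crossing of an almost classical diagram. In particular the signature $\sigma(\pi)$ vanishes on $\mathfrak K_{ac}$, so there is no need to pass to a quotient of the coefficient group.

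For invariance I would restrict attention to Reidemeister moves between almost classical diagrams, which is legitimate because parity projection~\cite{IMN1} ensures that any two such diagrams can be connected through almost classical intermediates. For any such move $f\colon D\to D'$, the computations of Proposition~\ref{prop:reminder_reidemeister} show that, after lifting both sides to $AD(D')$ and applying the contraction map $h$ which collapses the region of the move, the increment $h_*(\tilde\delta^\pi_{D'})-h_*(\tilde\delta^\pi_D)$ equals $c(f)\cdot\hat D$ for an explicit element $c(f)\in A$: namely $\pi_{D'}(v'_0)\cdot k(f)$ for R1, $\pi_{D'}(v'_-)$ for increasing R2, and $\lambda(f)\cdot k(f)$ for R3. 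Since $[\hat D]=[D]=0$, every such increment vanishes in homology, so $[\tilde\delta^\pi_D]\in H_1(AD(D),A)$ is a well-defined invariant class; the monodromy ambiguity $Mon(\pi)\subset A$ from Theorem~\ref{thm:quasiindex_parity} is automatically killed because it only ever appears as a multiple of the now-vanishing class $[D]$.

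With an invariant normalized cycle in hand, I would verify the oriented parity axioms for $p^\pi_D(v)=D^l_v\cdot\tilde\delta^\pi_D$ by repeating the homological argument from Theorem~\ref{thm:parity_cycle_inverse} verbatim. Axiom (P0) is the invariance just proved. For (P1), $D^l_v$ is either contractible or homologous to $D$, and in both cases the intersection with $\tilde\delta^\pi_D$ is zero because $[D]=0$. Axioms (P2) and (P3+) follow because the appropriate signed sum of left halves at the crossings of the move is homologous to an integer multiple of $D$, so the weighted sum of parities reduces to a multiple of $D\cdot\tilde\delta^\pi_D=0$. The main obstacle is the bookkeeping between chain-level and homology-level invariance; the value of the almost classical hypothesis is precisely that it collapses the two, since every obstruction to chain-level invariance sits in the cyclic subgroup generated by $[D]$, which is trivial.
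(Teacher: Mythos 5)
Your argument is correct and is essentially the paper's own: the paper merely cites Theorem~\ref{thm:quasiindex_parity} together with the fact that $[D]=0\in H_1(AD(D),\Z)$ kills the signature $\sigma(\pi)$ and makes the monodromy/reminder term irrelevant, which is exactly what you unpack via Proposition~\ref{prop:reminder_reidemeister} and the homological argument of Theorem~\ref{thm:parity_cycle_inverse}. The one caveat is terminological: the increments of $\tilde\delta^\pi$ under Reidemeister moves are nonzero multiples of the \emph{chain} $D$, so the cycle is invariant only as a class in $H_1(AD(D),A)$ rather than arcwise in the sense of Definition~\ref{def:invariant_normalized_cycles} --- but, as you in effect observe, this homological invariance is all the intersection formula needs, and the paper is equally loose on this point.
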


\begin{example}
Let $\K$ be the virtual knot 5.2012 (Fig.~\ref{fig:knot_5.2012}). It is almost classical~\cite{BGHNW}. Consider the reduced stable parity on the knot~\cite{N3}. The based matrix $T$ of the diagram $D$ in Fig.~\ref{fig:knot_5.2012} is primitive. Each crossing of $D$ defines a nonzero primitive tribe. Taking into account the signs of crossings, there are no nontrivial symmetries of the diagram $D$ and the based matrix $T$. Then we can define an index $\pi$ with coefficients in $A=\Z^5$ such that the crossings of $D$ correspond to the basis of $A$. Then the parities $p^\pi_D(v)$, $v\in\V(D)$, correspond to the rows of the based matrix $T$ (without the first row and the first column).

\begin{figure}[h]
\centering
\raisebox{-0.7\height}{\includegraphics[width=0.25\textwidth]{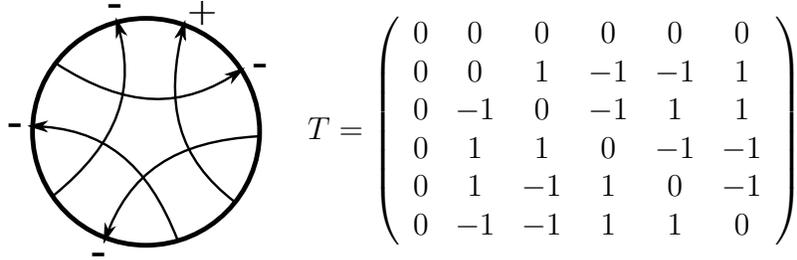}} \quad
\raisebox{-0.5\height}{$T=\left(
\begin{array}{cccccc}
 0 & 0 & 0 & 0 & 0 & 0 \\
 0 & 0 & 1 & -1 & -1 & 1 \\
 0 & -1 & 0 & -1 & 1 & 1 \\
 0 & 1 & 1 & 0 & -1 & -1 \\
 0 & 1 & -1 & 1 & 0 & -1 \\
 0 & -1 & -1 & 1 & 1 & 0 \\
\end{array}
\right)$}
\caption{Gauss diagram and the based matrix of the knot 5.2012}\label{fig:knot_5.2012}
\end{figure}

\end{example}

\subsection{Regular virtual knots}\label{subsect:regular_knots}

Let us consider virtual knot diagrams up to regular isotopy of classical crossings, i.e. up to all moves except R1.

\begin{definition}\label{def:regular_virtual_knots}
A {\em regular virtual knot} is an equivalence class of virtual knot diagrams modulo isotopies, second and third Reidemeister moves and detour moves.
\end{definition}

Thus, we do not have first Reidemeister moves and do not need to check invariance under them. Note also, that the statements of Sections~\ref{sect:potentials} and~\ref{sect:quasiindex} can be proved without first Reidemeister move. Then we can formulate the theorem.

\begin{theorem}\label{thm:quasiindex_parity_regular}
  Let $\K$ be a regular virtual knot and $\pi$ be an index on diagrams of $\K$ with coefficients in an abelian group $A$. Then the formula~\eqref{eq:quasiindex_cycle_base} defines an invariant 1-cycle $\delta^\pi_D$ and the formula~\eqref{eq:index_parity_reduced}
  defines an oriented parity $p^\pi_D$, $D\in\mathfrak K_{ac}$, on the regular knot $\K$ with coefficients in the group $\bar A=A/\left<\sigma(\pi)\right>$, where $\sigma(\pi)\in A$ is the signature of the index $\pi$.
\end{theorem}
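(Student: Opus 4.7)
The plan is to reduce the theorem to a combination of results already established in Sections~\ref{sect:potentials} and~\ref{sect:quasiindex}, exploiting the fact that in the regular category the only potentially problematic move (the first Reidemeister move) has been removed. Concretely, I would proceed in three steps: (i) verify that $\delta^\pi_D$ from~\eqref{eq:quasiindex_cycle_base} is a 1-cycle in $C_1(\tilde D,A)$ that is invariant under all morphisms of the regular diagram category; (ii) pass to the quotient group $\bar A=A/\langle\sigma(\pi)\rangle$ to make it a normalized invariant cycle; (iii) apply Theorem~\ref{thm:parity_cycle_inverse} to extract the parity $p^\pi$, whose intersection formula unfolds to~\eqref{eq:index_parity_reduced}.

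For step (i), each signed half $D^-_v$ is a closed loop on the covering graph $\tilde D$ and so represents a 1-cycle, whence any $A$-linear combination is again a 1-cycle; this gives $d\delta^\pi_D=0$. The invariance of $\delta^\pi_D$ under second and third Reidemeister moves is precisely the content of Proposition~\ref{prop:index_delta_r23_invariance}. Invariance under isotopies and detour moves is immediate from property (I0), since such morphisms induce bijections of arcs and of crossings that identify $D^-_v$ with $D'^-_{f_*(v)}$ and preserve the values $\pi_D(v)$. Because no first Reidemeister moves occur in the regular category, no further invariance check is needed.

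For step (ii), Proposition~\ref{prop:quasiindex_signature_invariance} shows that the signature $\sigma(\pi)=D\cdot\delta^\pi_D$ is an invariant, using only R2 and R3 in its proof, hence applicable in the regular setting. By Corollary~\ref{cor:cycle_normalization}, projecting coefficients to $\bar A=A/\langle\sigma(\pi)\rangle$ turns $\delta^\pi_D$ into a normalized invariant 1-cycle $\bar\delta^\pi_D\in H_1(D,\bar A)$, i.e.\ $D\cdot\bar\delta^\pi_D=0$ in $\bar A$.

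For step (iii), Theorem~\ref{thm:parity_cycle_inverse} associates to any normalized invariant cycle an oriented parity via the intersection formula; expanding $D^l_v\cdot\bar\delta^\pi_D$ using~\eqref{eq:quasiindex_cycle_base} recovers exactly formula~\eqref{eq:index_parity_reduced}. The main technical point is that Theorem~\ref{thm:parity_cycle_inverse} as stated verifies properties (P0), (P1), (P2), (P3+), but property (P1) appeals to a first Reidemeister move that is not part of the regular Reidemeister calculus. In the regular category, (P1) is vacuous, so its omission causes no loss, and the verifications of (P0), (P2), (P3+) in the proof of Theorem~\ref{thm:parity_cycle_inverse} carry over verbatim. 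The subtlest point — and the one I would be most careful with — is making sure that absolutely every step of that transport (especially the homological identity $[D^l_u]+[D^l_v]=[D]$ used for R2 and the analogous R3 identities) is already witnessed inside $H_1(AD(D),\bar A)$, which it is since those identities were proved purely from the R2/R3 local pictures. With this, $p^\pi$ satisfies the definition of an oriented parity on the regular knot $\K$ with coefficients in $\bar A$.
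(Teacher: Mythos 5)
Your proposal is correct and follows essentially the same route the paper intends: the paper's own justification is precisely that Proposition~\ref{prop:index_delta_r23_invariance} handles R2/R3 invariance of $\delta^\pi_D$, the absence of R1 in the regular category removes the only remaining obstruction (the increment in~\eqref{eq:index_delta_r1}), and Proposition~\ref{prop:quasiindex_signature_invariance} together with Corollary~\ref{cor:cycle_normalization} and Theorem~\ref{thm:parity_cycle_inverse} then yield the parity. Your explicit observation that (P1) is vacuous and that the R2/R3 verifications in Theorem~\ref{thm:parity_cycle_inverse} are purely local is a useful fleshing-out of what the paper leaves implicit, but it is not a different argument.
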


\subsection{Rotational virtual knots}\label{subsect:rotational_knots}

An analogous result can be obtained if one forbids first virtual Reidemeister moves (instead of first classical Reidemeister moves).

\begin{definition}\label{def:rotational_virtual_knots}
A {\em rotational virtual knot}~\cite{K3} is an equivalence class of virtual knot diagrams modulo isotopies, classical Reidemeister moves and regular detour moves (i.e. moves which includes virtual analogues of second and third Reidemeister move but not first Reidemeister move).
\end{definition}

Let $D$ be a diagram of a rotational virtual knot $\K$. Then $D$ can be considered as plane diagram. In rotational knot theory, the {\em writhe} $w(D)=\sum_{v\in\V(D)}sgn(v)$ and the {\em rotation number} $\tau(D)$ of the diagram can be changed only by a first classical Reidemeister move.

Let $f\colon D\to D'$ be an increasing first Reidemeister move and $v'\in\V(D)$ be the new crossing. Then we have the following table.

\begin{center}
\begin{tabular}{l*{4}{c}}
  Type & $l_+$ & $l_-$ & $r_+$ & $r_-$ \\
  \hline
  $\Delta w$ & $1$ & $-1$ & $1$ & $-1$ \\
  $\Delta\tau$ & $1$ & $1$ & $-1$ & $-1$ \\
  $\Delta\delta^\pi$ & $\pi^\circ D$ & $0$ & $0$ & $-\pi^\circ D$ \\
  \hline
\end{tabular}
\end{center}

The table shows that the writhe and the rotation number can be used for correction of the 1-cycle $\delta^\pi$.

\begin{theorem}\label{thm:quasiindex_parity_rotational}
  Let $\K$ be a rotational virtual knot and $\pi$ be an index on diagrams of $\K$ with coefficients in an abelian group $A$. Then the formula
\begin{equation}\label{eq:index_delta_rotational}
\hat\delta^\pi_D=\sum_{v\in\V(D)} sgn(v)\pi_D(v)\cdot D^-_v-\frac{w(D)+\tau(D)}{2}\pi^\circ\cdot D
\end{equation}
defines an invariant 1-cycle $\hat\delta^\pi_D$ and the formula
\begin{equation}\label{eq:index_parity_rotational}
\hat p^\pi_D(v)=\sum_{v'\in\V(D)}sgn(v')\pi_D(v')\cdot(D^l_v\cdot D^-_{v'})+\frac{w(D)+\tau(D)}{2}\pi^\circ\cdot ip_D(v)
\end{equation}
defines an oriented parity $\hat p^\pi_D$, $D\in\mathfrak K_{ac}$, on the rotational knot $\K$ with coefficients in the group $\bar A=A/\left<\sigma(\pi)\right>$, where $\sigma(\pi)\in A$ is the signature of the index $\pi$.
\end{theorem}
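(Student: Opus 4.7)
The plan is to build on Proposition~\ref{prop:index_delta_r23_invariance}, which already gives invariance of the base cycle $\delta^\pi_D$ from~\eqref{eq:quasiindex_cycle_base} under second and third Reidemeister moves (and detour moves). In a rotational virtual knot we must additionally check invariance under classical first Reidemeister moves, while virtual first Reidemeister moves are forbidden. So the main task is to show that the correction term $\tfrac{w(D)+\tau(D)}{2}\pi^\circ\cdot D$ absorbs precisely the failure of $\delta^\pi_D$ to be invariant under R1.

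First, I would read off the R1 increments of each summand from the table preceding the theorem and from equation~\eqref{eq:index_delta_r1}: for a new crossing of type $l_+$ both $\delta^\pi$ and $\tfrac{w+\tau}{2}\pi^\circ\cdot D$ jump by $\pi^\circ\cdot D$; for type $r_-$ both jump by $-\pi^\circ\cdot D$; for types $l_-$ and $r_+$ both are unchanged. Hence the difference $\hat\delta^\pi_D$ is R1-invariant. The writhe $w(D)$ and the rotation number $\tau(D)$ are unchanged by R2, R3 and regular detour moves, so the correction term is constant along those moves and $\hat\delta^\pi_D$ inherits invariance under them from $\delta^\pi_D$. Along the way one should note that the table forces $w(D)+\tau(D)$ to change only by $0$ or $\pm 2$ under R1, so $\tfrac{w(D)+\tau(D)}{2}$ remains an integer, and the expression is well defined.

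Next I would compute the signature of $\hat\delta^\pi_D$. Since $D\cdot D=0$ on the oriented surface $AD(D)$, the correction term contributes nothing and $D\cdot\hat\delta^\pi_D=D\cdot\delta^\pi_D$. Using $D=D^l_v+D^r_v$ and the identity $D\cdot D^-_v=-sgn(v)\,ip_D(v)$ (which follows from $D\cdot D^r_v=-ip_D(v)$ recorded after~\eqref{eq:quasiindex_signature} together with the definition of $D^-_v$) one obtains $D\cdot\hat\delta^\pi_D=\sigma(\pi)$, matching Definition~\ref{def:quasiindex_signature}. Reducing the coefficients modulo $\langle\sigma(\pi)\rangle$ as in Corollary~\ref{cor:cycle_normalization} turns $\hat\delta^\pi_D$ into an invariant \emph{normalized} cycle with values in $\bar A$, and then Theorem~\ref{thm:parity_cycle_inverse} promotes the intersection formula $v\mapsto D^l_v\cdot\hat\delta^\pi_D$ to an oriented parity on $\mathfrak K$.

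Finally, to match~\eqref{eq:index_parity_rotational} I would expand $D^l_v\cdot\hat\delta^\pi_D=D^l_v\cdot\delta^\pi_D-\tfrac{w+\tau}{2}\pi^\circ\cdot(D^l_v\cdot D)$ and apply $D^l_v\cdot D=-ip_D(v)$, which is used implicitly in the constant-parity example of Section~\ref{subsect:parity_cycle}. This converts the minus sign in front of the correction term into the plus sign in~\eqref{eq:index_parity_rotational}. The hard part is bookkeeping: one must correctly correlate the signs in the R1 table with those coming from~\eqref{eq:index_delta_r1} and with the convention $D^-_v=D^{-sgn(v)}_v$ so that the cancellation is exact; once that is nailed down, everything else is a packaging of Proposition~\ref{prop:index_delta_r23_invariance}, Corollary~\ref{cor:cycle_normalization}, and Theorem~\ref{thm:parity_cycle_inverse}.
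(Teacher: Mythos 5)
Your proposal is correct and follows exactly the route the paper intends: the paper itself gives no written proof beyond the table of increments of $w$, $\tau$ and $\delta^\pi$ under the four types of R1 moves, and your argument is precisely the verification that those increments cancel, combined with Proposition~\ref{prop:index_delta_r23_invariance} for R2/R3, the signature computation $D\cdot\hat\delta^\pi_D=\sigma(\pi)$, Corollary~\ref{cor:cycle_normalization}, and Theorem~\ref{thm:parity_cycle_inverse}. Your sign bookkeeping (including $D^l_v\cdot D=-ip_D(v)$ turning the minus in~\eqref{eq:index_delta_rotational} into the plus in~\eqref{eq:index_parity_rotational}) checks out.
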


\section{Intersection formula for long knots}\label{sect:long_knots}

Let us recall the definition of long virtual knots~\cite{MI}.

\begin{definition}\label{def:long_virtual_knot}
A \emph{long virtual knot diagram} is an immersion in general position of the oriented line $\R$ into the plane $\R^2$ which coincides with the axis $Ox$ outside some sufficiently large disc; and each crossing is marked as either classical or virtual crossing, see Fig.~\ref{fig:long_virtual_knot}.

A \emph{long virtual knot} is an equivalence class of long virtual knot diagrams modulo isotopies, Reidemester moves and detour moves.

\begin{figure}[h]
\centering\includegraphics[width=0.5\textwidth]{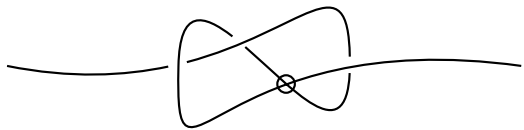}
\caption{A long virtual knot}\label{fig:long_virtual_knot}
\end{figure}
\end{definition}

\begin{definition}\label{def:crossing_order}
Given a crossing $v\in\V(D)$ of a long knot diagram $D$, the oriented smoothing at $v$ splits the diagram $D$ into the \emph{open half} $D^o_v$ of $D$ at the crossing $v$ and the \emph{closed half} $D^c_v$ of $D$ at $v$, see Fig.~\ref{fig:long_knot_halves}.

\begin{figure}[h]
\centering\includegraphics[width=0.6\textwidth]{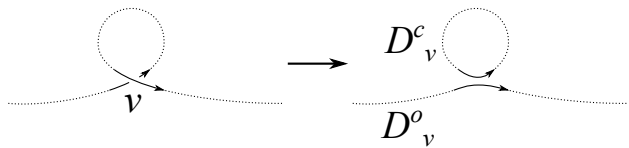}
\caption{Halves of a long knot diagram at the crossing $v$}\label{fig:long_knot_halves}
\end{figure}

For a crossing $v\in\V(D)$ of the diagram $D$ define its \emph{order} $o(v)$ as follows~\cite{CG}: $o(v)=1$ if  $D^c_v=D^r_v$, and $o(v)=-1$ if $D^c_v=D^l_v$.
\end{definition}

For example, the orders of the classical crossings of the diagram in Fig.~\ref{fig:long_virtual_knot} are (from left to right) $-1,1,1$.

\begin{remark}
The results of Sections~\ref{sect:potentials} and~\ref{sect:quasiindex} holds for long knots with two main differences.
\begin{enumerate}
\item Since a long knot is not closed, there is no normalization condition on the parity 1-cycle.
\item Using closed halves, one can localize the increments which the parity quasi-index produces in formula~\eqref{eq:cycle_quasiindex} under Reidemeister moves, and resolve the problem of constructing the parity cycle from the parity quasi-index.
\end{enumerate}
\end{remark}

Below we reformulate the results of Sections~\ref{sect:potentials} and~\ref{sect:quasiindex} for long virtual knots.

\begin{theorem}\label{thm:long_parity_cycle}
1. Let $p$ be an oriented parity with coefficients in an abelian group $A$ on the diagrams of a long virtual knot $\K$. Then formula~\eqref{eq:parity_cycle} defines an invariant 1-cycle $\delta^p_D\in H_1(D,A)$ such that
\begin{equation}\label{eq:intersection_formula_long_knots}
p_D(v)=-o(v)\cdot D^c_v\cdot\delta^p_D.
\end{equation}

2. Given an invariant 1-cycle $\delta$ on diagrams of a long virtual knot $\K$ with coefficients in an abelian group $A$, then the formula
\[
p^\delta_D(v)=-o(v)\cdot D^c_v\cdot\delta_D.
\]
defines an oriented parity $p^\delta$ on on diagrams of $\K$ with coefficients in $A$.
\end{theorem}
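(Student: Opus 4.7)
My plan is to transport the proofs of Theorem~\ref{thm:parity_cycle} and Theorem~\ref{thm:parity_cycle_inverse} to the long-knot setting. Two features change: the parity cycle $\delta^p_D$ no longer needs to be normalized (because $D$ itself is not a cycle in $AD(D)$), and only one of the two smoothed halves at each crossing is a cycle, namely the closed half $D^c_v$; the order $o(v)\in\{\pm 1\}$ records whether this closed half equals $D^l_v$ or $D^r_v$.

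For part 1, the proofs that $\delta^p_D$ is a genuine 1-cycle and an invariant family carry over verbatim: the cycle condition is the purely local Corollary~\ref{cor:parity_delta_potential}, and invariance follows from (P0) by the bigon argument from the proof of Theorem~\ref{thm:parity_cycle}, neither of which uses closedness. For the intersection formula I would fix an unbounded base arc $o\in\mathcal A(D)$ (for example the arc extending to $-\infty$), introduce the potential $\phi_a=\delta_{a,o}$, and reproduce the calculation from Theorem~\ref{thm:parity_cycle} showing that $\phi$ changes by $\pm\delta$ across each crossing and that $p_D(v)=\phi_a-\phi_d$. The difference $\phi_a-\phi_d$ is computed as an intersection number by integrating $\phi$ along any path from $d$ back to $a$ that is a cycle in $AD(D)$; for a long knot the unique such path is the closed half $D^c_v$. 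If $o(v)=-1$ then $D^c_v=D^l_v$ runs from $d$ to $a$ in the knot orientation and yields $D^c_v\cdot\delta^p_D=p_D(v)$, while if $o(v)=+1$ then $D^c_v=D^r_v$ runs in the opposite direction and yields $D^c_v\cdot\delta^p_D=-p_D(v)$; both cases unify to $p_D(v)=-o(v)\,D^c_v\cdot\delta^p_D$.

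For part 2, I would verify the axioms of oriented parity for $p^\delta_D(v)=-o(v)\,D^c_v\cdot\delta_D$. Property (P0) reduces to the contraction argument of Theorem~\ref{thm:parity_cycle_inverse} together with the remark that $o(v)=o(f_*(v))$ is preserved by Reidemeister moves. Property (P1) becomes \emph{easier} than in the closed case: the closed half of an $R1$ crossing is a small null-homologous loop in $AD(D)$, so $D^c_v\cdot\delta_D=0$ with no normalization hypothesis required. For (P2) and (P3+), the arguments reduce to the homological identities $o(v_1)[D^c_{v_1}]+o(v_2)[D^c_{v_2}]=0$ for an $R2$ pair and $\sum_i\epsilon_\Delta(v_i)\,o(v_i)[D^c_{v_i}]=0$ for an $R3$ triple. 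These are the long-knot analogs of the identities $[D^l_u]+[D^l_v]=[D]$ and $\sum\epsilon_\Delta[D^l_{v_i}]=k[D]$ of Theorem~\ref{thm:parity_cycle_inverse}, with the class $[D]$ now replaced by $0$ because the open halves share a common boundary at infinity that cancels in the sum.

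The main obstacle I anticipate is the sign bookkeeping in the $R2$ and $R3$ identities. In the closed case the relation $D^l_u+D^l_v\sim D$ held without sign worries and normalization $D\cdot\delta=0$ finished the job. For long knots, the open halves carry boundaries at $\pm\infty$, so one must check that with the weights $o(v_i)$ these boundaries cancel in every Reidemeister configuration, leaving the remaining closed halves as a true boundary in $AD(D)$. This amounts to a finite but genuine case analysis of how the order behaves at the crossings of each type of Reidemeister move, and it is the real computational content of the theorem.
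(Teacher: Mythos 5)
Your proposal is correct and takes essentially the same approach as the paper: the paper states Theorem~\ref{thm:long_parity_cycle} without a written proof, remarking only that the arguments of Theorems~\ref{thm:parity_cycle} and~\ref{thm:parity_cycle_inverse} carry over to long knots once the normalization condition is dropped and left halves are replaced by closed halves weighted by the order $o(v)$ --- which is exactly the adaptation you spell out. Your closing observation that the $R2$/$R3$ homological identities must hold because the closed halves carry no boundary while the open class does is the right way to settle the sign bookkeeping you worry about.
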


\begin{theorem}\label{thm:long_quasiindex_cycle}
1. Let $\delta$ be an invariant 1-cycle on diagrams of a long virtual knot $\K$ with coefficients in an abelian group $A$. Then formula~\eqref{eq:parity_quasiindex} defines a quasi-index $\pi$ on diagrams of $\K$ with coefficients in $A$ such that
\begin{equation}\label{eq:delta_formula_long}
\delta_D=\sum_{v\in\V(D)} o(v)\pi_D(v)\cdot D^c_v+\rho D
\end{equation}
for some $\rho\in A$ and any diagram $D\in\mathfrak K$.

2. Let $\pi$ be a quasi-index on diagrams of a long virtual knot $\K$ with coefficients in an abelian group $A$, and $\rho\in A$ be an arbitrary element. Then formula~\eqref{eq:delta_formula_long} defines an invariant 1-cycle on diagram of the knot $\K$ with coefficients in $A$ and the formula
\begin{equation}\label{eq:long_quasiindex_parity}
p^\pi_D(v)=-o(v)\sum_{v'\in\V(D)}o(v')\pi_D(v')\cdot(D^c_v\cdot D^c_{v'})+o(v)\cdot ip_D(v)\cdot\rho
\end{equation}
defines an oriented parity $p^\pi$ on diagrams of $\K$ with coefficients in $A$.
\end{theorem}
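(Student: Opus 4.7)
The plan is to follow the closed-knot arguments of Section~\ref{sect:quasiindex} and Corollary~\ref{quasiindex_to_parity}, taking advantage of the fact that in the long-knot setting the closed half $D^c_v$ at each crossing is a genuine 1-cycle on the abstract knot diagram (unlike the right half $D^r_v$ in the closed case, which fails to be closed on its own). This absorbs the increments that were tracked by the reminder $\rho$ in Proposition~\ref{prop:reminder_reidemeister}, so that the invariant cycle produced from a quasi-index is invariant for \emph{any} fixed choice of $\rho$.

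For part 1, I would first invoke Theorem~\ref{thm:long_parity_cycle}.2 to obtain an oriented parity $p^\delta$ from the invariant 1-cycle $\delta$, and then define $\pi_D(v)=\delta_c-\delta_a$ as in~\eqref{eq:parity_quasiindex}. The verification of properties (Q0)--(Q3) is a verbatim translation of Theorem~\ref{thm:quasiindex}.1 since only the local behaviour of arc labels near each Reidemeister move is used there. To establish the representation~\eqref{eq:delta_formula_long}, I would consider the 1-chain
\[
\delta' = \delta_D - \sum_{v\in\V(D)} o(v)\pi_D(v)\cdot D^c_v
\]
on the Gauss diagram $G(D)$. By definition of $o(v)$, the cycle $D^c_v$ has coefficient $o(v)$ on the chord $v$ (cf.~Fig.~\ref{fig:gauss_right_half}), so the subtracted term precisely kills the $\pi_D(v)$-label on chord $v$. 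Hence $\delta'$ is supported on the core of $G(D)$, a contractible 1-complex once the endpoints at infinity are joined, so $\delta' = \rho\cdot D$ for a unique $\rho\in A$.

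For part 2, given $\pi$ and $\rho$, I would set $\delta_D$ by~\eqref{eq:delta_formula_long}. Each $D^c_v$ is a cycle by construction and $D$ is a cycle in the one-point compactification, so $\delta_D\in H_1(D,A)$. Invariance under isotopies and detour moves is immediate; for the classical Reidemeister moves I would run through three checks. For R1 the new crossing's closed half is a contractible loop, hence null-homologous. For R2 the two new crossings satisfy $\pi_D(v_1)=\pi_D(v_2)$ by (Q2), and the signed closed halves cancel, $o(v_1)D^c_{v_1}+o(v_2)D^c_{v_2}\sim 0$. For R3 the identity $\sum_i \epsilon_\Delta(v_i)\,o(v_i)\,D^c_{v_i}\sim 0$ combined with $\pi_{D'}(f_*(v_i))-\pi_D(v_i)=\epsilon_\Delta(v_i)\,\lambda(f)$ from (Q3) produces the required cancellation. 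Finally, applying the intersection formula~\eqref{eq:intersection_formula_long_knots} of Theorem~\ref{thm:long_parity_cycle} to $\delta_D$ gives
\[
p^\pi_D(v)=-o(v)\, D^c_v\cdot\delta_D = -o(v)\sum_{v'\in\V(D)} o(v')\pi_D(v')(D^c_v\cdot D^c_{v'}) - o(v)\rho(D^c_v\cdot D),
\]
and the identity $D^c_v\cdot D = -ip_D(v)$ (which holds because the closed half meets $D$ exactly where a left or right half does, up to the sign $o(v)$) converts this into~\eqref{eq:long_quasiindex_parity}.

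The main obstacle will be the R3-invariance of $\delta_D$: the identity $\sum_i \epsilon_\Delta(v_i)\,o(v_i)\,D^c_{v_i}\sim 0$ must be checked across all the configurations in Fig.~\ref{fig:r3_types}, because the signs $o(v_i)$ depend on which side of the local picture the basepoint at infinity lies, and these interact nontrivially with the incidence indices $\epsilon_\Delta(v_i)$. The other steps are straightforward adaptations of their closed-knot counterparts, with the simplification that no compensating change of $\rho$ is required.
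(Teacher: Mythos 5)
Your overall strategy --- transplanting the closed-knot results of Section~\ref{sect:quasiindex} to long knots by replacing right halves with closed halves --- matches the paper's, but you stop short of the one idea that actually makes the theorem work, and you flag the missing step yourself as ``the main obstacle''. The paper never verifies the identity $\sum_i\epsilon_\Delta(v_i)\,o(v_i)\,D^c_{v_i}\sim 0$ (nor its R1 and R2 analogues) by inspecting configurations. Instead it observes that the initial arc $a_0$ of a long diagram belongs to no closed half $D^c_v$, so the coefficient of $\delta_D$ on $a_0$ equals $\rho$ for \emph{every} diagram; combined with Proposition~\ref{prop:reminder_reidemeister}, which already guarantees that for any Reidemeister move $f\colon D\to D'$ the difference $\delta_{D'}-\delta_D$ is a multiple $\Delta(f)\cdot\hat D$ of the (contracted) full diagram, comparing coefficients at $a_0$ and $a'_0=f_*(a_0)$ gives $\Delta(f)=\rho-\rho=0$ in all cases at once. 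This single evaluation replaces the entire case analysis over the move types of Fig.~\ref{fig:r3_types} and over the position of the basepoint that you defer; without it (or an explicit verification of the combinatorial identity $k(f)=\sum_{i\colon o(v_i)=-1}\epsilon_\Delta(v_i)$ for every R3 configuration), part 2 is not proved.

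The same device is also what is missing from your part 1: your Gauss-diagram argument produces, for each fixed diagram $D$, a remainder $\rho(D)$ with $\delta_D=\sum_v o(v)\pi_D(v)\cdot D^c_v+\rho(D)\cdot D$, but the theorem asserts a single $\rho\in A$ valid for all diagrams simultaneously. In the paper this follows at once from $\rho(D)=\delta_D(a_0)$ together with the invariance of $\delta$, which identifies $\delta_D(a_0)$ with $\delta_{D'}(a'_0)$. Finally, your closing identity $D^c_v\cdot D=-ip_D(v)$ is asserted uniformly, but since $D^c_v$ coincides with $D^r_v$ or with $D^l_v$ according to the sign of $o(v)$, the factor $o(v)$ in front of $ip_D(v)\cdot\rho$ in~\eqref{eq:long_quasiindex_parity} must emerge from exactly this dependence; that computation needs to be written out rather than waved at.
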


\begin{proof}
1. Let $\delta$ be an invariant 1-cycle on diagram of the knot $\K$ with coefficients in $A$. By Theorem~\ref{thm:quasiindex}, formula~\eqref{eq:cycle_quasiindex} and definition of closed halves, the family of maps $\pi_D$ is a quasi-index and
\[
\delta_D=\sum_{v\in\V(D)} o(v)\pi_D(v)\cdot D^c_v+\hat\rho(D)\cdot D.
\]
Let $a_0$ be the initial arc of the diagram $D$. Since $a_0$ belongs to neither halves $D^c_v$, $v\in\V(D)$, the value of the cycle $\delta_D$ on the arc $a_0$ is equal to $\hat\rho(D)$. Then by invariance of $\delta$, the value $\hat\rho(D)=\delta_D(a_0)$ does not depend on the diagram $D$. Thus, $\hat\rho(D)\equiv\rho\in A$ and we get the formula~\eqref{eq:delta_formula_long}.

2. Let $\pi$ be a quasi-index $\pi$ on diagrams of $\K$ with coefficients in $A$. Define a 1-cycle $\delta$ by formula~\eqref{eq:delta_formula_long}. Let us prove its invariance.

Let $f\colon D\to D'$ be a Reidemeister move. By Proposition~\ref{prop:reminder_reidemeister} and definition of closed halves, we have
\[
\delta_{D'}-\delta_D=\Delta(f)\cdot \hat D\in C_1(\hat D, A)
\]
where $\hat D$ is the intermediate graph in the surface $AD(D)$, see the proof of Proposition~\ref{prop:reminder_reidemeister}. Let $a_0$ ad $a'_0$ be the initial arcs of $D$ and $D'$. Then $f_*(a_0)=a'_0$. Since $a_0$ does not belong to any closed half, $D^c_v(a_0)=0$ for any $v\in\V(D)$ and $\delta_D(a_0)=\rho\cdot D(a_0)=\rho$. Analogously, $\delta_{D'}(a'_0)=\rho$. Hence,
\[
\Delta(f)=\Delta(f)\cdot D(a_0)=\delta_{D'}(a'_0)-\delta_D(a_0)=\rho-\rho=0.
\]
Thus, $\delta_{D'}-\delta_D=0$, and the cycle $\delta$ is invariant.

By Theorem~\ref{thm:long_parity_cycle} the cycle $\delta$ defines an oriented parity, and the formulas~\eqref{eq:intersection_formula_long_knots} and~\eqref{eq:delta_formula_long} imply the formula~\eqref{eq:long_quasiindex_parity}.
\end{proof}

\begin{example}\label{ex:long_knot_quasiindex}
Let $B=\Z$ be the biquandle with operations $x\circ y=x+1$, $x\ast y = x+1$, $x,y\in B$, and $\theta\in H^1(B,\Z)$ be the biquandle 1-cocycle given by the formula $\theta(x)=x$, $x\in B$.

Consider the long knot diagram $D$ in Fig.~\ref{fig:long_virtual_knot}. Choose the colouring $c\in Col_B(D)$ such that $c(a_0)=0$ where $a_0$ is the initial arc of $D$ (Fig.~\ref{fig:long_virtual_knot_coloured}). Then we have a 1-cycle $\delta_D=\delta^\theta_{c,D}$. The corresponding quasi-index is constant $\pi\equiv -1$.

\begin{figure}[h]
\centering\includegraphics[width=0.5\textwidth]{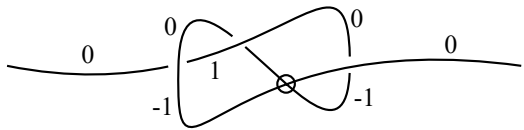}
\caption{A colouring of a long virtual knot}\label{fig:long_virtual_knot_coloured}
\end{figure}

The cycle $\delta_D$ is induced by formula~\eqref{eq:delta_formula_long} with the quasi-index $\pi\equiv -1$ and $\rho=0$.
The values of the parity $p^\pi=p^\theta$ on the crossings are (from left to right) $-1,1,0$ that coincide with the values of the index parity $ip$.
\end{example}

\section{Intersection formula for parity functors}\label{sect:parity_functors}

Let $A$ and $B$ be two abelian groups. A \emph{partial isomorphism} between $A$ and $B$ is a group isomorphism $f\colon A'\to B'$ where $A'\subset A$ and $B'\subset B$ are some subgroups.

Recall the definition of an oriented parity functor~\cite{N1}.

\begin{definition}\label{def:oriented_parity_functor}
An {\em oriented parity functor $P$ on the diagrams of a virtual knot $\mathcal{K}$} is a family of pairs $(P_D, A(D))$, $D\in\mathfrak K$, where $A(D)$ is an abelian group and $P_D\colon\V(D)\to \mathcal A(D)$ is a map from the set of crossings of the diagram $D$, such that for any isotopy or a Reidemeister move $f\colon D\to D'$ a partial isomorphism $A(f)$ between $A(D)$ and $A(D')$ is fixed, and

\begin{itemize}
\item[(P0)] for any corresponding crossings $v\subset \V(D)$ and $v'=f_*(v)\in\V(D')$ we have $P_{D'}(v')=A(f)(P_D(v))$;

\item[(P1)] $P_D(v)=0$ if $f$ is a decreasing first Reidemeister move and $v$ is the disappearing crossing;

\item[(P2)] $P_D(v_1)+P_D(v_2)=0$ if $f$ is a decreasing second Reidemeister move and $v_1,\,v_2$ are the disappearing crossings;

\item[(P3+)] if $f\colon D\to D'$ is a third Reidemeister move then
\[
\epsilon(v_1)P_D(v_1)+\epsilon(v_2)P_D(v_2)+\epsilon(v_3)P_D(v_3)=0
\]
  where $v_1,v_2,v_3$  are the crossings involved in the move and $\epsilon(v_i)$ is the incidence index of the crossing $v_i$ in relation with the disappearing triangle.
\end{itemize}
\end{definition}

Given an oriented parity functor $(P_D,A(D))$, $D\in\mathfrak K$, denote by $\bar A(D)$ the subgroup generated by the elements $P_D(v)$, $v\in\V(D)$, in the coefficient group $A(D)$. The parity functor is called \emph{reduced} if $A(D)=\bar A(D)$ for all diagrams $D$ of the knot $\K$.

\begin{remark}
1. Each parity $p$ with coefficients in a group $A$ is a parity functor with $A(D)=A$, $A(f)=id_A$ and $P_D=p_D$.

2. A parity functor assigns to any diagram its own coefficient group where the parity values lie, and these groups are identified by partial isomorphisms for diagrams connected by a Reidemeister move. This makes parity functors more flexible comparing to parities. For example, parity functors can assimilate monodromy of quasi-indices.
\end{remark}

\begin{theorem}\label{thm:quasiindex_parity_functor}
  Let $\pi$ be a quasi-index with coefficients in an abelian group $A$ on diagrams of a virtual knot $\mathcal K$. Denote $\bar A=A/\left<\sigma(\pi)\right>$ where $\sigma(\pi)$ is the signature of the quasi-index $\pi$. Then the family of maps $P_D\colon \V(D)\to \bar A\oplus\Z$ given by the formula
\begin{equation}\label{eq:quasiindex_parity_functor}
P_D(v)=\left(\sum_{v'\in\V(D)}\pi_D(v')\cdot(D^l_v\cdot D^r_{v'}),\ ip_D(v)\right)
\end{equation}
defines an oriented parity functor on diagrams of the knot $\mathcal K$ with coefficients in the groups $A(D)\equiv \bar A\oplus\Z$. For a Reidemeister move $f\colon D\to D'$ the isomorphism $A(f)$ is defined by the formula
\[
A(f)(x,y)=(x+\Delta_\pi(f)\cdot y,y)
\]
where $\Delta_\pi(f)=\rho(D')-\rho(D)$ is determined by Proposition~\ref{prop:reminder_reidemeister}.
\end{theorem}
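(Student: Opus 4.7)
The plan is to read the two components of $P_D(v)$ as a ``raw'' parity value paired with the index parity, with the $\Z$-component acting as a receipt for the reminder ambiguity of $\pi$. Writing $X_D(v) := \sum_{v'\in\V(D)}\pi_D(v')\cdot(D^l_v\cdot D^r_{v'})$, one has $P_D(v) = (X_D(v),\, ip_D(v))$ and $X_D(v) = D^l_v\cdot\tilde\delta^\pi_D$, where $\tilde\delta^\pi_D := \sum_{v'}\pi_D(v')\cdot D^r_{v'}$ is the ``bare'' cycle from formula~\eqref{eq:quasiindex_cycle_original}. Since $D^l_v\cdot D = -ip_D(v)$ (as used in the constant parity cycle example), for any scalar $\rho\in A$ the chain $\delta^\pi_{D,\rho} := \tilde\delta^\pi_D + \rho\cdot D$ satisfies the key identity
\[
D^l_v\cdot\delta^\pi_{D,\rho} \;=\; X_D(v) - \rho\cdot ip_D(v).
\]

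Property (P0) then follows from Proposition~\ref{prop:reminder_reidemeister}. Given a Reidemeister or detour move $f\colon D\to D'$, choose reminders with $\rho(D') - \rho(D) = \Delta_\pi(f)$; by Proposition~\ref{prop:reminder_reidemeister} the cycles $\delta^\pi_{D,\rho(D)}$ and $\delta^\pi_{D',\rho(D')}$ become equal in $H_1(AD(D),A)$ via the contraction map $h$ used in the proof of Theorem~\ref{thm:parity_cycle_inverse}. The (P0)-argument of that theorem then delivers $D^l_v\cdot\delta^\pi_{D,\rho(D)} = D'^l_{v'}\cdot\delta^\pi_{D',\rho(D')}$ for any corresponding pair $v,v' = f_*(v)$. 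Applying the key identity on both sides and using the classical invariance $ip_D(v) = ip_{D'}(v')$ yields $X_{D'}(v') = X_D(v) + \Delta_\pi(f)\cdot ip_D(v)$, which is exactly $P_{D'}(v') = A(f)(P_D(v))$; for isotopies and detour moves $\Delta_\pi(f) = 0$ and the check is immediate.

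Axioms (P1), (P2), (P3+) are verified componentwise by the homological arguments of Theorem~\ref{thm:parity_cycle_inverse}. The $\Z$-component obeys them because $ip$ is itself a parity. For the $\bar A$-component, the halves in Figs.~\ref{fig:half_r1}--\ref{fig:half_r3} satisfy $[D^l_v]\in\{0,[D]\}$ in a decreasing R1, $[D^l_{v_1}]+[D^l_{v_2}]=[D]$ in a decreasing R2, and $\sum_i\epsilon_\Delta(v_i)[D^l_{v_i}]\in\{-[D],0,[D],2[D]\}$ in an R3. Pairing these with $\tilde\delta^\pi_D$ produces an integer multiple of $\sigma(\pi) = D\cdot\tilde\delta^\pi_D$, and such multiples vanish in $\bar A = A/\langle\sigma(\pi)\rangle$. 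The required linear combinations of $P$-values are therefore zero in $\bar A\oplus\Z$.

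Finally, $A(f)\colon\bar A\oplus\Z\to\bar A\oplus\Z$ is an automorphism with explicit inverse $(x,y)\mapsto(x-\Delta_\pi(f)\cdot y,\,y)$, fulfilling the partial-isomorphism clause of Definition~\ref{def:oriented_parity_functor}. The main subtlety I expect is the R3-case of (P0): a single scalar $\Delta_\pi(f)$ from Proposition~\ref{prop:reminder_reidemeister}(3) has to transport three distinct crossings simultaneously, and one must verify that this is consistent with the individual preservation of each $ip_D(v_i)$ under R3. The point is that $\Delta_\pi(f)$ is intrinsic to the move (not to any particular $v_i$), while $ip_D(v_i) = ip_{D'}(v'_i)$ is the usual crossing-wise invariance of the index parity, so the same shift acts coherently on all three pairs.
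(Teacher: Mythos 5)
Your proof is correct and follows essentially the same route as the paper: property (P0) is obtained from Proposition~\ref{prop:reminder_reidemeister} via the identity $X_{D'}(v')=X_D(v)+\Delta_\pi(f)\cdot ip_D(v)$, and (P1)--(P3+) are reduced to the homological arguments of Theorem~\ref{thm:parity_cycle_inverse} applied to the cycle $\tilde\delta^\pi_D$, which is normalized in $\bar A=A/\langle\sigma(\pi)\rangle$. You merely spell out the details (the key identity with the reminder $\rho$, the componentwise check, the coherence of the single shift $\Delta_\pi(f)$ across the three R3 crossings) that the paper leaves implicit.
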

\begin{proof}
The properties (P1)--(P3+) hold because the right-hand side of the formula~\eqref{eq:quasiindex_parity_functor} is the intersection of the half $D^l_v$ with a normalized 1-cycle.

Let us check the property (P0). Let $f\colon D\to D'$ be a Reidemeister move, $v\in\V(D)$ and $v'=f_*(v)\in\V(D')$ be corresponding crossings. Denote $P_D=(p_D,ip_d)$. By Proposition~\ref{prop:reminder_reidemeister} we have
\[
p_{D'}(v')=p_D(v)-\Delta_\pi(f)\cdot(D^l_v\cdot D)=p_D(v)+\Delta_\pi(f)\cdot ip_D(v).
\]
Then
\[
A(f)(p_D(v),ip_D(v))=\left(p_D(v)+\Delta_\pi(f)\cdot ip_D(v),ip_D(v)\right)=(p_{D'}(v'),ip_{D'}(v')),
\]
i.e. $P_{D'}(v')=A(f)(P_D(v))$.
\end{proof}

Let us show how intersection formula manifests itself for parity functors.

Let $(P_D,A(D))$ be an oriented parity functor on diagrams of a virtual knot $\K$. Given a diagram $D$ of the knot $\K$, construct the \emph{extended diagram} $D_{ext}$ by applying second Reidemeister moves as shown in Fig.~\ref{fig:potential_delta} on every arc of the diagram $D$. The coefficient group $A(D)_{ext}=A(D_{ext})$ is called the \emph{extended coefficient group} of $D$. Then the formula~\eqref{eq:parity_cycle} defines a 1-cycle
\[
\delta^P_D=\sum_{a\in\mathcal A(D)}\delta^P_a\cdot a\in C_1(D,A(D)_{ext}).
\]
Then the formula~\eqref{eq:cycle_quasiindex} defines a quasi-index $\pi^P_D\colon\V(D)\to A(D)_{ext}$.

\begin{remark}
1. Let $D$ be a diagram of the knot $\K$. By definition, there is a sequence of increasing second Reidemeister moves $e_D\colon D\to D_{ext}$. The morphism $e_D$ induces the monomorphism $A(e_D)\colon \bar A(D)\hookrightarrow\bar A(D)_{ext}$. The subgroup $\bar A(D)_{ext}$ is generated by the elements $A(e_D)(P_D(v))$, $v\in\V(D)$, which correspond to the crossings of the diagram $D$, and  the parities $P_{D_{ext}}(v')$, $v'\in\V(D_{ext})\setminus (e_D)_*(\V(D))$, which can be identified with the values $\delta^P_a$, $a\in\mathcal A(D)$, and correspond to the arcs of the diagram $D$.

2. If $f\colon D\to D'$ is an increasing first or second Reidemeister move, then there is a sequence of increasing first and second Reidemeister moves $f_{ext}\colon D_{ext}\to D'_{ext}$, see Fig~\ref{fig:r1_r2_ext}. Then the map $f_{ext}$ induces a monomorphism $A(f_{ext})\colon \bar A(D)_{ext}=\bar A(D_{ext})\to \bar A(D'_{ext})=\bar A(D')_{ext}$.

\begin{figure}[h]
\centering\includegraphics[width=0.5\textwidth]{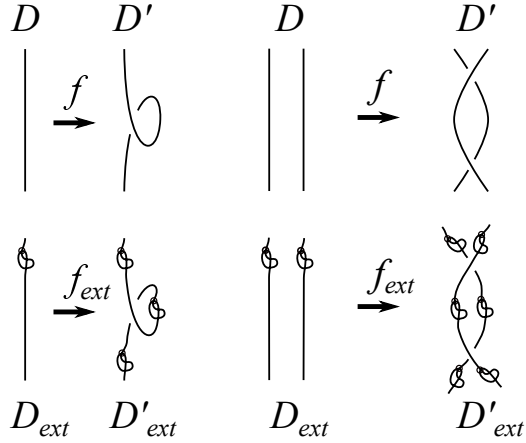}
\caption{Extended first and second Reidemeister moves}\label{fig:r1_r2_ext}
\end{figure}

The subgroup $\bar A(D)_{ext}$ includes the values $\delta^P_a$, $a\in\mathcal A(D)$, so we can use the map $A(f_{ext})$ to formulate the invariance condition for the cycle $\delta^P_D$ (see Theorem~\ref{thm:parity_cycle_quasiindex_functor}).

3. Let $f\colon D\to D'$ be a third Reidemeister move. Then there is no a direct Reidemeister move between the diagrams $D_{ext}$ and $D'_{ext}$ but there are intermediate diagrams $\bar D_{ext}$ and $\bar D'_{ext}$ (Fig.~\ref{fig:r3_ext}) connected by a third Reidemeister move $\bar f_{ext}\colon \bar D_{ext}\to \bar D'_{ext}$. The diagrams $D_{ext}$ and $D'_{ext}$ are obtained from $\bar D_{ext}$ and $\bar D'_{ext}$ by applying increasing second Reidemeister moves on the arcs $x,y,z$ and $x',y',z'$ of the triangle of the move $f$.

\begin{figure}[h]
\centering\includegraphics[width=0.9\textwidth]{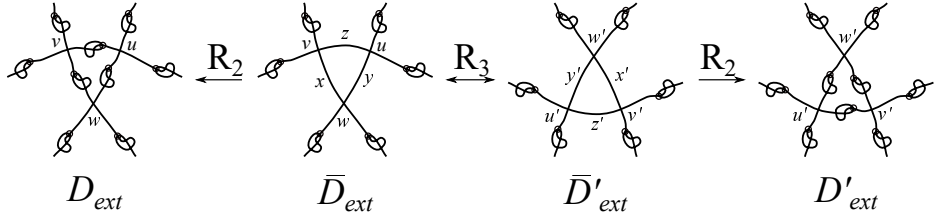}
\caption{Extended third Reidemeister move}\label{fig:r3_ext}
\end{figure}

The move $\bar f_{ext}$ induces the group isomorphism
\[
A(\bar f_{ext})\colon \bar A(D)_{ext}=\bar A(\bar D_{ext})\to\bar A(\bar D'_{ext})=\bar A(D')_{ext}.
\]
The moves $\bar D_{ext}\to D_{ext}$ allows to identify the group $\bar A(D)_{ext}$ with the subgroup in $A(D_{ext})$ generated by values $\delta^P_a$ for all arcs $a\in \mathcal A(D)$ except the arcs $x,y,z$. An analogous statement holds for $\bar A(D')_{ext}$. Thus, the map $A(\bar f_{ext})$ allows to identify the values $\delta^P_a$, $a\in\mathcal A(D)\setminus\{x,y,z\}$, with the values $\delta^P_{a'}$, $a'\in\mathcal A(D')\setminus\{x',y',z'\}$.

Hence, $A(\bar f_{ext})$ induces a correspondence between the quasi-index values $\pi_D(t)$ and $\pi_{D'}(t')$ for all crossings which do not take part in the move $f$. Note that, $\pi_D(u)\in A(D_{ext})$ does not belong to the subgroup $\bar A(D)_{ext}$ in general because it is expressed using the values $\delta^P_y$ and $\delta^P_z$. Then the value $\pi_D(u)$ can not be compared with the value $\pi_{D'}(u')$. By the same reason, one can not compare $\pi_D(v)$ with $\pi_{D'}(v')$ and $\pi_D(w)$ with $\pi_{D'}(w')$.

On the other hand, $\epsilon(u)\pi_D(u)-\epsilon(v)\pi_D(v)\in \bar A(D)_{ext}$, so we can compare its image under the isomorphism $A(\bar f_{ext})$ with the element  $\epsilon(u)\pi_{D'}(u')-\epsilon(v)\pi_{D'}(v')$ (see Theorem~\ref{thm:parity_cycle_quasiindex_functor}).

This fact gives an explanation why for the maps $\pi_D\colon\V(D)\to \bar A(D)_{ext}$ it is natural to consider the conditions of quasi-indices but not of indices.
\end{remark}

For parity functors, we have the following analogue of Theorems~\ref{thm:parity_cycle} and~\ref{thm:quasiindex}.

\begin{theorem}\label{thm:parity_cycle_quasiindex_functor}
Let $(P_D,A(D))$ be an oriented parity functor on diagrams of a virtual knot $\K$. Then the chain $\delta^P_D\in C_1(D,A(D)_{ext})$ defined above possesses the properties
\begin{itemize}
  \item $\delta^P_D$ is a cycle, i.e. $\delta^P_D\in H_1(D,A(D)_{ext})$;
  \item $\delta^P_D$ is invariant, i.e. for any Reidemeister move $f\colon D\to D'$ and any correspondent arcs $a\in\mathcal A(D)$ and $a'\in\mathcal A(D')$ one has
\[
A(f_{ext})(\delta^P_D(a))=\delta^P_{D'}(a');
\]
  \item $\delta^P_D$ is normalized, i.e. $D\cdot\delta^P_D=0$.
\end{itemize}

The family of maps $\pi^P_D\colon\V(D)\to A(D)_{ext}$ defined above possesses the properties
\begin{itemize}
  \item[(Q0)] for any Reidemeister move $f\colon D\to D'$ and any correspondent crossings $v\in\V(D)$ and $v'\in\V(D')$ one has $A(f_{ext})(\pi^P_D(v))=\pi^P_{D'}(v')$;
\item[(Q2)] $\pi_D(v_1)=\pi_D(v_2)$ for any crossings $v_1,\,v_2\in\V(D)$ to which a decreasing second Reidemeister move can be applied;

\item[(Q3)] if $v_1,v_2,v_3\in\V(D)$ are the crossings which take part in a third Reidemeister move $f\colon D\to D'$ and $v'_1,v'_2,v'_3$ are the corresponding crossings in $D'$ then for any $i,j=1,2,3$ one has
\[
A(f_{ext})\left(\epsilon(v_i)\pi^P_D(v_i)-\epsilon(v_j)\pi^P_D(v_j)\right)=\epsilon(v'_i)\pi^P_{D'}(v'_i)-\epsilon(v'_j)\pi^P_{D'}(v'_j),
\]
where  $\epsilon_\Delta(v_i)$ is the incidence index of the crossing $v_i$ to the disappearing triangle $\Delta$.
\end{itemize}

The quasi-index $\pi^P$ and the cycle $\delta^P$ are connected by the formula
\[
\delta^P_D=\sum_{v\in\V(D)} \pi^P_D(v)\cdot D^r_v+\rho(D)\cdot D\in H_1(D,A(D)_{ext})
\]
 with a unique  $\rho(D)\in A(D)_{ext}$.
\end{theorem}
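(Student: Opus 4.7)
The plan is to mimic the proofs of Theorems~\ref{thm:parity_cycle} and~\ref{thm:quasiindex} diagram-by-diagram inside the single extended coefficient group $A(D)_{ext}=A(D_{ext})$, and then use the partial isomorphisms $A(f_{ext})$ and $A(\bar f_{ext})$ to compare values on different diagrams. For a fixed $D$ the potentials $\delta^P_{a,b}$ and $\delta^P_a$ of Section~\ref{sect:potentials} are defined inside $A(D_{ext})$ by applying further second Reidemeister moves; since the parity functor satisfies the same local conditions (P0)--(P3+) as an oriented parity, the formal identities of Propositions~\ref{prop:potential_properties} and~\ref{prop:parity_potential}, and Corollaries~\ref{cor:potential_property} and~\ref{cor:parity_delta_potential} hold verbatim. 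In particular $\delta^P_a+\delta^P_b=\delta^P_c+\delta^P_d$ at every crossing, which is $d\delta^P_D=0$, so $\delta^P_D$ is a $1$-cycle.

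For invariance, given a Reidemeister move $f\colon D\to D'$ and correspondent arcs $a,a'$, I would apply the same auxiliary second Reidemeister moves to the common part of $a$ and $a'$ to get diagrams $D_1,D'_1$ and crossings $v\in\V(D_1)$, $v'\in\V(D'_1)$ with $\delta^P_a=P_{D_1}(v)$ and $\delta^P_{a'}=P_{D'_1}(v')$. The move $f$ lifts to a morphism $D_1\to D'_1$ whose realization on the extended diagrams is $f_{ext}$, and $v'$ is the crossing corresponding to $v$, so (P0) yields $A(f_{ext})(\delta^P_a)=\delta^P_{a'}$. Normalization and, as a byproduct, the intersection formula $P_D(v)=D^l_v\cdot\delta^P_D$ come from the potential trick of Theorem~\ref{thm:parity_cycle}: choose a base arc $o$, set $\phi_a=\delta^P_{a,o}\in A(D)_{ext}$, observe that $\phi$ jumps by $\pm\delta^P$ at each crossing, and trace the oriented circle $D$; the telescoping sum $\phi_o-\phi_o=0$ is $D\cdot\delta^P_D$, and restricting the sum to the left half at $v$ recovers $P_D(v)$.

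For the quasi-index, property (Q0) is immediate from invariance of $\delta^P$, and (Q2) is a direct bigon inspection as in Fig.~\ref{fig:quasiindex_r2}. Property (Q3) is the delicate point: the individual values $\pi^P_D(v_i)$ need not lie in $\bar A(D)_{ext}$, because their definition uses the potentials $\delta^P_x,\delta^P_y,\delta^P_z$ of the triangle arcs, which do not survive the move $f$. However the signed differences $\epsilon(v_i)\pi^P_D(v_i)-\epsilon(v_j)\pi^P_D(v_j)$ do lie in $\bar A(D)_{ext}$: the triangle potentials cancel exactly as in the computations $\pi(v_1)+\pi(v_3)=a_2-a_1$ and $\pi(v_2)+\pi(v_3)=c_1-c_2$ in the proof of Theorem~\ref{thm:quasiindex}. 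Applying $A(\bar f_{ext})$ to these outer labels and using that $\bar f_{ext}$ fixes arcs outside the triangle identifies them with the corresponding differences on $\bar D'_{ext}$, proving (Q3).

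Finally, the decomposition $\delta^P_D=\sum_{v\in\V(D)}\pi^P_D(v)\cdot D^r_v+\rho(D)\cdot D$ is the same topological computation on the Gauss diagram as in the second part of Theorem~\ref{thm:quasiindex}: subtracting $\pi^P_D(v)\cdot D^r_v$ kills the value of $\delta^P_D$ on each chord, leaving a cycle on the core circle which must be a scalar multiple $\rho(D)\cdot D$ in $H_1(G(D),A(D)_{ext})$. Uniqueness of $\rho(D)$ follows from the splitting $H_1(G(D),A(D)_{ext})=A(D)_{ext}^{\V(D)}\oplus A(D)_{ext}$ recorded in the remark after Theorem~\ref{thm:quasiindex}. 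The main obstacle is really only (Q3): one must check, case by case over the eight types of third Reidemeister move, that the combination of $\pi^P_D(v_i)$'s appearing actually lands in the subgroup $\bar A(D)_{ext}$ on which $A(\bar f_{ext})$ is defined, and that its image matches the analogous combination on $D'$ via (P3+) applied on $\bar D_{ext}$.
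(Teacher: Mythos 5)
Your proposal is correct and follows essentially the same route the paper intends: the paper states this theorem without a separate printed proof, relying on the preceding remark (items 1--3 about $D_{ext}$, $A(f_{ext})$ and $A(\bar f_{ext})$) together with the arguments of Theorems~\ref{thm:parity_cycle} and~\ref{thm:quasiindex} transplanted into $A(D)_{ext}$, which is exactly what you do. In particular you correctly isolate the only genuinely new point, namely that for (Q3) the individual values $\pi^P_D(v_i)$ need not lie in $\bar A(D)_{ext}$ while the signed differences do, matching the paper's own discussion.
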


Let us formulate an inverse statement.

\begin{definition}\label{def:crossing_tribe}
Let $\K$ be a virtual knot and $\mathfrak K$ be the category of its diagrams. A \emph{tribal system} is a family of partitions $\mathfrak C(D)$ of the sets $\V(D)$, $D\in\mathfrak K$, into subsets $C\in\mathfrak C(D)$ called \emph{tribes} such that
\begin{itemize}
  \item[(T0)] Let $f\colon D\to D'$ be a Reidemeister move, $v_1,v_2\in\V(D)$ and $v'_1=f_*(v_1)$ and $v'_2=f_*(v_2)$ be the correspondent crossings in $D'$. Then if $v_1$ and $v_2$ belong to one tribe in $\mathfrak C(D)$ then $v'_1$ and $v'_2$ also belong to one tribe in $\mathfrak C(D')$;
  \item[(T2)] If $f\colon D\to D'$ is a decreasing second Reidemeister move and $v_1$ and $v_2$ are the disappearing crossings then $v_1$ and $v_2$ belong to one tribe in $\mathfrak C(D)$.
\end{itemize}
\end{definition}

\begin{remark}
1. Tribal system are in the same sort of relationship with indices as parity functors with parities. Given a quasi-index $\pi$ on diagrams of a virtual knot $\K$ with coefficients in a group $A$, consider the partitions $\mathfrak C^\pi(D)=\{\pi_D^{-1}(x)\}_{x\in A}$ where $\pi_D\colon\V(D)\to A$, $D\in\mathfrak K$, is the quasi-index map. Then $\mathfrak C^\pi$ is a tribal system on the diagrams of the knot $\K$.

2. Property (T0) implies that any Reidemeister move $f\colon D\to D'$ induces a partial bijection $f_{\mathfrak C}\colon\mathfrak C(D)\to\mathfrak C(D')$ where $f_{\mathfrak C}(C)=C'$ if there exists $v\in C\in\mathfrak C(D)$ such that $f_*(v)\in C'\in\mathfrak C(D')$.

3. Given a tribal system $\mathfrak C$, the sign function splits any tribe $C\in\mathfrak C(D)$, $D\in\mathfrak K$, into two subsets $C^+$ and $C^-$ we call \emph{phratries}. By definition
\[
C^\pm=\{v\in C\mid sgn(v)=\pm 1\}.
\]
The phratries $C^+$ and $C^-$ are called \emph{dual}.

We can refine the properties (T0) and (T2) as follows:
\begin{itemize}
  \item[($\Phi0$)] for any Reidemeister move $f\colon D\to D'$ and any crossings $v_1, v_2\in\V(D)$ such that the crossings $f_*(v_1)$ and $f_*(v_2)$ exist, if $v_1$ and $v_2$ belong to one phratry then $f_*(v_1)$ and $f_*(v_2)$ belong to one phratry too.
  \item[($\Phi2$)] If $f\colon D\to D'$ is a decreasing second Reidemeister move and $v_1$ and $v_2$ are the disappearing crossings then $v_1$ and $v_2$ belong to dual phratries.
\end{itemize}
\end{remark}

Let $\mathfrak C$ be a tribal system on the diagrams of a virtual knot $\K$. Consider the family of maps
$\pi^{\mathfrak C}_D\colon \V(D)\to\mathfrak C(D)$, $D\in\mathfrak K$, given by the formula $\pi^{\mathfrak C}_D(v)=C$ if $v\in C\in\mathfrak C(D)$. The maps $\pi^{\mathfrak C}_D$ satisfy the conditions
\begin{itemize}
  \item for any Reidemeister move $f\colon D\to D'$ and any correspondent crossings $v\in\V(D)$ and $v'=f_*(v)\in\V(D')$ we have $\pi^{\mathfrak C}_{D'}(v')=f_{\mathfrak C}(\pi^{\mathfrak C}_D(v))$;
  \item If $f\colon D\to D'$ is a decreasing second Reidemeister move and $v_1$ and $v_2$ are the disappearing crossings then $\pi^{\mathfrak C}_D(v_1)=\pi^{\mathfrak C}_D(v_2)$.
\end{itemize}

Denote the groups $\Z[\mathfrak C(D)]$ by $A(D)$. For any Reidemeister move $f\colon D\to D'$ the partial bijection $f_*\colon\mathfrak C(D)\to\mathfrak C(D')$ induces a partial isomorphism $A(f)\colon A(D)\to A(D')$.

Define the \emph{signature} of the tribal system $\mathfrak C$ as the elements
\begin{equation}\label{eq:tribal_system_signature}
  \sigma(\mathfrak C)_D=-\sum_{v\in\V(D)}\pi^{\mathfrak C}_D(v)\cdot ip_D(v)\in A(D), D\in\mathfrak K.
\end{equation}
Then for any Reidemeister move $f\colon D\to D'$ we have $A(f)(\sigma(\mathfrak C)_D)=\sigma(\mathfrak C)_{D'}$.

Consider the groups $\hat A(D)=A(D)/\left<\sigma(\mathfrak C)_D\right>\oplus\Z$, $D\in\mathfrak K$.

Given a Reidemeister move $f\colon D\to D'$, define partial isomorphisms $\hat A(f)\colon \hat A(D)\to \hat A(D')$ by the formulas:
\begin{itemize}
\item if $f$ is an increasing first Reidemeister move and $v'\in\V(D')$ is the new crossing then we set
\[
\hat A(f)(x,y)=\left(A(f)(x)+k(f)\cdot\pi^{\mathfrak C}_{D'}(v')y,y\right), \quad (x,y)\in\hat A(D),
\]
where $k(f)=0$ if the loop crossing $v'$ is of types $r_+$ or $r_-$, and $k(f)=1$ if $v'$ is of type $l_+$ or $l_-$;

\item if $f$ is a decreasing first Reidemeister move and $v\in\V(D)$ is the disappearing crossing then we set
\[
\hat A(f)(x,y)=\left(A(f)(x-k(f)\cdot\pi^{\mathfrak C}_{D}(v)y),y\right), \quad (x,y)\in\hat A(D),
\]
where $k(f)=0$ if the loop crossing $v$ is of types $r_+$ or $r_-$, and $k(f)=1$ if $v$ is of type $l_+$ or $l_-$;

\item if $f$ is an increasing second Reidemeister move and $v'_1,v'_2\in\V(D')$ are the new crossings then we set
\[
\hat A(f)(x,y)=\left(A(f)(x)+\pi^{\mathfrak C}_{D'}(v'_1)y,y\right), \quad (x,y)\in\hat A(D);
\]

\item if $f$ is a decreasing second Reidemeister move and $v_1,v_2\in\V(D)$ are the disappearing crossings then we set
\[
\hat A(f)(x,y)=\left(A(f)(x-\pi^{\mathfrak C}_{D}(v_1)y),y\right), \quad (x,y)\in\hat A(D);
\]

\item if $f$ is a third Reidemeister move then we set
\[
\hat A(f)(x,y)=\left(A(f)(x),y\right), \quad (x,y)\in\hat A(D).
\]
\end{itemize}

Note that the map $\hat A(f)$ is defined not for all elements $(x,y)\in\hat A(D)$.

\begin{theorem}\label{thm:tribal_system_cycle_parity}
  Let $\mathcal C$ be a tribal system on the diagrams of a virtual knot $\K$. Then the formula
\begin{equation}\label{eq:tribal_system_cycle}
  \hat\delta^{\mathfrak C}_D=\sum_{v\in\V(D)}\pi^{\mathfrak C}_D(v)\cdot D^r_v \oplus D\in C_1(D,\hat A(D))
\end{equation}
defines a normalized invariant 1-cycle (in the sense of Theorem~\ref{thm:parity_cycle_quasiindex_functor}).

The formula
\begin{equation}\label{eq:tribal_system_parity_functor}
P^{\mathfrak C}_D(v)=\left(\sum_{v'\in\V(D)}\pi^{\mathfrak C}_D(v')\cdot(D^l_v\cdot D^r_{v'}),\ -ip_D(v)\right)
\end{equation}
defines an oriented parity functor on diagrams of the knot $\mathcal K$ with coefficients in the groups $\hat A(D)$.
\end{theorem}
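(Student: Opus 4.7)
The plan is to parallel the proofs of Theorems~\ref{thm:parity_cycle_inverse}, \ref{thm:quasiindex_parity}, and especially \ref{thm:quasiindex_parity_functor}, interpreting the tribal system $\mathfrak C$ as a ``universal quasi-index'' $\pi^{\mathfrak C}$ taking values in the groups $A(D)=\Z[\mathfrak C(D)]$, which now genuinely vary from diagram to diagram but are linked by the partial isomorphisms $A(f)$ induced by the partial bijections $f_{\mathfrak C}$.

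First I would check the two structural properties of $\hat\delta^{\mathfrak C}_D$. Being a $1$-cycle is immediate, since each $D^r_v$ and $D$ is itself a $1$-cycle in $\tilde D$. For normalization I would compute $D\cdot\hat\delta^{\mathfrak C}_D$ componentwise: the second coordinate is $D\cdot D=0$ on the oriented surface $AD(D)$, while the first coordinate is
\[
\sum_{v\in\V(D)}\pi^{\mathfrak C}_D(v)\cdot(D\cdot D^r_v)=-\sum_{v\in\V(D)}\pi^{\mathfrak C}_D(v)\cdot ip_D(v)=\sigma(\mathfrak C)_D,
\]
using the identity $D\cdot D^r_v=-ip_D(v)$ which is the same calculation underlying the ``constant parity cycle'' example. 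This vanishes in $A(D)/\langle\sigma(\mathfrak C)_D\rangle$, giving normalization.

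Next I would prove invariance under each Reidemeister move. For each move $f\colon D\to D'$ I would compare $\hat\delta^{\mathfrak C}_D$ and $\hat\delta^{\mathfrak C}_{D'}$ on correspondent arcs by contracting the region of the move to a point, exactly as in the proof of Proposition~\ref{prop:reminder_reidemeister}. Under this contraction the first coordinate picks up an error term from crossings created or destroyed by the move: $k(f)\cdot\pi^{\mathfrak C}_{D'}(v')\cdot D$ for an increasing R1 (with $k(f)$ distinguishing loop type), $\pi^{\mathfrak C}_{D'}(v'_1)\cdot D$ for an increasing R2 (the two new crossings lie in the same tribe by property (T2), so their common tribe label factors out of $D'^r_{v'_1}+D'^r_{v'_2}\sim D$), and no error for R3 (since by (T0) the tribes of the three crossings are preserved by $f_{\mathfrak C}$ and the right halves $D'^r_{v'_i}=D^r_{v_i}$ agree in homology as noted in the proof of Proposition~\ref{prop:reminder_reidemeister}(3)). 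Decreasing moves are handled by inverting these formulas. These errors are precisely absorbed by the partial isomorphisms $\hat A(f)$ as defined, since the second coordinate $y$ equals $1$ on arcs of $D$ and thus carries the $D$-factor into the first coordinate. This establishes that $\hat\delta^{\mathfrak C}_D$ is normalized and invariant in the sense of Theorem~\ref{thm:parity_cycle_quasiindex_functor}.

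For the parity functor $P^{\mathfrak C}$, the formula~\eqref{eq:tribal_system_parity_functor} is nothing but the intersection pairing $D^l_v\cdot\hat\delta^{\mathfrak C}_D$ expanded as $\bigl(\sum_{v'}\pi^{\mathfrak C}_D(v')(D^l_v\cdot D^r_{v'}),\;D^l_v\cdot D\bigr)=\bigl(\sum_{v'}\pi^{\mathfrak C}_D(v')(D^l_v\cdot D^r_{v'}),\;-ip_D(v)\bigr)$. Since $\hat\delta^{\mathfrak C}$ is a normalized invariant cycle, axioms (P1), (P2), (P3+) follow from exactly the homological argument of Theorem~\ref{thm:parity_cycle_inverse}: at an R1 crossing the left half is either contractible or homologous to $D$; at an R2 the sum of the two left halves is $D$; at an R3 the signed sum is a multiple of $D$. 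Property (P0) follows from the invariance of $\hat\delta^{\mathfrak C}$ via the contraction map $h$, which preserves $[D^l_v]$ and compatibly transports $\hat A(f_{ext})$ onto $\hat A(f)$ on correspondent crossings.

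The main obstacle, I expect, will be the bookkeeping in the invariance step: for each of the R1 loop types and each of the R2/R3 variants one must verify that the specific partial isomorphism $\hat A(f)$ prescribed in the setup exactly matches the homological increment of the cycle, keeping careful track of the small arcs created by increasing moves (which carry the second coordinate $y=1$ but are absent from or duplicated among the various $D^r_{v'}$'s). One must also confirm that the collection $\{\hat A(f)\}$ is consistent enough to satisfy Definition~\ref{def:oriented_parity_functor}; this is mechanical but tedious, and ultimately reduces to the change-of-reminder formulas of Proposition~\ref{prop:reminder_reidemeister} applied tribe-wise.
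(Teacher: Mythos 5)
Your proposal is correct and follows essentially the same route as the paper, which simply states that the theorem is proved by the reasoning of Theorem~\ref{thm:quasiindex_parity_functor}: properties (P1)--(P3+) from intersecting $D^l_v$ with a normalized cycle as in Theorem~\ref{thm:parity_cycle_inverse}, and (P0) from the change-of-reminder formulas of Proposition~\ref{prop:reminder_reidemeister} being absorbed tribe-wise by the partial isomorphisms $\hat A(f)$. Your write-up just makes explicit the bookkeeping that the paper leaves implicit.
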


The theorem can proved with the reasoning of Theorem~\ref{thm:quasiindex_parity_functor}.

\section{Parity cycle and parities on virtual links}\label{sect:links}

Let $\mathcal L=K_1\cup\dots\cup K_d$ be an oriented virtual link with $d$ components, and $\mathfrak L$ be the category of its diagrams. Let $A$ be an abelian group.

Let $p$ be an oriented parity on the diagrams of the link $\mathcal L$ with coefficients in the group $A$.

Given a diagram $D\in\mathfrak L$, one can define the parity cycle $\delta^p_D$ by the formula~\eqref{eq:parity_cycle}. Using the reasonings of Theorem~\ref{thm:parity_cycle}, one can prove that $\delta^p$ is a normalized invariant cycle. The normalization condition for links takes the form:
\[
D_i\cdot\delta^p_D=0
\]
for any component $D_i$ of the link diagram $D$.

On the other hand, the intersection formula~\eqref{eq:intersection_formula} does not work in general because there is no notion of a knot half for a mixed crossing in a link. Nevertheless, the parity cycle defines some kind of relative parity (for relative parities see~\cite{KryM}). More precisely, the following theorem hold.

\begin{theorem}\label{thm:relative_parity_cycle}
Let $D=D_1\cup\dots\cup D_d$ be a diagram of an oriented virtual link $\mathcal L$, and $p$ be an oriented parity on the diagrams of the link $\mathcal L$ with coefficients in an abelian group $A$. Let $\delta^p$ be the parity cycle of $p$.
\begin{enumerate}
   \item For any self crossing $v$ of a component $D_i$ its parity is equal
\begin{equation}\label{eq:intersection_formula_selfcrossing}
  p_D(v)=(D_i)^l_v\cdot\delta^p_D
\end{equation}
where $(D_i)^l_v$ is the left half of the component $D_i$ at the crossing $v$.
   \item For any crossings $v$ and $w$ of components $D_i$ and $D_j$ denote the path from $v$ to $w$ in $D_i$ (along the orientation of the component) by $\gamma_1$, and the path from $w$ to $v$ in $D_j$ by $\gamma_2$ (Fig.~\ref{fig:relative_parity}). Let $\gamma=\gamma_1\gamma_2$. Then
\begin{equation}\label{eq:intersection_formula_relative_parity}
  \eta_\gamma(v) p_D(v)+\eta_\gamma(w)p_D(w)=\gamma\cdot\delta^p_D
\end{equation}

\begin{figure}[h]
\centering\includegraphics[width=0.4\textwidth]{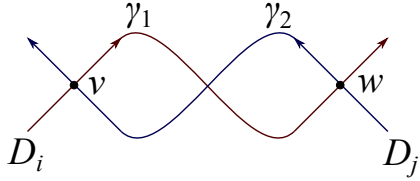}
\caption{Paths $\gamma_1$ and $\gamma_2$}\label{fig:relative_parity}
\end{figure}

where $\eta_\gamma(v)$ and $\eta_\gamma(w)$ are the incidence indices of the crossings $v$ and $w$ to the cycle $\gamma$ (see Fig.~\ref{fig:eta_incidence}).
 \end{enumerate}

 \begin{figure}[h]
\centering\includegraphics[width=0.15\textwidth]{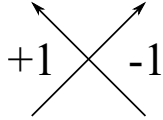}
\caption{Incidence index $\eta$}\label{fig:eta_incidence}
\end{figure}
\end{theorem}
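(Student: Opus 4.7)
The entire proof rests on the local potential formalism from Section~\ref{sect:potentials}. Everything in that section --- the definition of $\delta_{a,b}$, the identity $p_D(v)=\delta_{a,d}$ at a crossing (Proposition~\ref{prop:parity_potential}), and the key rule that passing a crossing along one strand shifts $\phi_a=\delta_{a,o}$ by $\pm\delta$ of an arc on the other strand (used in the proof of Theorem~\ref{thm:parity_cycle}) --- is stated in terms of pairs of arcs and local Reidemeister moves, so it applies verbatim to multi-component diagrams after fixing a single base arc $o\in\mathcal A(D)$ on any one component.

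For part 1, the four arcs at the self-crossing $v$ all lie on $D_i$, so $(D_i)^l_v$ is a genuine cycle and Proposition~\ref{prop:parity_potential} still gives $p_D(v)=\phi_a-\phi_d$ with $a,d$ arcs of $D_i$ adjacent to $v$. Tracing $\phi$ along $(D_i)^l_v$ from $d$ back to $a$ expresses $\phi_a-\phi_d$ as the sum $\sum_{v'\in(D_i)^l_v}\Delta\phi(v')$, where $v'$ runs over all crossings met by $(D_i)^l_v$; these may be self-crossings of $D_i$ or mixed crossings of $D_i$ with another component, but the local rule applies to both. Each increment equals the signed contribution of $v'$ to $(D_i)^l_v\cdot\delta^p_D$ exactly as in the knot case, yielding~\eqref{eq:intersection_formula_selfcrossing}.

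For part 2, traverse the closed cycle $\gamma=\gamma_1\gamma_2$ once. Along the interior of $\gamma_1$ and $\gamma_2$ the cumulative change of $\phi$ is the sum $S$ of increments at interior crossings, and by the same shifting argument $S$ equals the interior contribution to $\gamma\cdot\delta^p_D$. At each corner ($v$ and $w$), where $\gamma$ switches from one component to the other, the potential $\phi$ jumps by a quantity $J_v$ (resp.\ $J_w$) equal to the difference of the potentials of the outgoing and incoming arcs at the corner. Closedness of $\gamma$ forces $S+J_v+J_w=0$, while the slightly shifted cycle also picks up corner contributions $C_v,C_w$ coming from the other arcs at $v,w$ that the smoothed corner sweeps past. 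Thus $\gamma\cdot\delta^p_D=-J_v+C_v-J_w+C_w$, and the formula~\eqref{eq:intersection_formula_relative_parity} reduces to the purely local identity $-J_v+C_v=\eta_\gamma(v)\,p_D(v)$ at each corner.

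Verifying this local identity is the main obstacle. Using $p_D(v)=\phi_a-\phi_d$ together with $\phi_c-\phi_a=\delta_d$ and $\phi_d-\phi_b=-\delta_a$, one expresses $J_v$ as $\pm p_D(v)$ plus a sum of two of the potentials $\delta_a,\delta_b,\delta_c,\delta_d$, depending on which component is the over-strand at $v$ and on the turning direction of $\gamma$ there. The corner contribution $C_v$ is in turn the signed sum of $\delta$'s of the arcs crossed by the shifted smoothed corner, and a case-by-case inspection of the four possible configurations at $v$ shows that these $\delta$-terms cancel against those appearing in $-J_v$, leaving exactly $\eta_\gamma(v)\,p_D(v)$ with the sign prescribed by Fig.~\ref{fig:eta_incidence}. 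The difficulty is purely bookkeeping: one must fix consistent conventions for the orientation of $AD(D)$, the direction of the shift, and the sign of $\eta_\gamma$ so that all four configurations at each corner align.
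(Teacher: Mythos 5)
Your general framework is the right one: the potential $\phi(a)=\delta_{a,o}$ with a single base arc $o$ makes sense on a multi-component diagram, and your treatment of part 1 is essentially the paper's (it simply repeats the proof of Theorem~\ref{thm:parity_cycle}, the increments $\Delta\phi$ at mixed crossings being governed by the same local rule as at self-crossings).

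For part 2, however, the step you yourself call ``the main obstacle'' --- the local corner identity $-J_v+C_v=\eta_\gamma(v)\,p_D(v)$ --- is the entire content of the theorem, and it is asserted rather than proved: ``a case-by-case inspection \dots shows'' does not discharge it, particularly because both $J_v$ and $C_v$ depend on conventions (turning direction of $\gamma$ at the corner, direction of the shift) that you never fix. This is a genuine gap. It is also an avoidable one: the paper never decomposes $\gamma\cdot\delta^p_D$ into interior and corner contributions. Instead it applies the telescoping argument from the proof of Theorem~\ref{thm:parity_cycle} to each open path $\gamma_1,\gamma_2$ separately, choosing their initial and terminal arcs to be exactly the arcs appearing in Proposition~\ref{prop:parity_potential}. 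If $a_1,d_1$ are the arcs at $v$ with $p_D(v)=\phi(a_1)-\phi(d_1)$ and $a_2,d_2$ the analogous arcs at $w$, then $\gamma_1\cdot\delta^p_D=\phi(a_2)-\phi(a_1)$ and $\gamma_2\cdot\delta^p_D=\phi(d_1)-\phi(d_2)$, whence
\[
\gamma\cdot\delta^p_D=\bigl(\phi(a_2)-\phi(d_2)\bigr)-\bigl(\phi(a_1)-\phi(d_1)\bigr)=p_D(w)-p_D(v)
=\eta_\gamma(v)p_D(v)+\eta_\gamma(w)p_D(w)
\]
since $\eta_\gamma(v)=-1$ and $\eta_\gamma(w)=1$ in this configuration; the other configurations are analogous. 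With this choice of endpoints your jump $J_v$ is exactly $p_D(v)$ and the corner contributions $C_v,C_w$ simply do not arise, so your decomposition manufactures the very case analysis it then leaves undone. Either carry out all corner configurations explicitly with fixed conventions, or restructure the argument as above.
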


\begin{proof}
The proof of the first statement of the theorem repeats the proof in Theorem~\ref{thm:parity_cycle}.

Like in Theorem~\ref{thm:parity_cycle}, fix an arc $o$ of the diagram $D$ and define the potential function $\phi(a)=\delta_{a,o}$ on the arcs of $D$.

Let $v$ and $w$ of components $D_i$ and $D_j$. Denote arcs incident to $v$ and $w$ by $a_1$, $d_1$, $a_2$, $d_2$ as shown in Fig.~\ref{fig:relative_parity_proof}. By Proposition~\ref{prop:parity_potential} $p_D(v)=\phi(a_1)-\phi(d_1)$ and $p_D(w)=\phi(a_2)-\phi(d_2)$. By the proof of Theorem~\ref{thm:parity_cycle} $\gamma_1\cdot\delta^p_D=\phi(a_2)-\phi(a_1)$ and $\gamma_2\cdot\delta^p_D=\phi(d_1)-\phi(d_2)$. Then
\begin{multline*}
\gamma\cdot\delta^p_D =(\gamma_1+\gamma_2)\cdot\delta^p_D=\phi(a_2)-\phi(d_2)+\phi(d_1)-\phi(a_1)=\\
p_D(w)-p_D(v)= \eta_\gamma(v) p_D(v)+\eta_\gamma(w)p_D(w)
\end{multline*}
because $\eta_\gamma(v)=-1$ and $\eta_\gamma(w)=1$.

\begin{figure}[h]
\centering\includegraphics[width=0.4\textwidth]{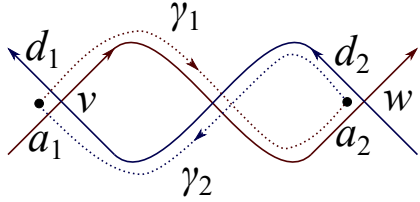}
\caption{Proof of the intersection formula~\eqref{eq:intersection_formula_relative_parity}}\label{fig:relative_parity_proof}
\end{figure}
The proof for other configurations is analogous.
\end{proof}

\begin{example}[Link parity]\label{exa:link_parity_general}
Let $\mathcal L=K_1\cup\dots\cup K_d$ be an oriented virtual link with $d$ components, and $A$ be an abelian group. Choose an arbitrary $(d-1)$-vector $l=(l_1,\dots, l_{d-1})$ in $A^{d-1}$. Denote also $l_d=0$.

Let $D=D_1\cup\dots\cup D_d$ be a diagram of the link $\mathcal L$. For a crossing $v\in\V(D)$ of components $D_i$ and $D_j$ (Fig.~\ref{fig:link_crossing}), its \emph{link parity} $lp^l_D(v)$ is defined by the formula
\begin{equation}\label{eq:link_parity_g}
  lp^l_D(v)=l_i-l_j.
\end{equation}

\begin{figure}[h]
\centering\includegraphics[width=0.15\textwidth]{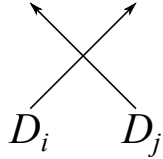}
\caption{Link crossing}\label{fig:link_crossing}
\end{figure}

Then by definition $lp^l(v)=0$ for any self-crossing if $D$. Hence, the parity cycle of the link parity is trivial: $\delta^{lp^l}=0$. Thus, a parity on links can not be restored from the parity cycle in general.
\end{example}

We can specify relation between parities and parity cycles with the following proposition.

\begin{proposition}\label{prop:link_parity_kernel}
Let $\mathcal L$ be an oriented virtual link with $d$ components, and let $A$ be an abelian group. Let $p_1, p_2$ be  oriented parities on the diagrams of the link $\mathcal L$ with coefficients in the group $A$, and $\delta^{p_1}$ and $\delta^{p_2}$ be the parity cycles. Then $\delta^{p_1}=\delta^{p_2}$ if and only if $p_2=p_1+lp^l$ for some $l\in A^{d-1}$.
\end{proposition}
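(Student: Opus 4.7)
I would treat the two directions separately, relying on the potential function from Section~\ref{sect:potentials}.

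$(\Leftarrow)$ The assignment $p\mapsto\delta^p$ from~\eqref{eq:parity_cycle} is $A$-linear, so it suffices to check that $\delta^{lp^l}\equiv 0$. For any arc $a$ of any diagram, the value $\delta^{lp^l}_a$ is the $lp^l$-parity of the crossing produced by an R2 between $a$ and $\bar a$; both strands of this R2 belong to the same component, so the crossing is a self-crossing, at which $lp^l$ vanishes by construction.

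$(\Rightarrow)$ Put $p := p_2-p_1$, an oriented parity with $\delta^p_D=0$ as a 1-chain in every diagram $D$. I will exhibit $l\in A^{d-1}$ with $p=lp^l$. Fix $D_d$ as the distinguished component. In each diagram $D$ pick a reference arc $o\in\mathcal A(D)$ on $D_d$ and set $\phi_a:=\delta_{a,o}$. The computation in the proof of Theorem~\ref{thm:parity_cycle} shows that passing through a classical crossing shifts $\phi$ along each strand by $\pm\delta_{(\cdot)}$ of a transverse arc; because $\delta^p_D\equiv 0$ forces every $\delta_a=0$, the potential $\phi$ is constant along every oriented strand and therefore constant on each link component. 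Denote its common value on $D_i$ by $l_i^D$; by construction $l_d^D=0$. By Proposition~\ref{prop:parity_potential}, every crossing $v$ satisfies $p_D(v)=\phi_a-\phi_d$, which is $0$ at a self-crossing (recovering~\eqref{eq:intersection_formula_selfcrossing}) and equals $l_i^D-l_j^D$ at a mixed crossing between $D_i$ and $D_j$, matching the definition of $lp^l$ in Fig.~\ref{fig:link_crossing} after adjusting for the sign convention there.

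It remains to show that the scalars $l_i^D$ depend only on $i$. Given a Reidemeister move $f\colon D\to D'$, pick correspondent reference arcs $o\leftrightarrow o'$ on $D_d$. For any correspondent pair $a\leftrightarrow a'$, the R2 defining $\delta^D_{a,o}$ and the R2 defining $\delta^{D'}_{a',o'}$ can be performed in localized disjoint regions, so $f$ commutes with these R2s and produces correspondent crossings in the extended diagrams; property (P0) applied to $p$ then gives $\phi^D_a=\phi^{D'}_{a'}$. Hence $l_i^D=l_i^{D'}$ whenever the component $D_i$ has an arc correspondent between $D$ and $D'$. For a new arc in $D'$ with no correspondent (the middle piece of an R2 bigon or the loop arc of an R1), the constancy of $\phi^{D'}$ on its component forces it to share the value $l_i^{D'}$ already determined by correspondent arcs. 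Iterating along a Reidemeister sequence connecting any two diagrams yields diagram-independent $l_i\in A$, and $p=lp^l$ as parities.

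The main obstacle I anticipate is this last step: establishing that the component-potentials $l_i^D$ are genuine link invariants. The subtlety is that the naturality of $\delta_{a,o}$ under Reidemeister moves (needed to transport $\phi$ between diagrams) has to be combined with the intra-diagram component-constancy to cover arcs that appear newly under R1 and R2 and have no direct correspondent.
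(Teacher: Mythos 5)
Your proof is correct, and while the easy direction coincides with the paper's, your argument for the converse takes a recognizably different route. The paper sets $p=p_2-p_1$, invokes Theorem~\ref{thm:relative_parity_cycle} with $\delta^p=0$ to see that $p$ vanishes on self-crossings and that its value on a mixed crossing depends only on the unordered data of which components meet there, obtaining constants $l_{ij}$; it then applies property (P3+) to an auxiliary configuration in which three components pairwise intersect in a small triangle (Fig.~\ref{fig:link_3crossing}) to get the cocycle identity $l_{ij}+l_{jk}+l_{ki}=0$, and finally solves this to write $l_{ij}=l_i-l_j$ with $l_i=l_{id}$. You instead work one level down, directly with the potential $\phi_a=\delta_{a,o}$: the vanishing of all $\delta_a$ makes $\phi$ constant on each component, and Proposition~\ref{prop:parity_potential} then hands you $p_D(v)=\phi_a-\phi_d=l^D_i-l^D_j$ immediately, so the coboundary structure is built in and no triangle configuration or appeal to (P3+) is needed. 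Since Theorem~\ref{thm:relative_parity_cycle} is itself proved via these potentials, the two arguments rest on the same machinery, but yours shortcuts the cocycle-then-solve step; the price is that you must (and do) argue separately that the component constants $l^D_i$ are diagram-independent, which you handle by transporting $\phi$ through Reidemeister moves exactly as in the invariance part of Theorem~\ref{thm:parity_cycle} --- in fact your treatment of this point is more explicit than the paper's one-line appeal to (P0). Two small remarks: the independence of $\phi$ from the choice of reference arc $o$ within $D_d$ (needed so that ``correspondent reference arcs'' is an unambiguous normalization) follows from the component-constancy you already established, and is worth a sentence; and your closing remark about the sign convention of Fig.~\ref{fig:link_crossing} is harmless, since replacing $l$ by $-l$ absorbs any discrepancy without affecting $l_d=0$.
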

\begin{proof}
If $p_2=p_1+lp^l$ then $\delta^{p_2}=\delta^{p_1}+\delta^{lp^l}=\delta^{p_1}+0=\delta^{p_1}$.

Assume $\delta^{p_1}=\delta^{p_2}$ then the parity $p=p_2-p_3$ has zero parity cycle. For any $i,j\in\{1,\dots, n\}$ take a diagram $D$ of the link $\mathcal L$ such that the components $D_i$ and $D_j$ intersect at a crossing $v$. Denote $l_{ij}=p_D(v)$. By property (P0) and Theorem~\ref{thm:relative_parity_cycle} the value $l_{ij}$ does not depend on the choice of the crossing $v$ and the diagram $D$. Thus, the parity of any crossing depends only on the components which intersect at this crossing, and is equal to some $l_{ij}$.

Property (P3+) applied to the configuration in Fig.~\ref{fig:link_3crossing} implies that $l_{ij}+l_{jk}+l_{ki}=0$ for all $i,j,k$.

\begin{figure}[h]
\centering\includegraphics[width=0.2\textwidth]{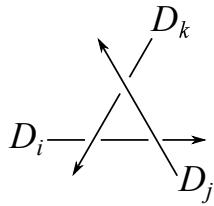}
\caption{Intersections of components $D_i$, $D_j$, $D_k$}\label{fig:link_3crossing}
\end{figure}

Denote $l_i=l_{id}$, $i=1,\dots,d$. Then $l_d=0$ and $l_{ij}=l_i-l_j$ for all $i,j=1,\dots,d$. Thus, $p=lp^l$ with $l=(l_1,\dots,l_{d-1})$.
\end{proof}

\begin{remark}
We can reformulate the proposition above as follows (cf. Remark~\ref{rem:parity_cycle_isomorphism}). Let $\mathcal{P}(\mathcal L,A)$ be the set of oriented parities with coefficients in $A$ on the diagrams of the link $\mathcal L$ and $\mathcal{NIC}(\mathcal L,A)$ be the set of normalized invariant cycles with coefficients in $A$ on the diagrams of $\mathcal L$. The formula~\eqref{eq:parity_cycle} defines a homomorphism
\[
\Delta\colon \mathcal{P}(\mathcal L,A)\to\mathcal{NIC}(\mathcal L,A).
\]
Proposition~\ref{prop:link_parity_kernel} states that $\ker\Delta=\mathcal{LP}(\mathcal L,A)$ is the subgroup consisting of link parities. Note that $\mathcal{LP}(\mathcal L,A)\simeq A^{d-1}$.

On the other hand, the example below shows that in general the map $\Delta$ is not an epimorphism.
\end{remark}

\begin{example}\label{exa:link_cycle_counterexample}
Let $D$ be the diagram of a link $\mathcal L$ with three components shown in Fig.~\ref{fig:relative_parity_example} left. Consider a constant 1-cycle $\delta$ on $\mathcal L$ with coefficients in $\Z_2$: $\delta_D(a)=1$ for any arc $a\in\mathcal A(D)$. Then $\delta$ is a normalized invariant cycle.
\begin{figure}[h]
\centering\includegraphics[width=0.9\textwidth]{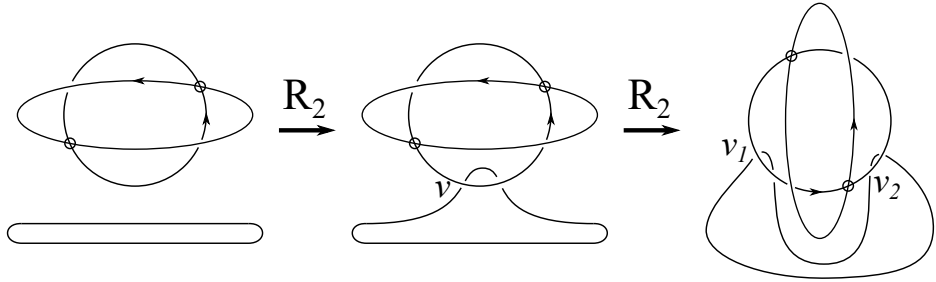}
\caption{Link diagrams}\label{fig:relative_parity_example}
\end{figure}

Assume that there is a parity $p$ with coefficients in $\Z_2$ such that $\delta=\delta^p$. Apply a second Reidemeister move to get a diagram $D'$ with a crossing $v$ (Fig.~\ref{fig:relative_parity_example} middle). There are two second Reidemeister moves $f_1$ and $f_2$ from $D'$ to the diagram $D''$ shown in Fig.~\ref{fig:relative_parity_example} right such that $(f_i)_*(v)=v_i$, $i=1,2$. Then by property (P0) $p_{D''}(v_1)=p_{D'}(v)=p_{D''}(v_2)$. But by the formula~\eqref{eq:intersection_formula_relative_parity} we have $p_{D''}(v_1)-p_{D''}(v_2)=1$. Thus, there is no parity whose parity cycle is $\delta$.

Note that this contradiction can be avoided if one considers parity functors.
\end{example}

\section{Intersection formula for flat knots}\label{sect:flat_knots}

Recall that a \emph{flat link} is an equivalence class of virtual diagrams modulo isotopies, Reidemeister and detour moves, and crossing switches (Fig.~\ref{fig:crossing_switch}). In other words, flat links ignore the information on under- and overcrossing.
\begin{figure}[h]
\centering\includegraphics[width=0.3\textwidth]{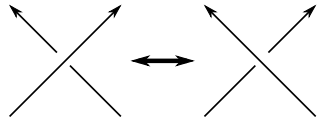}
\caption{Crossing switch}\label{fig:crossing_switch}
\end{figure}

The reader may be noted that most of results of Sections~\ref{sect:potentials} and~\ref{sect:quasiindex} (including Theorems~\ref{thm:parity_cycle}, \ref{thm:parity_cycle_inverse}, \ref{thm:quasiindex} and~\ref{thm:quasiindex_parity}) hold for flat knots. On the other hand the formula~\eqref{eq:quasiindex_cycle_base} and the derived ones do not fit for flat knots because flat knots have not the sign function.

Let $\K$ be a flat knot, and $\pi$ be an index on the diagrams of the knot $\K$ with coefficients in an abelian group $A$. Note that loop crossings (Fig.~\ref{fig:loop_types}) in flat knot diagrams has only one index value that we denote by $\pi^\circ\in A$. Let us formulate an analogue of Corollary~\ref{cor:index_parity_reduced} for flat knots.

\begin{theorem}
Let $\pi$ be an index on diagrams of a flat knot $\K$ with coefficients in an abelian group $A$ and let $\pi^\circ\in A$ be the index value of the loop crossings. Then the formula
\begin{equation}\label{eq:index_flat_reduction_delta}
\bar\delta^\pi_D=\sum_{v\in\V(D)\colon \pi_D(v)\ne\pi^\circ}\pi_D(v)\cdot D^r_v-\sum_{x\in A\setminus\{\pi^\circ\}}\left\lfloor\frac{|\pi^{-1}_D(x)|}{2}\right\rfloor\cdot D
\end{equation}
 defines an invariant 1-cycle $\bar\delta^\pi_D$ and the formula
\begin{equation}\label{eq:index_reduction_parity_flat}
\bar p^\pi_D(v)=\sum_{v'\in\V(D)\colon \pi_D(v')\ne\pi^\circ}\pi_D(v')\cdot(D^l_v\cdot D^r_{v'})+\sum_{x\in A\setminus\{\pi^\circ\}}\left\lfloor\frac{|\pi^{-1}_D(x)|}{2}\right\rfloor\cdot ip_D(v)
\end{equation}
defines an oriented parity $\bar p^\pi$ on diagrams of the flat knot $\K$ with coefficients in the group $\bar A=A/\left<\bar\sigma(\pi)\right>$, where
\[
\bar\sigma(\pi)=D\cdot\bar\delta^\pi_D=-\sum_{v\in\V(D)\colon \pi_D(v)\ne\pi^\circ}\pi_D(v)\cdot ip_D(v)
\]
and $\lfloor\cdot\rfloor$ if the floor function.
\end{theorem}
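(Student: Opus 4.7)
The plan is to mimic the proof of Corollary~\ref{cor:index_parity_reduced}, adapting it to the flat setting where the sign function is unavailable by absorbing the second-Reidemeister increments into the floor-correction term. First I would pass to the $\pi^\circ$-reduction $\bar\pi$ (Definition~\ref{def:index_lplus_reduction} carries over verbatim, noting that in the flat case all loop crossings share the single value $\pi^\circ$ by Whitney's trick combined with property (I2)), so that loop crossings contribute nothing to the first sum. Once $\bar\delta^\pi_D$ is shown to be an invariant 1-cycle that is normalized in the quotient $\bar A = A/\langle\bar\sigma(\pi)\rangle$, the parity $\bar p^\pi_D(v) = D^l_v\cdot\bar\delta^\pi_D$ is produced by the flat analogue of Theorem~\ref{thm:parity_cycle_inverse}, and expanding this intersection yields exactly formula~\eqref{eq:index_reduction_parity_flat}.

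The invariance check splits into three cases. For a third Reidemeister move I invoke property (I0): the three involved crossings retain their indices, no crossings are created or destroyed, and the multiset $\{|\pi_D^{-1}(x)|\}_{x\in A}$ is preserved, so both sums are untouched. For an R2 move $f\colon D\to D'$ creating crossings $v_1',v_2'$, property (I2) gives $\bar\pi_{D'}(v_1') = \bar\pi_{D'}(v_2') =: x$; if $x=\pi^\circ$ both crossings sit outside the support of both sums, while if $x\neq\pi^\circ$ the first sum grows by $x\cdot(D^r_{v_1'}+D^r_{v_2'})$, which by Fig.~\ref{fig:half_r2} (the identity $D^l_{v_1'}+D^l_{v_2'} = D$ in $H_1(AD(D'),\Z)$ is insensitive to under/over information) equals $x\cdot D$, and simultaneously $|\pi_{D'}^{-1}(x)|$ grows by $2$, bumping the floor by $1$ and generating a compensating $x\cdot D$ in the correction term. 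For an R1 move, the new loop crossing is of index $\pi^\circ$ and is excluded from both sums, so $\bar\delta^\pi$ is unchanged.

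To verify normalization I would compute $D\cdot\bar\delta^\pi_D$ directly: the correction term contributes zero because $D\cdot D = 0$ (intersection is antisymmetric on 1-cycles in an oriented surface), while the first sum gives $\sum_{v\colon\pi_D(v)\neq\pi^\circ}\pi_D(v)\cdot(D\cdot D^r_v) = -\sum_{v\colon\pi_D(v)\neq\pi^\circ}\pi_D(v)\cdot ip_D(v) = \bar\sigma(\pi)$, using $D\cdot D^r_v = -ip_D(v)$ as in the constant parity cycle example. Passing to $\bar A$ makes the cycle normalized, and then the flat-knot versions of Theorems~\ref{thm:parity_cycle} and~\ref{thm:parity_cycle_inverse} (whose proofs never invoke the over/undercrossing structure, so they descend to flat diagrams modulo crossing switches) give the oriented parity and its intersection formula.

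The main obstacle will be the R2 step — specifically making sure that in both coherent and incoherent flat R2 moves one truly has $D^l_{v_1'}+D^l_{v_2'} = D$ in $H_1(AD(D'),\Z)$, so that the first-sum increment is \emph{exactly} $x\cdot D$ and meshes cleanly with the unit jump of the floor function. A secondary obstacle is handling the case where $|\pi_D^{-1}(x)|$ is odd in some diagram but becomes even after an R2: the floor increments by $1$ either way, so the argument still goes through, but one should verify this bookkeeping does not introduce a hidden dependence on diagram choice beyond what is already absorbed into $\bar\sigma(\pi)$.
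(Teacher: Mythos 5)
Your proposal is correct and follows essentially the same route as the paper: exclude the $\pi^\circ$-valued crossings, check invariance move by move with the key step being that an R2 move creating two crossings of index $x\ne\pi^\circ$ adds $x\cdot(D^r_{v'_1}+D^r_{v'_2})$ to the first sum while bumping $\lfloor|\pi_D^{-1}(x)|/2\rfloor$ by one, the two increments cancelling because $D^r_{v'_1}+D^r_{v'_2}-D$ vanishes outside the move region, and then pass to $\bar A$ and invoke the flat analogue of Theorem~\ref{thm:parity_cycle_inverse}. The only cosmetic difference is that you phrase the R2 cancellation homologically where the paper makes the (needed, and true) chain-level observation that the coefficients agree on all arcs outside the bigon; otherwise the arguments coincide.
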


\begin{proof}
Property (I0) of the index $\pi$ implies invariance of $\bar\delta^\pi$ by isotopies and third Reidemeister moves. By definition, $\bar\delta^\pi$ is invariant under first Reidemeister moves.

Let $f\colon D\to D'$ be an increasing second Reidemeister move and $v'_1, v'_2$ be the new crossings. If $\pi_{D'}(v'_1)=\pi_{D'}(v'_2)=\pi^\circ$ then the expressions for $\bar\delta^\pi_D$ and $\bar\delta^\pi_{D'}$ coincide up to replacement $D$ with $D'$ and $D^r_v$ with $D'R_{f_*(v)}$. Thus, $\bar\delta^\pi_D(a)=\bar\delta^\pi_{D'}(a')$ for any correspondent arcs $a\in\mathcal A(D)$ and $a'\in\mathcal A(D')$.

Let $\pi_{D'}(v'_1)=\pi_{D'}(v'_2)\ne\pi^\circ$. For all $x\in A$, $x\ne\pi_{D'}(v_1)$,  we have $|\pi^{-1}_{D'}(x)|=|\pi^{-1}_D(x)|$, and
$|\pi^{-1}_{D'}(\pi_{D'}(v'_1))|=|\pi^{-1}_D(\pi_{D'}(v'_1))|+2$. Then the difference between $\bar\delta^\pi_{D'}$ and $\bar\delta^\pi_{D}$ is equal to
\[
\pi_{D'}(v'_1)(D'^r_{v_1}+D'^r_{v_1}-D).
\]
Since the chain $D'^r_{v_1}+D'^r_{v_1}-D$ vanishes outside the region where the move $f$ occurs, we have $\bar\delta^\pi_D(a)=\bar\delta^\pi_{D'}(a')$ for any correspondent arcs $a\in\mathcal A(D)$ and $a'\in\mathcal A(D')$.
\end{proof}

\begin{remark}
If $A$ is a ring and $\frac 12\in A$ then we can omit the rounding in~\eqref{eq:index_flat_reduction_delta},~\eqref{eq:index_reduction_parity_flat} and write the following formulas for the invariant 1-cycle and the parity induced by an index $\pi$:
\begin{equation}\label{eq:index_flat_reduction_delta12}
\bar\delta^\pi_D=\frac 12\sum_{v\in\V(D)\colon \pi_D(v)\ne\pi^\circ}\pi_D(v)\cdot (D^r_v-D^l_v)
\end{equation}
and
\begin{equation}\label{eq:index_reduction_parity_flat12}
\bar p^\pi_D(v)=\frac 12\sum_{v'\in\V(D)\colon \pi_D(v')\ne\pi^\circ}\pi_D(v')(D^l_v\cdot (D^r_v-D^l_v)).
\end{equation}

\end{remark}

\section{Derived parities of virtual knots}\label{sect:derived_parity}

In this section we look at connection between parities and indices from another (and more direct) side. This will allow us to loop the intersection formula and define derived parities.

\subsection{Signed index}\label{subsect:signed_index}
Note that an (oriented) parity $p$ is not an index in general. Let us introduce an intermediate notion between parities and indices. Let $\K$ be a virtual knot and $\mathfrak K$ be the category of its diagrams. Let $A$ be an abelian group.

\begin{definition}\label{def:signed_index}
 A \emph{signed index $\tau$ on the diagrams of the knot $\K$ with coefficients in $A$} is a family of maps $\tau_D\colon\V(D)\to A$, $D\in\mathfrak K$, which obeys the properties
\begin{itemize}
\item[(I0)] for any Reidemeister move $f\colon D\to D'$ and any crossings $v\in\V(D)$ and $v'\in\V(D')$ such that $v'=f_*(v)$, one has $\tau_{D}(v)=\tau_{D'}(v')$;
\item[(I2+)] $\tau_D(v_1)=-\tau_D(v_2)$ for any crossings $v_1,\,v_2\in\V(D)$ to which a decreasing second Reidemeister move can be applied.
\end{itemize}
\end{definition}

Let us list some simple properties of signed indices.

\begin{proposition}\label{prop:signed_index_properties}
\begin{enumerate}
  \item Any oriented parity $p$ is a signed index;
  \item The sign function $sgn$ is a signed index with coefficients in $\Z$;
  \item If $\tau_1$ is a signed index with coefficients in a group $A$ and $\tau_1$ is a signed index with coefficients in a group $B$ then their product $\tau_1\otimes\tau_2$ where
\[
(\tau_1\otimes\tau_2)_D(v)=(\tau_1)_D(v)\otimes(\tau_2)_D(v)\in A\otimes B,\ v\in\V(D),
\]
is an index with coefficients in $A\otimes B$;
  \item If $\tau$ is a signed index with coefficients in a group $A$ and $\pi$ is an index with coefficients in a group $B$ then their product $\tau\otimes\pi$ is a signed index with coefficients in $A\otimes B$.
\end{enumerate}
Note that if $\pi_1$ is an index with coefficients in $A$ and $\pi_1$ is an index with coefficients in $B$ then $\pi_1\otimes\pi_2$ is an index with coefficients in $A\otimes B$.
\end{proposition}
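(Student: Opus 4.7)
The four claims are all verified directly from the axioms, with no serious obstacles; the proposition essentially records how the classes of parities, indices and signed indices behave under tensor product. My plan is to check the two parts of Definition~\ref{def:signed_index} (and the analogous conditions for indices) in each case.

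For Claim 1, property (P0) of an oriented parity is literally condition (I0), and property (P2), which reads $p_D(v_1)+p_D(v_2)=0$ for any two crossings removable by a second Reidemeister move, is exactly the signed cancellation condition (I2+). For Claim 2, I would invoke the standard fact that the sign of a crossing is invariant under Reidemeister moves (giving (I0)), together with the observation that the two classical crossings in any second Reidemeister configuration on an oriented diagram necessarily have opposite signs, so $\sign(v_1)=-\sign(v_2)$ (giving (I2+)).

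For Claims 3 and 4 the key algebraic identity is $(-a)\otimes(-b)=a\otimes b$ in $A\otimes B$, together with $(-a)\otimes b=-(a\otimes b)=a\otimes(-b)$. Property (I0) for a tensor product is immediate from (I0) on each factor. For the second Reidemeister axiom, if both factors are signed indices then $\tau_i(v_1)=-\tau_i(v_2)$ for $i=1,2$ and the two minus signs cancel, yielding $\tau_1(v_1)\otimes\tau_2(v_1)=\tau_1(v_2)\otimes\tau_2(v_2)$, i.e.\ the unsigned (I2) condition of an index. If $\tau$ is signed and $\pi$ is an (unsigned) index, then only one minus sign survives and one obtains $\tau(v_1)\otimes\pi(v_1)=-(\tau(v_2)\otimes\pi(v_2))$, i.e.\ (I2+). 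The concluding remark that the tensor of two indices is again an index is the same calculation with no minus signs at all.

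None of these verifications is delicate; the only point worth pausing on is Claim 2, where one must be careful to use the orientation convention so that the two crossings in an R2 configuration really are of opposite sign. Everything else is a one-line check, and no step presents a genuine obstacle.
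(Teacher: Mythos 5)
Your verification is correct and is exactly the intended argument: the paper states these as ``simple properties'' and omits the proof entirely, so your direct check of (I0) and (I2)/(I2+) in each case, using the sign identities in $A\otimes B$ and the fact that an oriented second Reidemeister move produces two crossings of opposite sign, supplies precisely what the author left to the reader. No gaps.
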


\begin{definition}
Let $A$ and $B$ be groups. A map $f\colon A\to B$ is called \emph{odd} if for any $x\in A$  $f(-x)=-f(x)$ in $B$, and the map $f$ is called \emph{even} if for any $x\in A$  $f(-x)=f(x)$.
\end{definition}
By definition, if $f\colon A\to B$ is a group homomorphism then $f$ is odd.

\begin{proposition}\label{prop:signed_index_maps}
Let $\tau$ be a signed index with coefficients in a group $A$ and $f\colon A\to B$ be a map to another group $B$. Then
\begin{itemize}
  \item if $f$ is odd then $f\circ\tau$ is a signed index with coefficients in $B$;
  \item if $f$ is even then $f\circ\tau$ is an index with coefficients in $B$.
\end{itemize}
If $\pi$ be an index with coefficients in $A$ and $f\colon A\to B$ be an arbitrary map then $f\circ\pi$ is an index with coefficients in $B$.
\end{proposition}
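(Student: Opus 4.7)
The plan is a direct verification of the axioms, case by case, using that precomposing or postcomposing with a well-behaved map transports the defining equalities of signed indices and indices.

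First I would write down what $f\circ\tau$ and $f\circ\pi$ mean as families of maps: $(f\circ\tau)_D(v)=f(\tau_D(v))$ for each diagram $D$ and each crossing $v\in\V(D)$. The property (I0) is immediate in every case, because if $f_*\colon\V(D)\to\V(D')$ is induced by a Reidemeister move and $\tau_D(v)=\tau_{D'}(f_*(v))$ (resp.\ $\pi_D(v)=\pi_{D'}(f_*(v))$), then applying $f$ to both sides preserves the equality; no hypothesis on $f$ is needed here.

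The interesting content lies in how $f$ interacts with the identity produced by a decreasing second Reidemeister move. For $\tau$ a signed index, (I2+) gives $\tau_D(v_1)=-\tau_D(v_2)$ whenever $v_1,v_2$ can be removed by such a move. If $f$ is odd, then $f(\tau_D(v_1))=f(-\tau_D(v_2))=-f(\tau_D(v_2))$, which is exactly (I2+) for $f\circ\tau$; so $f\circ\tau$ is a signed index. If instead $f$ is even, then $f(\tau_D(v_1))=f(-\tau_D(v_2))=f(\tau_D(v_2))$, which is (I2) for $f\circ\tau$; so $f\circ\tau$ is an index. For the last assertion, $\pi$ is an index, so $\pi_D(v_1)=\pi_D(v_2)$, and applying any map $f$ gives $f(\pi_D(v_1))=f(\pi_D(v_2))$, i.e.\ (I2) for $f\circ\pi$; no parity hypothesis on $f$ is required because both sides are already equal before $f$ is applied.

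There is no serious obstacle here: the proposition is essentially a bookkeeping statement that the parity data ``$+$ vs.\ $-$'' in the second Reidemeister relation is exactly what distinguishes signed indices from indices, and that odd/even maps convert between these two flavours in the expected way. The only thing to be slightly careful about is the definition of ``odd'' used: I would remark that $f(-x)=-f(x)$ does not require $f$ to be a homomorphism, which is why the statement subsumes, for instance, the map $\tau\mapsto\tau^n$ on $\Z$ for odd $n$ (odd) and for even $n$ (even), producing new (signed) indices from a given signed index such as the crossing sign.
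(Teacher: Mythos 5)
Your verification is correct and is exactly the routine argument intended: the paper in fact states this proposition without proof, and your case-by-case check of (I0) together with the transport of (I2+) to (I2+) (odd $f$), of (I2+) to (I2) (even $f$), and of (I2) to (I2) (arbitrary $f$) is the whole content. Nothing is missing.
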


\begin{example}
Let $p$ be an oriented parity with coefficients in an abelian group $A$. Then its product with the sign function $sgn\cdot p$ is an index. If $\pi$ is an index with coefficients in $A$ then $sgn\cdot\pi$ is a signed index. For a signed index $\tau$, the product $sgn\cdot\tau$ is an index.

If $A=\Z$ (for example, $p=ip$ is the index parity) then the modulus $|p|$ ($|p|_D(v)=|p_D(v)|$, $v\in\V(D)$) of the parity $p$ is an index. The sign $sgn(p)$ of the parity $p$ where $sgn(p)_D(v)=sgn(p_D(v))$, is a signed index with values $-1,0,1$. The signed index $sgn(p)$ can be used as a substitution for the crossing sign in the case of flat knots.
\end{example}

\subsection{Linking invariant}

Let $\tau$ be a signed index on diagrams of a virtual knot $\K$ with coefficients in an abelian group $A$. Denote the value of $\tau$ on the loop crossings of type $l_+$ (see Fig.~\ref{fig:loop_types}) by $\tau^\circ$, and the value of crossings of type $r_+$ by $\tau^\bullet$. Then the crossings of type $l_-$ have the signed index $-\tau^\bullet$ and the crossings of type $r_-$ have the signed index $-\tau^\circ$.

\begin{definition}
The signed index $\tau$ is called \emph{R1-reduced} if $\tau^\circ=\tau^\bullet=0$.
\end{definition}

Note that any parity $p$ is R1-reduced.

\begin{proposition}
Let $\tau$ be an R1-reduced signed index on diagrams of a virtual knot $\K$ with coefficients in an abelian group $A$. Then the value
\begin{equation}\label{eq:signed_index_linking_invariant}
  lk(\tau)_D=\sum_{v\in\V(D)}\tau_D(v),\quad D\in\mathfrak K,
\end{equation}
is invariant under Reidemeister moves
\end{proposition}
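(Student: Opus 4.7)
The plan is to verify invariance move-by-move, since $lk(\tau)_D$ is a finite sum over crossings and each Reidemeister move alters the crossing set in a controlled way.

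First I would handle the first Reidemeister move. An R1 move only adds or removes a single loop crossing $v_0$, whose signed index is one of $\tau^\circ$, $\tau^\bullet$, $-\tau^\circ$, $-\tau^\bullet$ depending on the type (Fig.~\ref{fig:loop_types}); the remaining crossings persist and by (I0) keep their values. Since $\tau$ is assumed R1-reduced, $\tau^\circ = \tau^\bullet = 0$, so $\tau_{D'}(v_0) = 0$ and the sum is unchanged.

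Next I would handle the second Reidemeister move. A decreasing R2 removes two crossings $v_1, v_2$ to which (I2+) applies, giving $\tau_D(v_1) + \tau_D(v_2) = 0$; the remaining crossings again persist with their values preserved by (I0). Hence the contribution of the disappearing pair is zero and $lk(\tau)$ is unchanged. Increasing R2 follows by reversing the argument.

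For the third Reidemeister move, (I0) applies: the three crossings involved correspond bijectively to three crossings of the new diagram with identical signed index values, and all other crossings persist with preserved values. Therefore $lk(\tau)_{D'} = lk(\tau)_D$. No step is really the hard part here; the only subtlety is recognizing that the R1-reduction hypothesis is exactly what is needed to absorb R1 moves, so the proposition is essentially a direct unpacking of the axioms (I0), (I2+) together with the definition of R1-reducedness.
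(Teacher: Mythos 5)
Your proof is correct and follows essentially the same route as the paper: the loop crossing contributes $0$ by R1-reducedness, the pair from an R2 move cancels by (I2+), and all surviving crossings keep their values by (I0). No substantive difference from the paper's argument.
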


\begin{proof}
Let $f\colon D\to D'$ be an increasing first Reidemeister move and $v'_0\in\V(D')$ be the new crossing. Then
\[
lk(\tau)_{D'}=\sum_{v'\in\V(D')}\tau_{D'}(v)=\sum_{v\in\V(D)}\tau_{D'}(f_*(v))+\tau_{D'}(v'_0)=\sum_{v\in\V(D)}\tau_{D}(v)=lk(\tau)_D
\]
where $\tau_{D'}(v'_0)=0$ because $\tau$ is R1-reduced.

Let $f\colon D\to D'$ be an increasing second Reidemeister move and $v'_1, v'_2\in\V(D')$ be the new crossings. Then
\begin{multline*}
lk(\tau)_{D'}=\sum_{v'\in\V(D')}\tau_{D'}(v)=\sum_{v\in\V(D)}\tau_{D'}(f_*(v))+\tau_{D'}(v'_1)+\tau_{D'}(v'_2)=\\
\sum_{v\in\V(D)}\tau_{D}(v)=lk(\tau)_D
\end{multline*}
because $\tau_{D'}(v'_1)=-\tau_{D'}(v'_2)$.

For a third Reidemeister move $f\colon D\to D'$ we have
\[
lk(\tau)_{D'}=\sum_{v'\in\V(D')}\tau_{D'}(v)=\sum_{v\in\V(D)}\tau_{D'}(f_*(v))=\sum_{v\in\V(D)}\tau_{D}(v)=lk(\tau)_D.
\]
\end{proof}

\begin{definition}
For an R1-reduced signed index $\tau$, the invariant $lk(\tau)=lk(\tau)_D\in A$ is called the \emph{linking invariant} of the signed index $\tau$.

It $\pi$ is an R1-reduced index on diagrams the knot $\K$ with coefficients in $A$, we define its \emph{linking invariant} as the linking invariant of its product with the sign function: $lk(\pi)=lk(sgn\cdot\pi)$. Then
\begin{equation}\label{eq:index_linking_invariant}
  lk(\pi)=\sum_{v\in\V(D)}sgn(v)\cdot\pi_D(v)
\end{equation}
for any diagram $D$ of the knot $\K$.
\end{definition}

\begin{example}
Let $D=D_1\cup D_2$ be a diagram of a two-component link $\mathcal L$. Consider the index $\pi$ with coefficients in $\Z$ given by the formula
\[
\pi_D(v)=\left\{\begin{array}{cl}
                            1, & v\mbox{ is a mixed crossing},\\
                            0, & v\mbox{ is a self-crossing}.
                          \end{array}\right.
\]
Then $\pi$ is an R1-reduced index, and its linking invariant
\[
  lk(\pi)=\sum_{v\in\V(D)}sgn(v)\cdot\pi(v)=\sum_{v\mbox{\scriptsize\ is a mixed crossing}}sgn(v)=lk(\mathcal L)
\]
is the linking number of the link $\mathcal L$.
\end{example}

\begin{remark}
Let $\tau$ be a signed index on diagrams of $\K$ with coefficients in $A$.
If $\tau$ is not R1-reduced define its \emph{R1-reduction} $\mathring\tau$ by the formula
\[
\mathring\tau_D(v)=\left\{\begin{array}{cl}
                            \tau_D(v), & \tau_D(v)\ne\pm\tau^\circ\mbox{ or }\pm\tau^\bullet, \\
                            0, & \tau_D(v)=\pm\tau^\circ\mbox{ or }\pm\tau^\bullet.
                          \end{array}\right.
\]
Then $\mathring\tau$ is an R1-reduced signed index with coefficients in $A$. We define the \emph{reduced linking invariant} by the formula
\begin{equation}\label{eq:signed_index_reduced_linking_invariant}
  \mathring{lk}(\tau)=\sum_{v\in\V(D)}\mathring\tau_D(v)=\sum_{v\in\V(D)\colon\tau_D(v)\ne\pm\tau^\circ,\pm\tau^\bullet}\tau_D(v).
\end{equation}

Another way to define a linking invariant for an unreduced signed index $\tau$ is to take the factor-group $\bar A=A/\left<\tau^\circ,\tau^\bullet\right>$ and consider the signed index $\bar\tau$ that is the composition of $\tau$ with the natural homomorphism $A\to\bar A$. By definition, the signed index $\bar\tau$ is R1-reduce, and we can define the linking invariant $\overline{lk}(\tau)=lk(\bar\tau)\in\bar A$.

For an index $\pi$ on diagrams of $\K$ with coefficients in $A$, we define the reduced linking invariants $\mathring{lk}(\pi)\in A$ and $\overline{lk}(\pi)\in\bar A=A/\left<\pi^\circ,\pi^\bullet\right>$  as the invariants of the signed index $sgn\cdot\pi$.
\end{remark}

\begin{example}
Let $\pi$ be an index on diagrams of a virtual knot $\K$ with coefficients in an abelian group $A$. Consider the map $G\colon A\to\Z[A]$, $G(x)=x$, $x\in A$. Then $G(\pi)$ be an index with coefficients in $\Z[A]$. The linking invariant $LK(\pi)=\mathring{lk}(G(\pi))\in\Z[A]$ is called the \emph{odd index polynomial} of the index $\pi$. For a signed index $\tau$, we define the odd index polynomial by  $LK(\tau)=LK(sgn\cdot\tau)$.

For example, if $\pi=W_K$ is the conventional $\Z$-valued index (see~\cite{K2}) then the linking invariant $LK(\pi)\in\Z[\Z]\simeq\Z[t]$ gives the known odd index (writhe~\cite{Cheng}, wriggle~\cite{FK}) polynomial.
\end{example}

\subsection{Inner product of parities}

The linking invariant $lk$ allows to define an invariant inner product on the space of parities.

\begin{definition}\label{def:parity_product}
Let $p_1$ be an oriented parity on diagrams of a virtual knot $\K$ with coefficients in an abelian group $A$, and $p_2$ be an oriented parity on diagrams of $\K$ with coefficients in an abelian group $B$. Define the \emph{inner product} of the parities $p_1$ and $p_2$ as the linking invariant of the index $p_1\otimes p_2$:
\begin{equation}\label{eq:parity_product}
  \left<p_1,p_2\right>=lk(p_1\otimes p_2)=\sum_{v\in\V(D)}sgn(v)\cdot(p_1)_D(v)\otimes(p_2)_D(v)\in A\otimes B.
\end{equation}
Analogously, one can define the inner product of any R1-reduced (signed) index with any (signed) index.
\end{definition}

\begin{remark}
1. The formula~\eqref{eq:parity_product} defines a bilinear map
\[
\mathcal{P}(\mathcal K,A)\times\mathcal{P}(\mathcal K,B)\to A\otimes B
\]
where $\mathcal{P}(\mathcal K,A)$ ($\mathcal{P}(\mathcal K,B)$)  is the abelian group of oriented parities on diagrams of $\K$ with coefficients in $A$ ($B$).

If $A$ is a ring (for example, $A=\Z$) then $\mathcal{P}(\mathcal K,A)$ becomes an $A$-module with the inner product $\left<\cdot,\cdot\right>$.

2. The signature $\sigma(\pi)$ of an index $\pi$ with coefficients in $A$ is equal to the inner product with the index parity $ip$:
\[
\sigma(\pi)=-\sum_{v\in\V(D)}\pi_D(v)\cdot ip_D(v)=-\left<\pi,ip\right>.
\]
\end{remark}

\subsection{Derived parities}

\begin{definition}
Let $p$ be oriented parity on diagrams of a virtual knot $\K$ with coefficients in an abelian group $A$. Then $sgn\cdot p$ is an R1-reduced index, and by Theorem~\ref{thm:index_parity_reduced} it defines the oriented parity $p'=p^{sgn\cdot p}$ with coefficients in the group $A'=A/\left<\sigma(sgn\cdot p)\right>$. We call the parity $p'$ the \emph{derived parity} to the parity $p$.
\end{definition}

We can iterate the definition and get the series of derivations $p',p'',\dots, p^{(n)},\dots$ of the parity $p$ with coefficients in $A',A'',\dots,A^{(n)},\dots$. Note that the coefficient groups are connected with epimorphisms $A\to A'\to A''\to\cdots$.

\begin{remark}
Fix a diagram $D$ of the knot $\mathcal K$. Then for $D$ the values an oriented parity $p$ on diagrams of $\K$ with coefficients in $A$ is determined by the vector $\mathbf p =(p_D(v))_{v\in\V(D)}\in A^{\V(D)}$.

Let $M=(D^l_v\cdot D^-_{v'})_{v,v'\in\V(D)}$ be the intersection matrix (with the elements in $\Z$). By formula~\eqref{eq:index_reduction_parity} the vector of values $\mathbf p'=(p'_D(v))_{v\in\V(D)}\in (A')^{\V(D)}$ of the derived parity is equal to
\[
\mathbf p'=M\mathbf p.
\]
For the higher derivation we have the formula $\mathbf p^{(n)}=M^n\mathbf p\in (A^{(n)})^{\V(D)}$.

The coefficients of the intersection matrix can be calculated with the formulas from~\cite{Turaev}.
\end{remark}

\begin{example}
Let $\K$ be a virtual knot and $p_0=ip$ be the index parity on $\K$ with coefficients in $A_0=\Z$. Denote the derivations of the index parity by $p_n=ip^{(n)}$, $n\ge 0$, and let $A_n$ be the coefficient group of $p_n$. Denote $\sigma_n=\sigma(p_n)\in A_n$ and $LK_n=LK(p_n)\in\Z[A_n]$. Note that $A_{n+1}=A_n/\left<\sigma_n\right>$.

Calculations of the derived parities for virtual knots with number of crossings $\le 4$ demonstrate several patterns.

1. (degeneration) Consider the knot 3.1 (Fig.~\ref{fig:virtual_knots}). The signature of derived parities are equal $\sigma_0=4$, $\sigma_1=0$ and $\sigma_2=1$. Then $A_n=0$ for all $n\ge 3$, and the parities $p_n$, $n\ge 3$, vanish. The odd index polynomials are
\[
LK_0=-t^{-1}+t-t^2,\quad LK_1=-t^{-1},\quad LK_n=0,\ n\ge 2.
\]

Another degeneration example is given by the knot 3.5. For this knot, the signatures are $\sigma_1=8$ and $\sigma_n=0$ for $n\ge 1$. Then $A_n=\Z_8$ for $n\ge 2$. On the other hand, $p_3=0$, so $p_n=0$ for all $n\ge n$. The odd index polynomials are
\[
LK_0=-t^{-2}-t^2,\quad LK_1=-3t^4,\quad LK_2=-2t^4,\quad LK_n=0,\ n\ge 3.
\]

\begin{figure}[h]
\centering\includegraphics[width=0.15\textwidth]{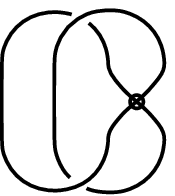}\quad\includegraphics[width=0.15\textwidth]{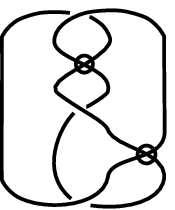}
\quad\includegraphics[width=0.15\textwidth]{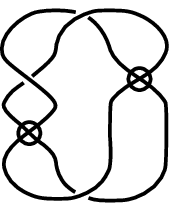}\quad\includegraphics[width=0.15\textwidth]{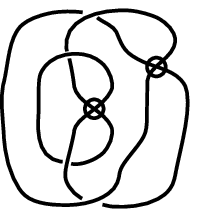}
\quad\includegraphics[width=0.15\textwidth]{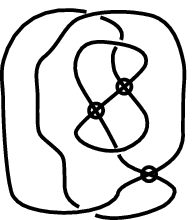}
\caption{Virtual knots 2.1, 3.1, 3.5, 4.1, 4.75}\label{fig:virtual_knots}
\end{figure}

2. (stabilization) Consider the knot 2.1. Then $\sigma_0=2$ and $\sigma_n=0$ for $n\ge 1$. Then $A_n=\Z_2$ for $n\ge 1$. For $n\ge 1$ the derived parities are equal to $p_n=(1,1)$. The odd index polynomials are
\[
LK_0=-t^{-1}-t,\quad LK_n=-2t^{-1},\ n\ge 1.
\]

3. (periodicity) Consider the knot 4.1. For this knot, $\sigma_0=4$ and $\sigma_n=0$ for $n\ge 1$. Then $A_n=\Z_4$, $n\ge 1$. The sequence of derived parities has period $4$: $\mathbf p_{4k}=(1,-1,1,-1)$, $\mathbf p_{4k+1}=(1,1,1,1)$, $\mathbf p_{4k+2}=(-1,1,-1,1)$, $\mathbf p_{4k+3}=(-1,-1,-1,-1)$, $k\ge 0$. (The numeration of the crossings coincides with that in the Gauss code in the Green's table of virtual knots~\cite{Green}.) The odd index polynomials are
\[
LK_{4k}=LK_{4k+2}=-2t^{-1}-2t,\quad LK_{4k+1}=-4t^{-1},\quad LK_{4k+3}=-4t,\ k\ge 0.
\]

4. (growth) Consider the knot 4.75. Then $\sigma_n=0$ for all $n$, hence, $A_n=\Z$, $n\ge 0$. The parity vector $\mathbf p_0$ appears to be the eigenvector of the intersection matrix $M$ with the eigenvalue $2$. Thus, $\mathbf p_n=2^n\mathbf p_0=(2^n,-2^n,-2^n,2^n)$. The odd index polynomials are all zero: $LK_n=0$, $n\ge 0$.

The knot 4.107 gives an example of growth with periodicity. For this knot, $A_n=\Z$ for all $n$, and $\mathbf p_{2k}=(2\cdot 3^k,-2\cdot 3^k,-2\cdot 3^k,2\cdot 3^k)$ and $\mathbf p_{2k+1}=(-2\cdot 3^k,-2\cdot 3^k,-2\cdot 3^k,-2\cdot 3^k)$, $k\ge 0$. The odd index polynomials are
\[
LK_{2k}=0,\quad LK_{2k+1}=2t^{-2\cdot 3^k}-2t^{2\cdot 3^k},\ k\ge 0.
\]
\end{example}

\begin{example}
Consider the knots 2.1 and 4.4 from the table~\cite{Green}. We denote them by $K_1$ and $K_2$. Then $LK_0(K_1)=LK_0(K_2)=-t^{-1}-t$ but $LK_1(K_1)=-2t^{-1}$ and $LK_1(K_2)=0$ are different. Thus, the derived index polynomials can be more sensitive than the conventional one.

On the other hand, if index parity is trivial ($ip\equiv 0$) then the derived parities are also trivial. This situation occurs for almost classical knots.
\end{example}

\section{Open questions}

We conclude the paper with some open questions.

\begin{enumerate}
  \item In Section~\ref{subsect:cycle_biquandle} it was shown that cohomology of biquandles generates parity cycles and parities. Is it true that any parity can be obtained by this construction? In particular, for an knot $K$ describe the cohomology group $H^1(B(K),A)$ of the fundamental biquandle $B(K)$ of $K$.
  \item Describe the colouring monodromy groups $Mon_B(\K)$. Which knots have trivial monodromy? Describe the (quasi)index monodromy groups $Mon(\pi)$. Which indices $\pi$ have trivial monodromy?
  \item Find nontrivial examples of quasi-indices which are not indices. What is the meaning of the conditions~\eqref{eq:index_conditions} on biquandle 1-cocycles?
\end{enumerate}

\end{document}